\newif\ifrs
\ifrs \usepackage{mathrsfs} \fi  
\newif\ifcol
\newtheorem{theorem}{Theorem}[section]
\newtheorem{lemma}[theorem]{Lemma}
\newtheorem{proposition}[theorem]{Proposition}
\newtheorem{remark}[theorem]{Remark}
\newtheorem{example}[theorem]{Example}
\numberwithin{equation}{section}
\newtheorem{rem}[theorem]{Remark}
\newtheorem{theorem*}{Theorem}
\newtheorem{ass*}[theorem*]{Assumption}
\newtheorem{note*}[theorem*]{Note}
\newtheorem{lemma*}[theorem*]{Lemma}
\newtheorem{definition*}[theorem*]{Definition}
\newtheorem{proposition*}[theorem*]{Proposition}
\newtheorem{corollary*}[theorem*]{Corollary}
\newtheorem{remark*}[theorem*]{Remark}
\newtheorem{example*}[theorem*]{Example}
\numberwithin{equation}{section}
\newif\ifcol
\newcommand{\colorr}{\color[rgb]{0.8,0,0}}
\newcommand{\colorn}{\color[rgb]{1,1,1}}
\newcommand{\colr}{\color{black}}
\newcommand{\colorr}{\color{black}}
\newcommand{\colb}{\color[rgb]{0,0,0.8}}
\newcommand{\colorn}{\color{black}}
\newif\ifcol
\newif\ifcol
\newif\ifcol
\newif\ifcol
\newif\ifcol
\def\ol{\overline}
\def\bd{\begin{description}}
\def\ed{\end{description}}
\def\D2{\bbD_{2,\infty-}}
\def\D{{\bf D}}
\def\cala{{\cal A}}
\def\calb{{\cal B}}
\def\cale{{\cal E}}
\def\calf{{\cal F}}
\def\calg{{\cal G}}
\def\calh{{\cal H}}
\def\cali{{\cal I}}
\def\calj{{\cal J}}
\def\caln{{\cal N}}
\def\calt{{\cal T}}
\def\calw{{\cal W}}
\def\calx{{\cal X}}
\def\calz{{\cal Z}}
\def\ds{\displaystyle}
\def\yeq{\>=\>}
\def\sfk{{\sf k}}
\def\sfm{{\sf m}}
\def\sfd{{\sf d}}
\def\sfp{{\sf p}}
\def\sfq{{\sf q}}
\def\sfr{{\sf r}}
\def\y{\vspace*{3mm}\\}
\def\halflineskip{\vspace*{3mm}}
\def\be{\begin{equation}}
\def\ee{\end{equation}}
\def\bea{\begin{eqnarray}}
\def\eea{\end{eqnarray}}
\def\beas{\begin{eqnarray*}}
\def\eeas{\end{eqnarray*}}
\def\bi{\begin{itemize}}
\def\ei{\end{itemize}}
\def\im{\item}
\def\bd{\begin{description}}
\def\ed{\end{description}}
\newcommand{\bbD}{{\mathbb D}}
\newcommand{\bbH}{{\mathbb H}}
\newcommand{\bbR}{{\mathbb R}}
\newcommand{\bbT}{{\mathbb T}}
\newcommand{\bbV}{{\mathbb V}}
\newcommand{\bbZ}{{\mathbb Z}}
\newcommand{\sfa}{{\sf a}}
\newcommand{\sfb}{{\sf b}}
\def\sfr{{\sf r}}
\def\sfp{{\sf p}}
\def\sfd{{\sf d}}
\def\sfm{{\sf m}}
\begin{document}

\title{
	Penalized estimation for non-identifiable models
\footnote{
This work was in part supported by 
Japan Science and Technology Agency CREST  JPMJCR2115; 
Japan Society for the Promotion of Science Grants-in-Aid for Scientific Research 
No.\,17H01702 (Scientific Research);  
and by a Cooperative Research Program of the Institute of Statistical Mathematics. 
}
}
\author[1,2]{Junichiro Yoshida}
\author[1,2]{Nakahiro Yoshida}
\affil[1]{Graduate School of Mathematical Sciences, University of Tokyo
\footnote{Graduate School of Mathematical Sciences, University of Tokyo: 3-8-1 Komaba, Meguro-ku, Tokyo 153-8914, Japan. e-mail: nakahiro@ms.u-tokyo.ac.jp}
}
\affil[2]{CREST, Japan Science and Technology Agency
}
\maketitle
\ \\
{\it Summary} 
{ 
	We derive   asymptotic properties of   penalized estimators  
	 for singular models  for which identifiability may break and  the true parameter values can lie on the boundary of the parameter space.  
	Selection consistency of the estimators is also validated.
	The  problem  that the true values lie on  the boundary is dealt with by  our previous results that are  applicable to singular models, besides, penalized estimation and  non-ergodic statistics. In order to overcome non-identifiability, 
	 we consider a suitable penalty such as the non-convex Bridge and the adaptive Lasso that  stabilizes the asymptotic behavior of the  estimator and shrinks inactive parameters. Then  the estimator converges to one of the most  parsimonious values among all the  true values. 
	 In particular, the oracle property can also be obtained even if  parametric structure of the singular model  is so complex that likelihood ratio tests for model selection are labor intensive to perform.
	Among many potential applications, 
	the examples handled in the paper  are:  (i) the superposition of parametric proportional hazard models and  (ii) a counting process having intensity with multicollinear covariates.
}
\ \\
\ \\
{\it Keywords and phrases} 
{
Quasi-likelihood; Penalized likelihood;   Boundary;  Non-identifiability;  Variable selection; Superposed process; Multicollinearity; Proportional hazard model
}
\ \\
\section{Introduction}
{ The purpose of this article is to obtain  the asymptotic behavior of the penalized estimator  that possesses selection consistency under singular models where identifiability  fails and  the true parameter values may lie on the boundary of the parameter space.  For that, we apply the limit theorem  in Yoshida and Yoshida \cite{yoshida2022quasi},  which is  a generalization of  local asymptotic theory  established by  Ibragimov and Khas'minskii {\colorr \cite{ibragimov1972asymptotic, IbragimovHascprimeminskiui1981}}  and which is valid  {\colr for singular models, besides,  non-ergodic statistics and penalized estimation.}  {\colr The limit theorem, recalled  in Section 2, can overcome  the singularity that the true values lie on the boundary by a more general local approximation of the parameter space  than in previous studies. (See Remark \ref{remarkHT-infty} (i).)}

\begin{en-text}Let us recall the limit theorem in \cite{yoshida2022quasi} briefly. We denote by $\cale^\epsilon =\{\calx, \cala, P_\theta^\epsilon, \theta \in\Theta\}$  a sequence of statistical experiments with $\epsilon \in (0, 1]$ for a parameter space $\Theta \subset \bbR^{\sfp}$. Let $a_T \in GL(\sfp)$ be a positive normalizing factor tending to zero as $T \to \infty$. For a $\theta^* \in \Theta$, define a random field $\bbZ_T$ by 
\bea\label{Z_epsilon}
\bbZ_T (u) = \frac{dP^\epsilon_{\theta^*+\varphi(\epsilon)u}}{dP^\epsilon_{\theta^*}}(X^\epsilon)\qquad(u \in \bbR^m).
\eea
Suppose that $Z_\epsilon$ can be extended on a $\hat C(\bbR^m)$-valued random variable\footnote{$\hat C(\bbR^m)$ is the space of  continuous functions on $\bbR^m$ that tends to zero at the infinity} and that the extended $Z_\epsilon$ converges to $Z$ in $\hat{C}(\bbR^m)$, where $Z$ is some $\hat C (\bbR^m)$-valued random variable. Then we have
\bea\label{Ibragimov}
\hat u_\epsilon =\underset{ u}{\rm argmax\,} Z_\epsilon(u) \overset{d}{\to}   \hat u = \underset{ u\in \bbR^m}{\rm argmax\,} Z(u).
\eea
When the experiment $\cale^\epsilon$ is locally asymptotically normal (LAN), $Z(u)$ takes the form of $Z(u)=\exp\big(\Delta(\theta^*)-2^{-1}I(\theta^*)[u^{\otimes2}]\big)$, where $\Delta(\theta^*) \sim N_m\big(0, I(\theta^*)\big)$ and $I(\theta^*)$ is the Fisher information matrix, and we obviously obtain  $\hat u= I(\theta^*)^{-1}\Delta(\theta^*) \sim N_m\big(0, I(\theta^*)^{-1}\big)$. In this paper, we extend this theory to a more general one  
that  can be  applied even when $\theta^*$ may lie on the boundary of $\Theta$. Moreover, in our extended {\colorr theory},   $Z_\epsilon$  is not necessarily defined as (\ref{Z_epsilon}) and not necessarily asymptotically quadratic. Therefore, as explained below,  penalized estimation can be handled as well as quasi-maximum likelihood estimation. 
Besides, our results can apply to the so-called non-ergodic statistics, where the Fisher information is random in the limit,  including the regular experiment that is locally asymptotically mixed normal (LAMN).

When the true parameter value $\theta^*$  lies on the boundary of $\Theta$,  a cone set locally approximating  $\Theta$ at $\theta^*$ is usually used. This set was 
introduced by Chernoff  \cite{chernoff1954distribution}, and with it, Self and Liang \cite{self1987asymptotic} derived the limit distribution of  the maximum likelihood estimation (MLE)   when $\theta^*$ is on the boundary.  Also, using a cone set that is a generalization of Chernoff's cone set, Andrews \cite{andrews1999estimation} derived the asymptotic distribution of the quasi-maximum likelihood estimator (QMLE)  when $\theta^* $ is on the boundary. These cone sets are denoted by $\Lambda$.
However, if  the rate of  convergence of a given estimator  to $\theta^*$ becomes more  complex, then  $\Theta$  may not be locally approximated by any cone set. In this paper, the asymptotic behavior of the estimator  is derived even in such complex cases  by generalizing the local approximation method for the parameter space.  Instead of $\Lambda$, we denote by $U$ the general set locally approximating $\Theta$ at $\theta^*$, which is not necessarily a cone. (Examples of $U$ are listed in Section 2.3.)  Then our theorem (Theorem \ref{thm1}) shows that (\ref{Ibragimov}) is generalized as
\bea\label{Ibragimovgeneral}
\hat u_\epsilon =\underset{ u}{\rm argmax\,} Z_\epsilon(u) \overset{d}{\to}   \hat u = \underset{ u\in  U}{\rm argmax\,} Z(u).
\eea
\end{en-text}

For regular statistical models, there are many studies on the penalized maximum likelihood estimator (PMLE). 
 Frank and Friedman \cite{frank1993statistical} introduced the Bridge-type penalty 
\beas
p_\lambda (\theta) \yeq 
\lambda \sum_{i=1}^{\sfp} |\theta_i|^q
\qquad\big(\theta = (\theta_1, ..., \theta_\sfp) \in \Theta\big),
\eeas
where $q > 0$  and $\lambda > 0$ are tuning parameters.  In particular, this penalty  gives  the Lasso estimator   when $q=1$ (Tibshirani
\cite{tibshirani1996regression}).   The limit distribution of the PMLE in the Bridge case is obtained by Knight and Fu \cite{fu2000asymptotics}, while  the oracle property of the adaptive Lasso  is shown by Zou \cite{zou2006adaptive}. 
In this article, we treat  penalties that possess model selection consistency, including the non-convex Bridge with $q<1$ and the adaptive Lasso.
{Also, many authors have studied  the penalized quasi-maximum likelihood estimator (PQMLE) in regular  models:   De Gregorio and Iacus \cite{de2012adaptive},  Ga\"{i}ffas and Matulewicz \cite{gaiffas2019sparse}, Masuda and Shimizu \cite{masuda2017moment}, Kinoshita and Yoshida \cite{kinoshita2019penalized} and  Umezu et al. \cite{umezu2019aic}, just to mention a few. 

	{Let us explain the principle of our penalized estimation  that  overcomes  non-identifiability. Let  $\theta \in \Theta$ be the unknown parameter of interest whose true value is not necessarily uniquely determined.  
Denote by $\Theta^*$ the set consisting of the true values of $\theta \in \Theta$.
The target value $\theta^* $ of our penalized estimation is 
one of the true values  which is sufficiently parsimonious in that
\bea\label{parsimonious} \Theta^* \cap \big\{\theta=(\theta_1, ..., \theta_\sfp) \in \Theta ;~\text{for every $j=1, ..., \sfp$, }\,  \theta^*_j=0 \implies \theta_j =0 \big\} \yeq \{\theta^*\}.
\eea
Then by adding  to the quasi-log likelihood function a penalty term  which is uniquely minimized  on $\Theta^*$ by $\theta^*$,  the PQMLE converges to $\theta^*$ in probability, and we can derive its  limit distribution  and selection consistency.  In particular, its oracle property can also be obtained.

{One of  the  simplest example is  the linear regression model
	\bea\label{linearbasic}
	Y \yeq \theta_1X_1 + ... + \theta_\sfp X_\sfp + \epsilon,
	\eea
	where the explanatory variables $X =(X_1, ..., X_\sfp)$ are perfectly multicollinear in that $A X^\prime=0$\footnote{\colr $B^\prime$ denotes the transpose of a matrix $B$.}  
	for some $(\sfp -\sfr) \times \sfp$ matrix $A$. Here $\sfr$ denotes the dimension of the linear space spanned by $X$. Then the true values set $\Theta^*$ is determined as $\{\theta=(\theta_1, ..., \theta_\sfp) \in \Theta ; A\theta^\prime =A\theta_0^\prime\}$, where $\theta_0$ is one of the true values. Then considering   the Bridge-type penalty  with $q<1$  
	that is  uniquely   minimized on $\Theta^*$  by  some element  $\theta^*\in \Theta^*$ satisfying (\ref{parsimonious}), we are expected to obtain the penalized least squares estimator which converges to $\theta^*$ in probability, and can derive its limit distribution and selection consistency as well as its oracle property. {The idea of estimating the unique minimizer $\theta^*$ of the penalty  is  mentioned briefly in the introduction of Knight and Fu \cite{fu2000asymptotics}. 
		Also, this estimation method is certainly valid for time-series data such as a counting process  whose intensity is expressed as the following model. 
		\bea\label{introadditive}
		\lambda_t(\alpha_1, ..., \alpha_\sfa) \yeq \alpha_1X_t^1+\cdots \alpha_\sfa X_t^\sfa\qquad(t\geq 0), 
		\eea
		where $\alpha_1, ..., \alpha_\sfa$ are the non-negative unknown  parameters, and $\{X_t^j\}_{t\geq 0}$ $(j=1, ..., \sfa)$ are non-negative observable  processes that may be multicollinear. 
		{\colr 
			 Without multicollinearity, the additive intensity model as (\ref{introadditive})  is found in the literature of survival analysis (Aalen \cite{aalen1978nonparametric}).
			Lindsey \cite{lindsey1995fitting} shows that   the additive model  gives a better fit than the corresponding multiplicative intensity model in the parametric case, using event or life history data.}
		The additive model  is also treated by Chornoboy et.al. \cite{chornoboy1988maximum} in the context of neural network. 
		Due to the natural boundary constraint $\alpha_j \geq 0$, this example  is somewhat more difficult than usual linear regression models without  the constraint such as (\ref{linearbasic}). Therefore, prior to those models,  we treat (\ref{introadditive}) in Section \ref{pointprocesswithmulticolinear}, but  under the multicollinearity of
		$X^1, ..., X^\sfa$. 
	}
}

Moreover,  this approach using penalized estimation  is valid even when a nuisance  parameter $\tau \in \calt$  exists  with $\theta$. 
Denote by $\Xi^*$ the set of the true values of $(\theta, \tau) \in \Xi= \Theta \times\calt$, and  by $\Theta^*$ the projection  of $\Xi^*$ as $\Theta^*=\big\{\theta \in \Theta ; (\theta, \tau)\in \Xi^*\text{ for some $\tau \in \calt$}\big\}$. 
Then if we impose a suitable penalty  on $\theta$  which  is uniquely minimized  on $\Theta^*$ by some parsimonious element $\theta^* \in \Theta^*$ satisfying (\ref{parsimonious}),  then
the PQMLE converges to $\theta^*$ in probability, and  its  limit distribution and selection consistency can be obtained as well as its oracle property.

An example {\colr of when a nuisance parameter exists} is estimation of the intensity  of a counting process which is expressed as the  superposition of some  parametric proportional hazards models. {\colr That is,}  the intensity $\lambda_t$ is parametrized as 
\bea\label{superpositiongeneral}
\lambda_t \yeq g+ \alpha_1 e^{\beta_1 \cdot X_t^1}+\cdots  +\alpha_\sfa e^{\beta_\sfa \cdot X_t^\sfa} \qquad(t\geq 0),
\eea
where $g \geq 0, \alpha_j\geq 0, \beta_j \in \bbR^{\sfk_j}$  $(j=1, ..., \sfa)$ are the unknown  parameters, and $X_t^{j}$  $(j=1, ..., \sfa)$  are $\bbR^{\sfk_j}$-valued observed predictable processes. 
More generally, we may say that 
the model (\ref{superpositiongeneral}) is the superposition of counting processes whose intensity {are} given by the log-linear models $\log\lambda_t^{(j)}={\beta_j\cdot X_t^j}$.
{\colr The model (\ref{superpositiongeneral}) can be a generalization of  superposed non-homogeneous Poisson processes such as  Mun et al. \cite{mun2013superposed}, Pulcini \cite{pulcini2001modeling} and Xu et al. \cite{xu2018superposed}.  They treat
	\bea
	\lambda_t &=& \alpha_1 e^{\beta_1t} + \alpha_2 e^{-\beta_2t}
	\label{Munlambda}
	\\
	\lambda_t &=& \frac{\beta_1}{\alpha_1}\bigg(\frac{t}{\alpha_1}\bigg)^{\beta_1-1}+\frac{\beta_2}{\alpha_2}\bigg(\frac{t}{\alpha_2}\bigg)^{\beta_2-1} 
	\label{Pulcinilambda}
	\\
	\lambda_t &=& \lambda_0 + \frac{A_1}{\sqrt{2\pi} \omega t}\exp\bigg(\frac{-(\log{\frac{t}{t_1}})^2}{2\omega^2}\bigg) + A_2\exp{\bigg(\frac{t-t_2}{A_3}\bigg)}, 
	\label{Xulamba}
	\eea
	respectively, where $\alpha_1, \alpha_2, \beta_1, \beta_2$ in (\ref{Munlambda}) and (\ref{Pulcinilambda}), and  $\lambda_0, A_1, \omega, t_1, A_2, A_3, t_2$ in (\ref{Xulamba}) denote the unknown parameters.
When treating  the hazard function for mixture model as defined in Zou et al. \cite{zou2016application}, 
 one would divide the right-hand side of (\ref{superpositiongeneral})    by the survival function.} 
{\colr For a variety of parametric regression models  including parametric proportional hazard models without superposition, see Kalbfleisch and Prentice \cite{kalbfleisch2011statistical} and Lawless \cite{lawless2011statistical}.}

Just to convey the idea of our penalized estimation {\colr for the non-identifiable model (\ref{superpositiongeneral})}, we here consider the simplest model as
\beas
\lambda_t \yeq g + \alpha e^{\beta X_t} \qquad(t\geq 0),
\eeas
where $(g, \alpha, \beta) \in [0, M]^3 =:\Xi$ denotes the unknown  parameter, $M>0$ is some constant,  and $X_t$ is an observed one-dimensional predictable process.  
Let the true intensity $\lambda^*$ be a constant $g^*$.
Then  the true values set $\Xi^*$ is determined as $\{(g, \alpha,\beta) \in \Xi ;  (g, \alpha)=(g^*, 0)~~or~~(g+\alpha, \beta)=(g^*, 0)\}$. 
Let us consider $\beta$ as a nuisance, and therefore denote $(g, \alpha)$ and $\beta $ by $\theta$ and $\tau$, respectively.
Then the projection $\Theta^*$  is determined as  $\Theta^*=\{\theta=(g, \alpha) \in [0, M]^2; g+\alpha=g^*\}$, and $\theta^*: =(g^*, 0)$ is parsimonious in the sense of  (\ref{parsimonious}).   
With  some  non-convex Bridge-type penalty that is  uniquely   minimized on $\Theta^*$ by   $\theta^*$, we obtain an estimator which converges to $\theta^*$ in probability, and can derive its limit distribution and selection consistency as well as {its oracle property}. 
}

{There are still many other applications of our penalized estimation such as parameter estimation of mixture distributions. In such examples, however, the assumptions in this paper may not seem easy to verify because of the complexity of their parametric structure. Therefore, we consider to provide much more simple assumptions,  using Hironaka's theorem, resolution of singularities \cite{atiyah1970resolution, hironaka1964resolution}, and to apply our penalized estimation comprehensively to such complex examples  including mixture distributions although those are to be discussed in the subsequent paper due to page limit. For these reasons,    the two examples (\ref{introadditive}) and (\ref{superpositiongeneral}) whose parametric structure has   a relatively simple explicit form  are treated in this article  without using Hironaka's theorem.  The algebraic approach with resolution of singularities to non-identifiable models  has been studied by Watanabe \cite{watanabe2009algebraic} in the context of Bayesian estimation rather than penalized estimation. }

{The main advantage of this penalized estimation in non-identifiable models  is that even if  the true value sets $\Xi^*$ is extremely complex,  it possesses the oracle property under suitable tuning parameters and therefore the asymptotic   distribution of the estimator depends only on $\theta^*$ but on the nuisance $\tau \in \calt$. 
In non-identifiable models, (quasi-)likelihood ratio tests are sometimes labor intensive to perform because of the complexity of their asymptotic behavior. 
{\colr For a review of the asymptotics of the likelihood ratio in non-regular models, see  Brazzale and Mameli \cite{brazzale2022likelihood}. As mentioned in the review,} 
Liu and Shao \cite{liu2003asymptotics} prove that when identifiability fails under the null hypothesis, 
the asymptotic null distribution of the likelihood ratio converges to the distribution of the supremum of  a squared left-truncated centered Gaussian process. Here the derivation of the set over which the supremum is to be taken requires  working out on a case by case basis, and 
simulation for approximation of the asymptotic null distribution may also be needed.
Given those tasks in testing,  penalized estimation  could be  a more handy way to estimate  the true model. 
Also, even if we perform tests, testing   with a small number of models pre-estimated by  penalized estimation with different tuning parameters might be easier than testing with a large number of  less well-founded models. 

The article is organized as follows. In Section 2,  we recall  the theorem described in Yoshida and Yoshida \cite{yoshida2022quasi}. The main result for the PQMLE is stated in Section \ref{PQMLE},  and  some sufficient condition  for its selection consistency is given in Section \ref{selection}.
(Their proofs is given in Section \ref{proofs}.)   Applications are depicted in Sections \ref{countingprocessgab} and \ref{pointprocesswithmulticolinear}.  (Their proofs are given in Section \ref{proof of points}.) 
}

{\colorr
\section{Preliminary}
We recall Theorem 2.1 in Yoshida and Yoshida \cite{yoshida2022quasi}.
Denote by $\Xi = \Theta \times \mathcal{T}$ the unknown parameter space, 
where $\Theta$ is a measurable subset of  ${ \mathbb{R}^\sfp}$, and $ \mathcal{T}$ is a measurable subset of $\mathbb{R}^{\sfq}$. 
We estimate the true value {(or one of the true values)} $\theta^*$ of the unknown parameter $\theta\in\Theta$, while $\tau\in \calt$ is treated as a nuisance parameter.  
Given a probability space $(\Omega, \mathcal{F}, P)$ specifying the distribution of the data, the statistical inference will be carried out based on a continuous random field $\mathbb{H}_T : \Omega \times \Xi \rightarrow \mathbb{R}$ for 
$T\in\bbT\subset\bbR$ satisfying $\sup\bbT=\infty$,  where a continuous random field means that for each $\omega\in \Omega$, $\bbH_T(\omega)$ is continuous on $\Xi$.
Examples of $\bbH_T$ are (quasi-)log likelihood functions and penalized (quasi-)log likelihood functions.

For each $T\in\bbT$, take  an arbitrary $\calt$-valued random variable $\hat\tau_T$, and suppose that   we can take  a $\Theta$-valued random variable $\hat\theta_T$  that   asymptotically maximizes  $\bbH_T(\theta, \hat\tau_T)$ on $\Theta$.  That is, $\hat \theta_T$ satisfies
\beas
P\bigg[\bbH_T(\hat \theta_T, \hat\tau_T) = \sup_{\theta \in \Theta}\bbH_T(\theta, \hat\tau_T)\bigg] \to 1 \quad(T \to \infty).
\eeas
A common example of $(\hat\theta_T, \hat\tau_T)$ is the joint maximizer. Note that all the assumptions [{\bf A1}]-[{\bf A5}] described below do not depend on   $\hat\tau_T$ nor  on how $\hat\theta_T$ is taken and that the result below shows that the asymptotic behavior of $\hat{\theta}_T$ is the same for different sequences $\hat{\tau}_T$.  



Define $U_T$ by 
\begin{eqnarray}\label{U_T}
U_T \yeq a_T^{-1}\big(\Theta -\theta^*\big)\yeq \{u\in \mathbb{R}^\sfp ; \theta^* + a_T u \in \Theta\} 
\end{eqnarray}
for a deterministic sequence $a_T$ in $GL(\sfp)$ with $\lim_{T\to\infty}|a_T| \rightarrow 0$, where  for any real matrix $A$,    denote by $|A|$ the norm $\big\{{\rm Tr}(AA^\prime)\}^{\frac{1}{2}}$.
We mimic the local asymptotic theory to define the random field $\mathbb{Z}_T : \Omega \times U_T \times\calt  \rightarrow \mathbb{R} $ by 
\begin{eqnarray}\label{Z_T}
\mathbb{Z}_T(u, \tau) &=& 
\exp\bigg(\mathbb{H}_T(\theta^* + a_Tu, \tau) - \mathbb{H}_T(\theta^*, \tau)\bigg)
\qquad
(u\in U_T).
\end{eqnarray} 
Let $C(\bbR^\sfp)$ denote the set  of all  continuous functions  defined on $\bbR^\sfp$. We give it the metric topology induced by a metric $d_\infty$ defined as
\beas
d_\infty(f, g)=\sum_{n=1}^\infty2^{-n}\bigg(1\wedge\max_{|x|\leq n}|f(x)-g(x)|\bigg)\qquad\big(f,g \in C(\bbR^\sfp)\big).
\eeas
For each $T\in\calt$, let $\mathbb{V}_T$ be a $C(\mathbb{R}^\sfp)$-valued random variable.
Also, let $\mathbb{Z}$ be a $C(\mathbb{R}^\sfp)$-valued random variable which will be considered as the limit of $\mathbb{Z}_T$. Let $\calg$ be a sub-$\sigma$-algebra of $\calf$.

For a topological space $S$,  a sequence of  $S$-valued random variables $Y_T$ $(T \in \bbT)$ and  a 
$S$-valued random variable $Y$, we say that $Y_T$ converges stably with limit $Y$ and write as $Y_T {\to} ^{d_s(\calg)}Y$ if and only if for any  bounded continuous function  $f$ defined on $S$ and for any bounded $\calg$-measurable random variable $Z$,
$
E\big[f(Y_T)Z\big] \to  E\big[f(Y)Z\big]
$ as $T\to\infty$. 
Also, for $A\subset \mathbb{R}^\sfp$, $\delta >0$, and $R>0$, we define  $A^{\delta}$ and $A(R)$ as
\begin{eqnarray*}  
	A^{\delta} = \big\{ x\in\mathbb{R}^\sfp ; \inf_{a \in A}|x-a| < \delta \big\} 
	&\text{and}&
	\ A(R)=A\cap{\overline{B_R}}, 
\end{eqnarray*}
respectively, where $B_R = \{x \in \mathbb{R}^\sfp ; |x|< R\}$ and  $\overline {B}$ denotes the closure of $B$ for a subset $B$.
Then we  define $U \subset \mathbb{R}^\sfp$ by \begin{eqnarray}\label{U}
U = \bigcap_{\delta >0} \bigcup_{N=1}^{\infty} \bigcap_{T\geq N} 	{U_T}^{\delta} .\end{eqnarray}
%
Consider the following conditions.
\bd
\im[{\bf[A1]}]
$\ds \varlimsup_{R \rightarrow \infty} \varlimsup_{T\rightarrow \infty}
P\bigg[\sup_{U_T \times \mathcal{T},  |u| \geq R} \mathbb{Z}_T(u, \tau)  \geq 1\bigg] \yeq0$.
\im[{\bf[A2]}] For every $R>0$, as $T\to\infty$,
\begin{eqnarray}
\sup_{U_T(R) \times \mathcal{T}}|\mathbb{Z}_T(u, \tau) - \mathbb{V}_T(u)| \overset{P}{\rightarrow} 0, \nonumber\\ 
\mathbb{V}_{T} \overset{d_s(\calg)}{\rightarrow} \mathbb{Z}
\qquad \text{in } C(\overline{B_R}) .\label{2110240341}
\end{eqnarray}
\ed
More precisely, the convergence (\ref{2110240341}) means 
$\bbV_T|_{\sf C}\overset{d_s(\calg)}{\to}\bbZ|_{\sf C}$ for ${\sf C}=C(\overline{B_R} )$. \halflineskip

\bd
\im[{\bf[A3]}] $\ds U \supset \bigcap_{N=1}^{\infty} \overline{\bigcup_{T\geq N} U_T} $.

\im[{\bf[A4]}] 
There exists a $U$-valued random variable $\hat{u}$ such that with probability $1$,  
\begin{eqnarray*}{\mathbb{Z}(\hat{u})} = \sup_{U }{\mathbb{Z}(u)}
\end{eqnarray*}
and such that with probability 1, for all $u \in U$ with $u \neq \hat{u}$,
\begin{eqnarray*}  {\mathbb{Z}(u)} < {\mathbb{Z}(\hat{u})}.
\end{eqnarray*}

\im[{\bf[A5]}] There exist some $ T_0 \in \bbT$ and a sequence of  $U$-valued random variables $\{\hat{v}_T\}_{T\geq T_0, T\in \bbT}$ such that with probability $1$,  
\begin{eqnarray*}{\mathbb{V}_T(\hat{v}_T)} = \sup_{U}{\mathbb{V}_T(u)}
\end{eqnarray*}
and such that $\{\hat v_T\}_{T\geq T_0, T\in\bbT}$ is tight.

\ed
Define $\hat{u}_T$ by $\hat{u}_T= a_T^{-1}(\hat{\theta}_T - \theta^*)$. 

\begin{theorem}[Yoshida and Yoshida \cite{yoshida2022quasi}] \label{thm1} 
	Under {\rm [{\bf A1}]-[{\bf A4}]},
	\begin{eqnarray}\label{hatu}
	\hat{u}_T \overset{d_s(\calg)}{\rightarrow} \hat{u}.
	\end{eqnarray} 
	Moreover, if {\rm [{\bf A5}]} also holds, then
	\begin{eqnarray}
	\hat{u}_T -\hat{v}_T=o_P(1).\label{hatu-hatv}
	\end{eqnarray} 
\end{theorem}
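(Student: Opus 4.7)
The plan is to proceed in the spirit of Ibragimov--Has'minskii, replacing the classical asymptotic quadraticity with the generalized limit $\bbZ$ and handling the mismatch between $U_T$ and $U$ via [\textbf{A3}]. The key observation is that on the high-probability event $\Omega_T:=\{\bbH_T(\hat\theta_T,\hat\tau_T)=\sup_\theta\bbH_T(\theta,\hat\tau_T)\}$, the random variable $\hat u_T$ is a maximizer of $u\mapsto\bbZ_T(u,\hat\tau_T)$ on $U_T$, and $\bbZ_T(0,\hat\tau_T)=1$ forces $\bbZ_T(\hat u_T,\hat\tau_T)\ge1$.

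First I would establish tightness of $\{\hat u_T\}$. On $\Omega_T$, $|\hat u_T|\ge R$ implies $\sup_{U_T\times\calt,\ |u|\ge R}\bbZ_T(u,\tau)\ge 1$, so [\textbf{A1}] gives $\varlimsup_{R\to\infty}\varlimsup_T P(|\hat u_T|\ge R)=0$. Second, on any ball $\overline{B_R}$, [\textbf{A2}] supplies a uniform-in-$\tau$ approximation $\sup_{U_T(R)\times\calt}|\bbZ_T(u,\tau)-\bbV_T(u)|\xrightarrow{P}0$, so that $\hat u_T$ (when it lies in $\overline{B_R}$) is, up to $o_P(1)$ in the supremum distance, an approximate maximizer of $\bbV_T$ over $U_T(R)$.

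Third, I would apply an argmax continuous mapping argument for stable convergence. Fix a subsequence along which $\hat u_T\to^{d_s(\calg)}\hat u^\infty$ for some limit (possible by tightness and Prokhorov). Using Skorokhod-type representation on an extension of the probability space for the joint stable convergence of $(\bbV_T,\hat u_T)$, I may assume $\bbV_T\to\bbZ$ a.s.\ in $C(\overline{B_R})$ for every $R$. Any limit point of $\hat u_T$ must lie in $U$: if $u_{T_k}\to u^\infty$ with $u_{T_k}\in U_{T_k}$, then $u^\infty\in\bigcap_N\overline{\bigcup_{T\ge N}U_T}\subset U$ by [\textbf{A3}]. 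By the uniform approximation and a.s.\ convergence of $\bbV_T$, taking $R$ larger than $|\hat u^\infty|$ and than $|\hat u|$ (recall $\hat u\in U$ a.s.\ by [\textbf{A4}]), and using $\bbZ_T(\hat u_T,\hat\tau_T)\ge\bbZ_T(u,\hat\tau_T)$ for $u\in U_T(R)$, we conclude $\bbZ(u^\infty)\ge\bbZ(u)$ for every $u\in U\cap\overline{B_R}$, and then on all of $U$ by increasing $R$ together with [\textbf{A1}]. The strict uniqueness clause of [\textbf{A4}] then forces $u^\infty=\hat u$ a.s., which identifies every subsequential limit. Combined with tightness, this yields $\hat u_T\to^{d_s(\calg)}\hat u$, proving (\ref{hatu}).

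For (\ref{hatu-hatv}), assume in addition [\textbf{A5}]. Both $\hat u_T$ and $\hat v_T$ are tight, $\hat v_T\in U$, $\hat u_T\in U_T$, and they are (approximate) maximizers of functions that coincide up to $o_P(1)$ uniformly on $U_T(R)\cup (U\cap\overline{B_R})$ by [\textbf{A2}]. For any subsequence, extract a further subsequence along which $(\hat u_T,\hat v_T,\bbV_T)$ converges jointly; the previous argument identifies both $\hat u_T$ and $\hat v_T$ with the unique maximizer $\hat u$ of $\bbZ$ on $U$, so $\hat u_T-\hat v_T\to 0$ in distribution, hence in probability since the limit is deterministic. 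The main obstacle in the whole argument is the mismatch between the domains $U_T$ and $U$: one must show that the approximate maximizer of $\bbZ_T$ over $U_T$ behaves asymptotically as if it were a maximizer of $\bbZ$ over $U$, which is exactly where [\textbf{A3}] together with the uniform-in-$\tau$ closeness in [\textbf{A2}] are indispensable.
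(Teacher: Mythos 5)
The paper does not prove Theorem \ref{thm1}; it is recalled from Yoshida and Yoshida \cite{yoshida2022quasi}, so there is no internal proof to compare against. Your outline is the expected Ibragimov--Has'minskii-type argmax argument adapted to stable convergence and to a moving local domain, and the key steps are in the right place: [{\bf A1}] gives tightness of $\hat u_T$ via $\bbZ_T(0,\hat\tau_T)=1$ on the high-probability event where $\hat\theta_T$ is an exact maximizer; [{\bf A2}] transfers approximate maximality from $\bbZ_T(\cdot,\hat\tau_T)$ to $\bbV_T$ uniformly on $U_T(R)$; [{\bf A3}] (together with the observation that $U$ as defined in \eqref{U} is automatically closed) forces subsequential limits of $\hat u_T$ to lie in $U$; and [{\bf A4}] identifies the limit as $\hat u$ by a.s.\ uniqueness, with [{\bf A5}] supplying tightness of $\hat v_T$ and the same identification for \eqref{hatu-hatv}.

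Two points would need to be made explicit in a full write-up, though neither is a gap in the idea. First, to pass from maximality of $\hat u_T$ over $U_T$ to $\bbZ(u^\infty)\ge\bbZ(u)$ for a fixed $u\in U$, you must produce a sequence $u_T\in U_T$ with $u_T\to u$; this follows from the definition of $U$ (for each $\delta>0$ and $T$ large, $u\in U_T^\delta$), but it is the crux of why the comparison extends from $U_T$ to $U$ and should be stated. Second, the Skorokhod-type representation has to be constructed so that it preserves both the $\calg$-conditional structure underlying stable convergence and the joint law of $(\bbV_T,\hat u_T,\hat v_T)$ together with the almost-sure maximization properties of $\hat u_T$ and $\hat v_T$; this is standard but not automatic, and specifying the version used (e.g.\ an extension-space representation compatible with $d_s(\calg)$-convergence) would tighten the argument.
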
 

 \begin{remark}\label{remarkHT-infty}
	{\rm \bd
		\im[(i)] Yoshida and Yoshida \cite{yoshida2022quasi} shows that $U$ is a generalization of the cone sets introduced by Chernoff \cite{chernoff1954distribution} and Andrews \cite{andrews1999estimation}. Some sufficient conditions for [{\bf A3}] and a way to derive the explicit form of $U$ is discussed in \cite{yoshida2022quasi}. 
	\im[(ii)] Consider a situation where     $\bbH_T(\theta, \tau) = - \infty$  for some $(\omega, \theta, \tau) \in \Omega \times \Xi$ while $\bbH_T(\theta^*, \tau) \in \bbR$ for any $(\omega, \tau) \in \Omega \times\calt$. Even in this case, if  $\bbZ_T$ defined as (\ref{Z_T}) is still continuous for each $\omega \in \Omega$ by interpreting $\exp(-\infty)$ as $0$,   then Theorem \ref{thm1} still holds since its proof  depends on the properties of   $\bbZ_T$ including its continuity but not on those of $\bbH_T$. 
	
\ed}
	\end{remark}}

}

	\section{Penalized quasi-maximum likelihood estimation  under
		\\ non-identifiability}
	\subsection{Settings}\label{settings}
	Denote by $\Theta\subset \bbR^\sfp$ the unknown parameter space. As explained in Introduction, in this article, we treat  models where identifiability may fail.
	Thus,  the true value of $\theta \in \Theta$ is not necessarily uniquely determined.  We  denote by $\Theta^*$ the set consisting of  all the true values of $\theta \in \Theta$. 
	Suppose that some $\theta^*\in \Theta^*$ is sufficiently parsimonious in the sense of (\ref{parsimonious}).
		Then  we estimate  $\theta^*$ based on  the model  $\Theta$ by adding    a suitable penalty term whose minimizer is  $\theta^*$ on $\Theta^*$.  (These assumptions including the definitions of $\Theta^*$ 
		are more precisely described  in Section \ref{PQMLE}.)  
	More generally, in what follows,  we also consider a nuisance parameter $\tau$ whose parameter space is denoted by $\calt$.    Then $(\theta, \tau) \in \Xi :=\Theta \times \calt$ are the unknown parameters.   Note that when actually estimating, we do not need  to know which of the unknown parameters corresponds to $\theta$ and which to $\tau$.
		
	Suppose that   $\Theta$ and $\calt$ are compact in $\bbR^\sfp$ and $\bbR^\sfq$, respectively.
	Let $(\Omega, \mathcal{F}, P)$  be a given probability space specifying the distribution of the data.
	Let $\bbT$ be a subset of $\bbR$ satisfying $\sup\bbT=\infty$. We consider the  penalty term
	\beas
	\sum_{j\in \calj}\xi_{j, T} p_j(\theta_j)+s_T(\tau) \qquad\big((\theta_1, ..., \theta_\sfp) \in \Theta, \tau \in \calt\big),
	\eeas 
	where  $s_T : \Omega\times \bbR^\sfq \to [0, \infty)$ is a continuous random field depending on $T \in \bbT$,  $\calj$ is a subset of $\{1, ..., \sfp\}$, $\xi_{j, T} : \Omega \to [0, \infty)$ $(j \in \calj)$ are random variables depending on $T\in \bbT$,
	and $p_j : \bbR \to [0, \infty)$ $(j \in \calj)$ are deterministic continuous functions satisfying the following conditions:
	\bd
	\im[(i)]  $p_{j}\big|_{\bbR\setminus\{0\}}$ is of class $C^1$
	
	\im[(ii)] For any  $x \in \bbR$, $p_{j}(x)=0$ if and only if $x=0$.

	\im[(iii)] There exist some  positive constants $\delta_0$ and $q_j\leq 1$   such that
	\beas
	p_{j}(x) =|x|^{q_j} \qquad\big(x \in [-\delta_0, \delta_0]\big).
	\eeas
	
	\ed
	We consider penalized quasi-maximum likelihood estimation based on the random field $\bbH_T$ given by 
	\begin{eqnarray*}
		\bbH_T(\theta, \tau)=\calh_T(\theta, \tau)-\sum_{j\in \calj}\xi_{j, T} p_j(\theta_j)-s_T(\tau) \qquad\big((\theta, \tau)\in \Xi\big),
	\end{eqnarray*}
	where 
	{for each $T\in\bbT$,  $\calh_T: \Omega\times \Xi\rightarrow \bbR \cup \{- \infty\}$ is  some  random field satisfying that 
  $\calh_T(\omega, \theta^*, \tau)$ is $\bbR$-valued for every $(\omega, \tau) \in \Omega \times \calt$ and that
$\exp\big\{\calh_T(\theta, \tau)- \calh_T(\theta^*, \tau)\big\}$   is    a continuous random field  by interpreting $\exp(-\infty)$ as $0$.
 Note that  $\bbZ_T$ defined as (\ref{Z_T}) becomes a continuous random field. } 
	
	 For each $T \in \bbT$, let $\hat \tau_T$ be an arbitrary $\calt$-valued random variable and $\hat \theta_T$ a $\Theta$-valued random variable that asymptotically maximizes $\bbH_T(\cdot, \hat\tau_T)$ on $\Theta$.  
	Under some conditions in Section \ref{PQMLE}, we derive the asymptotic behavior of the estimator $\hat\theta_T$. 
	As already mentioned,
	the results below show that  the asymptotic behavior of $\hat{\theta}_T$ is the same for different sequences $\hat{\tau}_T$. Indeed, 
	  [{\bf H1}]-[{\bf H5}] do not depend on  $\hat\tau_T$ nor  on the choice of $\hat\theta_T$.

	\subsection{Asymptotic behavior of the PQMLE}\label{PQMLE}
	{ To derive the asymptotic distribution of $\hat \theta_T$, we precisely describe  the assumptions, and  apply Theorem \ref{thm1}.}
	Let $\sfr$ be a positive integer, and  let $h : \Theta \times \calt\to \bbR^{\sfr}$ be a   
	 {continuous} map. 
	Define a set $\Xi^* \subset \Xi$ as 
	\beas
	\Xi^*&=&\big\{(\theta, \tau) \in \Xi ;h(\theta, \tau)=0\big\}.
	\eeas
	Then we give the precise definition of $\Theta^*$  as
	\beas
	\Theta^*&=&\big\{\theta \in \Theta ;   (\theta,  \tau) \in \Xi^* \text{ for some $\tau \in \calt$}\big\}. \eeas 
	
	As in Section \ref{settings}, let $\theta^*$ be an element of $\Theta^*$  which will be specified in [{\bf H2}] as a sufficiently parsimonious value.
	For each $j=1, ..., \sfp$, we denote by $\theta^*_j$ the $j$-th component of $\theta^*$. We decompose $(\theta^*_j)_{j \in \calj}$ as 
	\begin{eqnarray*}
		\theta^*_{i}\neq 0    ~~and~~\theta^*_{k}=0\qquad(i\in \calj_1, k\in\calj_0),
	\end{eqnarray*}
	where $\calj_1$ and $\calj_0$ are disjoint subsets of $\calj$ satisfying $\calj=\calj_1 \cup \calj_0$. 
	For notational simplicity, we assume that 
	\beas
	\calj_0=\{\sfp_1+1, ..., \sfp\},
	\eeas where $\sfp_1$ is some nonnegative integer. For $\theta=(\theta_1, ..., \theta_{\sfp}) \in \bbR^{\sfp}$, we denote $(\theta_k)_{k \in \{1, ..., \sfp\}\setminus \calj_0}=(\theta_{1}, ..., \theta_{\sfp_1})$ and $(\theta_k)_{k \in \calj_0}=(\theta_{\sfp_1+1}, ..., \theta_{\sfp})$ by $\overline\theta$ and $\underline\theta$, respectively. 
	For  each $j=1, ..., \sfr$, 
	let $\frak a_{j, T}>0$  be a deterministic number depending on  $T \in \bbT$ satisfying that $\frak a_{j, T} \to 0$. Define an $\sfr \times \sfr$ matrix $\frak a_T$ as  $\frak a_T={\rm diag}(\frak a_{1, T}, ..., \frak a_{\sfr, T})$.\footnote{\colr For any $a_1, ..., a_n \in \bbR$, ${\rm diag}(a_1, ..., a_n)$  denotes an $n \times n$  diagonal matrix whose $(i, i)$ entry is $a_i$ for every $i=1, ..., n$.}  Let  $\frak b_T>0$ be some deterministic sequence satisfying  that  \bea\label{bT}
	C_0^{-1}\max_{j=1, ..., \sfr}(\frak a_{j, T}^{-2}) ~\leq~  \frak b_T ~\leq~ C_0\min_{j=1, ..., \sfr}(\frak a_{j, T}^{-2})\qquad(T\in \bbT)
	\eea  for some constant $C_0\geq 1$. That is, $\frak a_{j, T}$ $(j=1, ..., \sfr)$   have the same rate  of convergence as $\frak b_T^{-\frac{1}{2}}$.

	\begin{en-text}Let $\{I\}_{I \in \cali}$ be partitions of $\{1, ..., \sfr\}$ satisfying that for each $I$,  
	\beas
	\max_{j \in I}\frak a_{j, T} \leq  C\min_{j \in I}\frak a_{j, T}\qquad(T\in \bbT)
	\eeas  for some constant $C>0$. That is, $\frak a_{j, T}$  $(j \in I)$  have the same rate of convergence.
	\end{en-text}
	We consider the following conditions. 
	{\colr Here the $r$ times tensor product of a vector $v$ is denoted by $v^{\otimes r}$. For a tensor $T=(T_{i_1, ..., i_k})_{i_1, ..., i_k}$ and vectors $v_1=(v_1^{i_1})_{i_1}, ..., (v_k^{i_k})_{i_k}$, we write
		$
		T[v_1 ,..., v_k]\yeq T[v_1 \otimes \cdots \otimes v_k]\yeq \sum_{i_1, ..., i_k}T_{i_1, ..., i_k}v_1^{i_1}\cdots v_{k}^{i_k}.
		$ }
	\bd

	\im[${\bf [H1]}$]  $\calh_T$ satisfies the inequality: for any $(\theta, \tau) \in \Xi$ and any $T \in \bbT$,
	\bea\label{X1}
	\calh_T(\theta, \tau)-\calh_T(\theta^*, \tau)\leq  K_T(\theta, \tau)[\frak a_T^{-1}h(\theta, \tau)]-\frac{1}{2}\big\{G(\theta, \tau)+r_T(\theta, \tau)\big\}[\big(\frak a_T^{-1} h(\theta, \tau)\big)^{\otimes 2}],
	\eea
	where  $K_T$ is a $C(\Xi ;\bbR^{\sfr})$-valued random variable depending on $T \in \bbT$ satisfying that 
	\beas
	\sup_{(\theta, \tau) \in \Xi}|K_T(\theta, \tau)|=O_P(1),
	\eeas
	$G$ is a $C(\Xi ; \bbR^{\sfr}\otimes \bbR^{\sfr})$-valued random variable satisfying that 
	with probability $1$, for any $(\theta, \tau) \in \Xi$, 
	\beas
	G(\theta, \tau) \text{ is positive definite, }
	\eeas
	and $r_T(\theta, \tau)$ is a $C(\Xi ; \bbR^{\sfr}\otimes \bbR^{\sfr})$-valued random variable depending on $T \in \bbT$ satisfying that 
	\beas \sup_{(\theta, \tau)\in \Xi}|r_T(\theta, \tau)|\overset{P}{\to}0.
	\eeas
	
 \im[${\bf [H2]}$] $\theta^*$ satisfies the parsimonious condition
	$\Theta^* \cap\{\theta \in \Theta; \underline\theta=0\}=\{\theta^*\}$. {$\big($Especially, $\theta^*$ satisfies (\ref{parsimonious}).$\big)$}
	{Moreover,  
		 there exists an open ball  $\calw$  in $\bbR^\sfp$ with  center
	at $\theta^*$ such that  
	\bd
	\im[(a)] $h$ can be  extended on $\calw \times \calt$ and for some constant $C>0$,
	\beas
	\big|h(\theta, \tau)-h(\ol\theta, 0, \tau)\big| \leq C|\underline \theta|\qquad\big(\theta \in \calw \cap \Theta, \tau \in  \calt\big),
	\eeas 
	\im[(b)] for some constant $\epsilon_0>0$,  
	\beas
	|h(\overline\theta, 0, \tau)| \geq \epsilon_0 |\ol\theta -\ol\theta^*| \qquad\big(\theta \in \calw \cap \Theta, \tau \in  \calt\big),
	\eeas    
	where $\overline\theta^*=(\theta^*_1, ..., \theta^*_{\sfp_1})$.
	\ed
}
\begin{en-text}
	\beas
	|h(\overline\theta, 0, \tau)| \geq \epsilon_0 |\ol\theta -\ol\theta^*| \qquad\big(\theta \in \calw \cap \Theta, \tau \in  \calt, \big),
	\eeas    
	where $\overline\theta^*=(\theta^*_1, ..., \theta^*_{\sfp_1})$.
\end{en-text}

	\im[${\bf [H3]}$]  For any $r>0$, 
	\beas
	\varlimsup_{\epsilon\to +0}\varlimsup_{T\to\infty}P\bigg[ {E_T(\theta^*)} \geq (1-\epsilon)\inf_{\theta^\dagger \in \Theta^*, |\theta^\dagger-\theta^*| \geq r}{E_T(\theta^\dagger)}  \bigg] =0,
	\eeas
	where $E_T : \Omega \times \bbR^\sfp \to [0, \infty)$ is defined as 
	\beas
	E_T(\omega, \theta)&=&\sum_{j \in \calj}\xi_{j, T}(\omega) p_j(\theta_j) \qquad\big(\omega \in \Omega, \theta=(\theta_1, ..., \theta_\sfp) \in \bbR^\sfp\big).
	\eeas

	\im[${\bf [H4]}$]
 For any $k \in \calj_0$ and any $i \in \calj_1$,
 \beas
 \xi_{i, T} \xi_{k, T}^{-1} =\begin{cases}
 	O_P(1) & (q_k<1)
 	\\ o_P(1)
 	&  (q_k=1) 
 \end{cases}.
 \eeas 
  \big(If $\xi_{k, T}=0$, then we interpret $\xi_{i, T}\xi_{k, T}^{-1}$   as $\infty$.\big)
 \begin{en-text}{\colr there exists a neighborhood $\calw^\prime$ of $\theta^*$ in $\bbR^\sfp$ such that for each $k \in \calj_0$, \beas
 \max_{i \in \calj_1}(\frak a_{i, T}\xi_{i, T})\sup_{\substack{\theta \in \calw^\prime \cap \Theta,\\ \tau \in \calt}}\frac{\big|\frak a_T^{-1}\big\{h(\theta, \tau)-h(\ol \theta, 0, \tau)\big\}\big|}{\big|(\xi_{k, T}\theta_k)_{k \in \calj_0}\big|}
\yeq \begin{cases}
	O_P(1) & (q_k<1) \\ o_P(1) &  (q_k=1)
	\end{cases}.
	\eeas }
\end{en-text}

	\ed
	
	\vspace{4mm}
	Let $\caln$ be a bounded open  set in $\bbR^{\sfp}$ satisfying
	{\bd  
		\im[(i)] for some $\delta>0$,  \bea\label{caln1}
		\Theta\cap\{|\theta-\theta^*|<\delta\} \subset \overline\caln, \eea
		\im[(ii)] for any $\theta \in \caln$ and any $0 < t \leq 1$,  
		\bea\label{caln2} t\theta +(1-t)\theta^* \in \caln.\eea
	\ed }
	\def\ol{\overline} 
	\noindent We suppose that $\calh_T$ {is $\bbR$-valued on $\Omega\times(\ol \caln \cap\Theta) \times \calt$ and } can be extended to an $\bbR$-valued continuous random field defined on $\Omega\times \ol\caln\times\calt$  satisfying that for every $\omega\in\Omega$ and $\tau\in\calt$, $\calh_T(\omega, \cdot, \tau)$ is of class $C^2(\ol\caln)$\footnote{For an open set $G$, $C^k\big(\ol G\big)$ denotes the set of all functions which are of class $C^k$ in $G$ and whose derivatives can be continuously extended on $\ol G$.}. Let $\overline{\Delta}({\theta^*})$ be an $\bbR^{\sfp_1}$-valued random variable, and  $\overline \Gamma(\theta^*)$  a $\calg$-measurable  $ \bbR^{\sfp_1}\otimes\bbR^{\sfp_1}$-valued random variable, where $\calg \subset \calf$ is a sub-$\sigma$-field.  Take $a_T \in GL(\sfp)$ as a deterministic diagonal matrix  defined by  \bea\label{aT}
	(a_T)_{jj}=\left\{\begin{array}{ll}   \frak a_{j, T} &\big(j \in \{1, ..., \sfp\}\setminus\calj_0\big) \\
	\frak b_{T}^{-\frac{\rho_j}{2}}&(j \in \calj_0)\end{array}\right., \label{a_T}
	\eea 
where $\rho_k>1$ $(k \in \calj_0)$ are  deterministic constants. From   [{\bf H5}] described below, $\frak b_{T}^{-\frac{\rho_k}{2}}$ gives  the  rate of convergence of $\xi_{k, T}^{-\frac{1}{q_k}}$ for each $k \in \calj_0$. 
	Besides, let $c_i : \Omega \to [0, \infty)$ $(i \in \calj_1)$ be nonnegative random variables, and $d_k : \Omega \to (0, \infty)$ $(k \in \calj_0)$  positive random variables.
	Then we also consider the following condition. 

	\bd
	
	\im[${\bf [H5]}$] For some $\tau_0 \in \calt$,
	\beas
	\sup_{\tau \in \calt}\big|a_T\partial_\theta\calh_T(\theta^*, \tau)-a_T\partial_\theta\calh_T(\theta^*, \tau_0)\big|\overset{P}{\to} 0,
	\eeas
	\begin{eqnarray*}
		&&\big(a_T\partial_\theta\calh_T(\theta^*, \tau_0), ~  (\frak a_{i, T}\xi_{i, T})_{i \in \calj_1}, ~{(\frak b_T^{-\frac{q_k\rho_k}{2} }\xi_{k, T})_{k \in \calj_0} \big)} \overset{d_s(\calg)}{\rightarrow}\bigg(\big(\overline{\Delta}({\theta^*}), 0\big), (c_i)_{i \in \calj_1}, (d_k)_{k \in \calj_0} \bigg),
		\eeas  
		in $\bbR^\sfp \times \bbR^{|\calj_1|} \times \bbR^{|\calj_0|}$.
		Also, for any $R>0$,
		\beas
		&&\sup_{\substack{(\theta, \tau)\in \ol \caln\times\calt\\ |a_T^{-1}(\theta-\theta^*)|\leq R}}\bigg|a_T\partial^2_\theta\calh_T(\theta, \tau)a_T+ \begin{pmatrix}
			\overline \Gamma(\theta^*) & O\\
			O & O
		\end{pmatrix} \bigg|\overset{P}{\rightarrow}0.
	\end{eqnarray*}
	\ed
	  Define $U_T$ and $U$ as (\ref{U_T}) and (\ref{U}), respectively.
	We also define $\hat{u}_T$ by $\hat{u}_T= (a_T)^{-1}(\hat{\theta}_T - \theta^*)$.
	In the following, for any $u=(u_1, ..., u_{\sfp})\in \bbR^\sfp$, we denote $(u_1, ..., u_{\sfp_1})$ by $\overline u$. Define $\bbZ$ and $\bbV_T$ for any $u \in \bbR^\sfp$ by
	\bea
	\bbZ(u)&=&\exp\bigg\{\overline\Delta(\theta^*)[\overline u]-\frac{1}{2}\overline \Gamma(\theta^*)\big[{\overline u}^{\otimes2}\big]-\sum_{i \in \calj_1}c_i\frac{d}{dx}p_i (\theta^*_i)u_i-\sum_{k \in \calj_0}d_k|u_k|^{q_k}\bigg\}\label{z}
	\eea
	and
	\bea
	\bbV_T(u)&=&\exp\bigg\{\overline \Delta_T(\theta^*, \hat\tau_T)[\overline u]-\frac{1}{2}\overline \Gamma(\theta^*)\big[{\overline u}^{\otimes2}\big]-\sum_{i \in \calj_1}\frak a_{i, T}\xi_{i, T}\frac{d}{dx}p_i (\theta^*_i)u_i\nonumber\\
	&&-\sum_{k \in \calj_0}{\frak b_{T}^{-\frac{q_k\rho_k}{2}}}\xi_{k, T} |u_k|^{q_k}\bigg\},\label{calv_T}
	\eea
	respectively,  where
	\beas
	\overline\Delta_T(\theta^*, \cdot) \yeq \bigg(\frak a_{1, T}\frac{\partial}{\partial\theta_1}\calh_T(\theta^*, \cdot), ..., \frak a_{\sfp_1, T}\frac{\partial}{\partial \theta_{\sfp_1}}\calh_T(\theta^*, \cdot)\bigg).
	\eeas
	 The following theorem constitutes the main result. Its proof is given in Section \ref{proofs}.
	 
	 \begin{theorem}\label{thmY2}
	 	Assume {\rm [{\bf H1}]-[{\bf H5}]} and  $[{\bf A3}]$.  Also, assume $[{\bf A4}]$ for $\bbZ$ defined as $(\ref{z})$. 
	 	Then 
	 	\beas
	 	\hat{u}_T= (a_T)^{-1}(\hat{\theta}_T - \theta^*)\overset{d_s(\calg)}{\to} \hat u.
	 	\eeas
	 	Moreover, assume $[{\bf A5}]$ for $\bbV_T$ defined as $(\ref{calv_T})$.
	 	Then 
	 	\beas
	 	\hat u_T-\hat v_T =o_P(1).
	 	\eeas
 \end{theorem}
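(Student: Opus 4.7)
The strategy is to invoke Theorem \ref{thm1}. Assumptions [A3], [A4] for $\bbZ$ as in (\ref{z}), and (for the second claim) [A5] for $\bbV_T$ as in (\ref{calv_T}) are already postulated, so the work reduces to verifying the abstract conditions [A1] (tightness) and [A2] (local expansion plus stable convergence of the principal part). Conclusions (\ref{hatu}) and (\ref{hatu-hatv}) of Theorem \ref{thm1} then translate directly into the two statements of Theorem \ref{thmY2}.

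For [A2] I would write
$$\log \bbZ_T(u,\tau)=\bigl[\calh_T(\theta^*+a_Tu,\tau)-\calh_T(\theta^*,\tau)\bigr]-\bigl[E_T(\theta^*+a_Tu)-E_T(\theta^*)\bigr],$$
the $s_T(\tau)$ contribution cancelling. On $U_T(R)\times\calt$ the perturbation $a_Tu$ is uniformly small, so $\theta^*+a_Tu\in\overline\caln$ for $T$ large and a second-order Taylor expansion of $\calh_T$ is available. Combining this with the $\tau$-uniform gradient convergence and the uniform Hessian convergence in [H5] reduces the likelihood increment to $\overline\Delta_T(\theta^*,\hat\tau_T)[\overline u]-\tfrac12\overline\Gamma(\theta^*)[\overline u^{\otimes2}]+o_P(1)$, since the $a_T$-scaling collapses both the gradient and Hessian onto the first $\sfp_1$ coordinates (the $\calj_0$-blocks of $a_T\partial_\theta\calh_T$ and $a_T\partial^2_\theta\calh_T a_T$ vanishing in the limit). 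For the penalty, indices $i\in\calj_1$ have $\theta^*_i\neq 0$, so a single Taylor step of the $C^1$ function $p_i$ yields $\xi_{i,T}\frak a_{i,T}p_i'(\theta^*_i)u_i+o_P(1)$ via [H5]; indices $k\in\calj_0$ have $\theta^*_k=0$ and $|(a_Tu)_k|=\frak b_T^{-\rho_k/2}|u_k|\leq\delta_0$ eventually, so $p_k(\cdot)=|\cdot|^{q_k}$ on that interval and the contribution equals exactly $\xi_{k,T}\frak b_T^{-q_k\rho_k/2}|u_k|^{q_k}$. Summing matches $\log\bbV_T$ precisely, yielding $\sup_{U_T(R)\times\calt}|\bbZ_T-\bbV_T|\overset{P}{\to}0$. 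The stable convergence $\bbV_T\overset{d_s(\calg)}{\to}\bbZ$ in $C(\overline{B_R})$ then follows by continuous mapping from the joint stable convergence asserted in [H5].

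The main obstacle is [A1]. I would use the upper bound from [H1]
$$\calh_T(\theta,\tau)-\calh_T(\theta^*,\tau)\leq K_T(\theta,\tau)[y]-\tfrac12(G+r_T)(\theta,\tau)[y^{\otimes2}],\qquad y:=\frak a_T^{-1}h(\theta,\tau),$$
whose right side is uniformly $O_P(1)$ by completion of the square and becomes very negative when $|y|$ is large. The event $\{|u|\geq R\}\times\calt$ splits into three regimes according to the location of $\theta=\theta^*+a_Tu$: (i) if $\theta$ stays bounded away from $\Theta^*$, then continuity of $h$ and compactness bound $|h(\theta,\tau)|$ uniformly below by a positive constant, so $|y|\to\infty$ and the [H1] quadratic sends the likelihood increment to $-\infty$; (ii) in a neighborhood of any $\theta^\dagger\in\Theta^*$ with $|\theta^\dagger-\theta^*|\geq r$, continuity of $E_T$ and [H3] yield $E_T(\theta)-E_T(\theta^*)\geq$ a positive constant with high probability, overwhelming the $O_P(1)$ likelihood increment; (iii) inside $\calw\cap\{|u|\geq R\}$, [H2] gives $|h(\theta,\tau)|\geq\epsilon_0|\overline\theta-\overline{\theta^*}|-C|\underline\theta|$, which together with (\ref{bT}) and $\rho_k>1$ converts to $|y|\gtrsim|\overline u|-o(1)|\underline u|$. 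When $|\overline u|$ dominates, [H1] delivers $-\Omega(|\overline u|^2)$ absorbing the $O(|\overline u|)$ contribution of the $\calj_1$ penalty; when $|\underline u|$ dominates, the $\calj_0$ penalty $\xi_{k,T}\frak b_T^{-q_k\rho_k/2}|u_k|^{q_k}$, bounded below by $d_k|u_k|^{q_k}>0$ via [H5], diverges in $|\underline u|$ while the likelihood increment stays $O_P(1)$; the tuning-rate comparison [H4] ensures that the $\calj_0$ penalty asymptotically outweighs any unfavorable linear contribution from the $\calj_1$ side. Patching the three sub-estimates via a finite covering of compact $\Theta$ yields [A1], after which Theorem \ref{thm1} delivers both assertions of Theorem \ref{thmY2}.
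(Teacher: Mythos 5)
Your proof plan matches the paper's own argument step for step: both reduce to Theorem~\ref{thm1} and then verify [A1] by combining the [H1] quadratic bound with the three-regime analysis you sketch (outside a neighborhood of $\Xi^*$; near $\Theta^*$ but away from $\theta^*$ via [H3] and the penalty comparison; near $\theta^*$ via [H2]/[H4] and the $\overline u$-vs-$\underline u$ dichotomy), and verify [A2] by the same Taylor expansion reducing $\log\bbZ_T$ to $\log\bbV_T$ up to $o_P(1)$ using [H5]. One point worth tightening when you write it out: in your regime (ii) the penalty gap $E_T(\theta)-E_T(\theta^*)$ must actually \emph{diverge}, not merely exceed a positive constant; this follows because any $\theta^\dagger\in\Theta^*$ with $|\theta^\dagger-\theta^*|\geq r$ has a nonzero $\calj_0$-coordinate (by [H2]) and $\xi_{k,T}\to\infty$ in probability (by [H5]), which, propagated to a neighborhood via Lemma~\ref{p_j} and controlled by the ratio estimate [H3], is exactly how the paper absorbs the $O_P(1)$ likelihood increment.
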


	\begin{remark}\label{remid}{\rm
			\bd

			\im[(i)]    Yoshida and  Yoshida \cite{yoshida2022quasi} assumed that    $q_j>0$, $\rho_k \geq 1$ and  {$d_k\geq 0$ $(j \in \calj, k \in \calj_0)$}.  
				However, in this article, we assume $0<q_j\leq 1$, $\rho_k>1$ and  $d_k > 0$ $(j \in \calj, k \in \calj_0)$. 
				It is because our estimator need to shrink its  $\calj_0$-components to $0$ by stronger penalization, 
				which leads to the explicit form of $\bbZ$  as (\ref{z}) where  the nuisance parameter $\tau$ does not appear.

			\im[(ii)] Condition [{\bf H1}] reflects that for each $\tau \in \calt$, by maximizing $\calh_T(\cdot, \tau)$, we can only identify $\theta \in \Theta$ satisfying  $h(\theta, \tau)= 0$. This implies that identifiability may break. 
			
			\im[(iii)] The first sentence of [{\bf H2}]  means that on $ \{\theta \in \Theta ; \underline\theta=0\}  \times \calt$, $h(\theta, \tau)= 0$ implies $\theta=\theta^*$.  
			In particular, under [{\bf H1}] and the first condition of [{\bf H2}], if we restrict the parameter space $\Theta$ to $ \{\theta \in \Theta ; \underline\theta=0\}$, then we can  identify  $\theta^*$ using $\calh_T$. This comes from the fact that  the model becomes identifiable  for $\theta$ if $\underline\theta=0$. 
		Near $\theta^*$, [{\bf H2}]-{\bf(a)}  imposes on $h$ some kind of Lipschitz continuity  with respect to $\underline\theta$, and {\bf (b)} requires a specific separation of the identifiable  model $\{\theta \in \Theta; \underline\theta=0\}\times\calt$ for $\ol\theta$.
			
			\im[(iv)] Roughly speaking, [{\bf H3}] reflects that $\theta^*$ is  the unique minimizer of the penalty term on $\Theta^*$. To make it easy to understand, assume that  there exists some positive random variable $\lambda_T$ depending on $T \in \bbT$ with $\lambda_T \overset{P}{\to} \infty $ such that for any $j \in \calj$,
			\bea\label{commonrate}
			\xi_{j, T} =\kappa_j \lambda_T,
			\eea where $\kappa_j$ $(j \in \calj)$ are some nonnegative random variables. Then the assumption [{\bf H3}] holds if and only if the following condition $[{\bf H3}]^{\prime}$ holds:
			\bd
			
			\im[{$\bf [H3]^{\prime}$}] With probability $1$, $\theta^*$ is the unique minimizer of $\sum_{j \in \calj}\kappa_jp_j(\theta_j)$ on $\Theta^*$.
			
			\ed
			Therefore, for [{\bf H3}], we only need to adjust the values of $\kappa_j$ $(j \in \calj)$ such that $\theta^*$ uniquely minimizes the penalty term on $\Theta^*$.
			
			{ \im[(v)] 
				 Assume that $q_k<1$ $(k \in \calj_0)$ and $\frak a_{j, T}=\frak b_{T}^{-\frac{1}{2}}$ $(j \in \calj)$. 
				An example of suitable $\{\xi_{j, T}\}_{j \in \calj}$ is given by (\ref{commonrate}) if $\kappa_j\geq 0$ $(j \in \calj)$ are  deterministic numbers with $\kappa_k >0$  $(k \in \calj_0)$ and
				\beas
				\lambda_T =\frak b_T^{\frac{r}{2}}
				\eeas
				for some $r>0$ with $q_k<r\leq 1$ $(k \in \calj_0)$.
				 Indeed, in this case, [{\bf H4}] obviously holds, and
				 by setting  $\rho_k=\frac{r}{q_k}>1$, we have
			\beas
			\frak a_{i, T}\xi_{i, T} \to \kappa_i1_{\{r=1\}} \quad(i \in \calj_1),\qquad  \frak b_T^{-\frac{q_k\rho_k}{2} }\xi_{k, T} = \kappa_k>0\quad(k \in \calj_0).
			\eeas
			Therefore, part of [{\bf H5}] imposed on $\xi_{j, T}$  also holds for $c_i= \kappa_i1_{\{r=1\}} $ and $d_k =\kappa_k$. Thus, just by tuning $\kappa_j$ for $[{\bf H3}]^\prime$, we obtain   $\{\xi_{j, T}\}_{j \in \calj}$ satisfying all the assumptions. 
			If $q_k=1$ $(k \in \calj_0)$, then  [{\bf H4}] implies the need for the adaptive Lasso.
				
		}\begin{en-text} If $h(\theta, \tau)$ is continuously differentiable with respect to $\underline\theta$, then 
				Condition  [{\bf H4}] holds under the following assumption.
				\bd
				\im[{$\bf [H4]^{\prime}$}] 
				For each $i \in \calj_1$, 
				$\xi_{i, T} \max_{j \in \calj}\frak a_{j, T}^2=o_P(1)$. 
				Also, 
					for each $k \in \calj_0$ and $i \in \calj_1$, $\frak a_{i, T}^{-1} \xi_{k, T}^{-\frac{1}{q_k}} ={\colb O_P(1)}$ and 
					\bea\label{xicompare}
					\xi_{i, T} \xi_{k, T}^{-1} \yeq \begin{cases}
						O_P(1) & (q_k<1) \\ o_P(1) &  (q_k=1)
					\end{cases}.
					\eea
					\ed
					This condition is obtained just by choosing $1$ as $\gamma_{k, i}$ in [{\bf H4}].  In particular, for each $k \in \calj_0$, if  $q_k<1$, i.e., the penalty $p_k$ is non-convex, then (\ref{xicompare}) holds  even when $\xi_{k, T}\approx\xi_{i, T}$ for any $i \in \calj_1$.  On the other hand, if  $q_k=1$, then  (\ref{xicompare}) holds only for $\xi_{k, T}$ that is  asymptotically much greater than $\xi_{i, T}$ for any $i \in \calj_1$, which leads to the need for the adaptive Lasso.
				\end{en-text}
			
			\ed
				
	} \end{remark}

\begin{remark}
	{\rm \def\tupg{\textup{g}}
		\def\tupgs{\big(\textup{g}_i(v)\big)}
		  A way to derive the explicit form of $h(\theta, \tau)$ satisfying [{\bf H1}] and [{\bf H2}] is given at the end of Section \ref{countingprocessgab}.
		If we does not need the explicit form  but only the existence of $h$, then  it  can be verified  by some algebraic results such as  Hironaka's theorem, resolution of singularities. 
		 \begin{en-text} 
		 	Assume that $\frak a_T$ equals some scalar matrix $\frak b_T^{-\frac{1}{2}}I_\sfr$ where $\frak b_T $ is a positive sequence with $\frak b_T \to \infty$. 
		 Let  $V_i\subset \bbR^\sfp$ $(i=1, ..., L)$ be open sets, and let $\tupg_i : V_i \to \bbR^\sfp$ $(i=1, ..., L)$  real analytic maps  satisfying
		    \beas
		    \Xi \subset \bigcup_{i=1}^L\tupg_i(V_i).
		    \eeas 
		\end{en-text} 
		\begin{en-text}
			For this family $\big\{(V_i, \tupg_i)\big\}_{i=1}^L$, consider the following condition instead of [{\bf X1}].
		 \bd 
		\im[${\bf [Y1]}$]  $\calh_T$ satisfies the inequality: for every $i=1, ..., L$ and any $v \in V_i$ and any $T \in \bbT$,
		\beas
		\calh_T\tupgs-\calh_T\big(\tupg(0)\big)\leq  M_{i, T}(v)\frak b_T^{\frac{1}{2}}v^k-\frak b_Tv^{2k},
		\eeas
		where  $M_{i, T}$ is a $C(V_i)$-valued random variable depending on $T \in \bbT$ satisfying that 
	 \beas
		\sup_{v \in V_i}|M_T(v)|=O_P(1).
		\eeas
		\ed
	\end{en-text}
		 This will be shown in the subsequent paper.
}
	\end{remark}

	\begin{en-text}
	\subsubsection*{Conditions  [{\bf A3}]-[{\bf A5}] in \ref{}}
	Define $U_T$ by 
\begin{eqnarray}\label{U_T}
U_T = \{u\in \mathbb{R}^\sfp ; \theta^* + a_T u \in \Theta\} 
\end{eqnarray}
For $A\subset \mathbb{R}^\sfp$ and  $\delta >0$,  $A^{\delta}$ denotes as
\begin{eqnarray*}  
	A^{\delta} = \big\{ x\in\mathbb{R}^\sfp ; \inf_{a \in A}|x-a| < \delta \big\}.
\end{eqnarray*}
Then we  define $U \subset \mathbb{R}^\sfp$ by \begin{eqnarray}\label{U}
U = \bigcap_{\delta >0} \bigcup_{N=1}^{\infty} \bigcap_{T\geq N} 	{U_T}^{\delta} .\end{eqnarray}
In the following, for any $u=(u_1, ..., u_{\sfp})\in \bbR^\sfp$, we denote $(u_1, ..., u_{\sfp_1})$ by $\overline u$. Define  for any $u \in \bbR^\sfp$,
	\begin{eqnarray}
\bbZ(u)&=&\exp\bigg\{\overline\Delta(\theta^*)[\overline u]-\frac{1}{2}\overline \Gamma(\theta^*)\big[{\overline u}^{\otimes2}\big]-\sum_{i \in \calj_1}c_i\frac{d}{dx}p_i (\theta^*_i)u_i-\sum_{k \in \calj_0}d_k|u_k|^{q_k}\bigg\}\label{z},\\
\bbV_T(u)&=&\exp\bigg\{\overline \Delta_T(\theta^*, \hat\tau_T)[\overline u]-\frac{1}{2}\overline \Gamma(\theta^*)\big[{\overline u}^{\otimes2}\big]-\sum_{i \in \calj_1}\frak a_{i, T}\xi_{i, T}\frac{d}{dx}p_i (\theta^*_i)u_i\nonumber\\
&&-\sum_{k \in \calj_0}\frak a_{k, T}^{{q_k\rho_k}}\xi_{k, T} |u_k|^{q_k}\bigg\},\label{calv_T}
\end{eqnarray}
where $\overline\Delta_T(\theta^*, \cdot)$ represents $\big(\frak a_{1, T}\frac{d}{d\theta_1}\calh_T(\theta^*, \cdot), ..., \frak a_{1, T}\frac{d}{d\theta_{\sfp_1}}\calh_T(\theta^*, \cdot)\big)$.
Then we re-state  [{\bf A3}]-[{\bf A5}] in J. Yoshida and N. Yoshida \ref{}. 

\bd
\im[{\bf[A3]}] $\ds U \supset \bigcap_{N=1}^{\infty} \overline{\bigcup_{T\geq N} U_T} $.

\im[{\bf[A4]}] 
There exists a $U$-valued random variable $\hat{u}$ such that with probability $1$,  
\begin{eqnarray*}{\mathbb{Z}(\hat{u})} = \sup_{U }{\mathbb{Z}(u)}
\end{eqnarray*}
and such that with probability 1, for all $u \in U$ with $u \neq \hat{u}$,
\begin{eqnarray*}  {\mathbb{Z}(u)} < {\mathbb{Z}(\hat{u})}.
\end{eqnarray*}

\im[{\bf[A5]}] There exist some $ T_0 \in \bbT$ and a sequence of  $U$-valued random variables $\{\hat{v}_T\}_{T\geq T_0, T\in \bbT}$ such that with probability $1$,  
\begin{eqnarray*}{\mathbb{V}_T(\hat{v}_T)} = \sup_{U}{\mathbb{V}_T(u)}
\end{eqnarray*}
and such that $\{\hat v_T\}_{T\geq T_0, T\in\bbT}$ is tight.

\ed
\end{en-text}


	\subsection{Selection consistency}\label{selection}
	In the following, assume that $\calj_0$ is not empty. 
	We denote by $\hat u_{j, T}$ the $j$-th component of $\hat u_T$  $(j=1, ..., \sfp)$.  When we can show that  $\hat u_{k, T} \overset{P}{\to} 0$ $(k\in \calj_0)$ using Theorem \ref{thmY2}, we can also show selection consistency under the following condition [{\bf S}]. Note that similarly as before, for $u=(u_1, ..., u_\sfp) \in \bbR^\sfp$ and $v=(v_1, ..., v_\sfp) \in \bbR^\sfp$, we denote $(u_1, ..., u_{\sfp_1})$,   $(u_j)_{j \in \calj_0}$,  $(v_1, ..., v_{\sfp_1})$ and $(v_j)_{j \in \calj_0}$ by $\overline u$, $\underline u$,  $\overline v$ and $\underline v$, respectively.  Note that $a_T$ is a diagonal matrix defined as (\ref{a_T}).
	\bd
	\im[{\bf [S]}] For any $R>0$ and any positive sequence $\delta_T$ with $\delta_T\rightarrow 0$ as $T\rightarrow\infty$, 
	\beas
	\sup_{u\in S(R, \delta_T)}\inf_{v \in I(R)}\frac{|\overline u-\overline v|}{\sum_{k \in \calj_0}|u_k|^{q_k}}\rightarrow 0 \qquad(T\rightarrow \infty),
	\eeas
	where \beas S(R, \delta_T)&=&\{u \in U_T ; |\overline u|\leq R, ~0<|\underline u|\leq\delta_T\}~~~and\\
	I(R)&=&\{v\in U_T; |\overline v|\leq R, ~|\underline v|=0\}.
	\eeas
	\ed

	\begin{theorem}\label{Selection2}
		Assume that {\rm [{\bf H1}]-[{\bf H5}]} and {\rm [{\bf S}]} hold and that 
		\begin{eqnarray*}\hat u_{k, T} \overset{P}{\rightarrow} 0\qquad(k\in \calj_0).
		\end{eqnarray*}
		Then 
		\begin{eqnarray*}\lim_{T \to \infty}P\big[(\hat \theta_{k, T})_{k \in \calj_0}=0 \big] = 1.
		\end{eqnarray*}
		
	\end{theorem}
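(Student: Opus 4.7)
The plan is an argument by contradiction. Suppose $P[\underline{\hat u}_T \neq 0]$ does not converge to $0$; after passing to a subsequence we construct, with probability bounded below, a competitor $\theta^* + a_T v_T \in \Theta$ at which $\bbH_T(\cdot,\hat\tau_T)$ strictly exceeds $\bbH_T(\hat\theta_T,\hat\tau_T)$, contradicting the asymptotic maximality of $\hat\theta_T$. Tightness of $\overline{\hat u}_T$ from Theorem \ref{thmY2}, the assumption $\hat u_{k,T}\overset{P}{\to}0$ for $k\in\calj_0$, and $d_k>0$ a.s.\ let us fix $R>0$, a deterministic null sequence $\delta_T\downarrow 0$, and $\eta>0$ so that the event
$$E_T := \{|\overline{\hat u}_T|\leq R,\ |\underline{\hat u}_T|\leq \delta_T,\ \min_{k\in\calj_0}d_k>\eta\}\cap\{\underline{\hat u}_T\neq 0\}$$
has probability bounded below along the subsequence. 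On $E_T$, $\hat u_T\in S(R,\delta_T)$ and $S_T:=\sum_{k\in\calj_0}|\hat u_{k,T}|^{q_k}>0$, so condition [{\bf S}] lets us pick, pointwise, $v_T\in I(R)\subset U_T$ with $|\overline{\hat u}_T-\overline v_T|\leq \eta_T S_T$ for a deterministic $\eta_T\downarrow 0$.

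The heart of the argument is the lower bound
$$\Delta_T := \bbH_T(\theta^* + a_T v_T, \hat\tau_T) - \bbH_T(\hat\theta_T, \hat\tau_T) \geq \tfrac{\eta}{2}\,S_T > 0$$
on $E_T$ with probability tending to $1$. Since $\underline v_T=0$, the $\calj_0$-penalty difference supplies the positive driving term
$$\sum_{k\in\calj_0}\xi_{k,T}\,p_k\big((a_T)_{kk}\hat u_{k,T}\big) = \sum_{k\in\calj_0}\frak b_T^{-q_k\rho_k/2}\xi_{k,T}|\hat u_{k,T}|^{q_k} = \sum_{k\in\calj_0}(d_k + o_P(1))|\hat u_{k,T}|^{q_k}\geq(\eta + o_P(1))\,S_T$$
by [{\bf H5}]. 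All remaining contributions must be shown to be $o_P(S_T)$. A mean-value expansion of $\calh_T$, combined with the uniform convergence $a_T\partial_\theta^2\calh_T\,a_T = -\mathrm{diag}(\overline\Gamma(\theta^*),O) + o_P(1)$ from [{\bf H5}] and the estimate $|v_T-\hat u_T|=O(S_T)$ on $E_T$, yields
$$\calh_T(\theta^*+a_T v_T,\hat\tau_T) - \calh_T(\hat\theta_T,\hat\tau_T) = a_T\partial_\theta\calh_T(\theta^*,\hat\tau_T)\cdot(v_T-\hat u_T) + o_P(S_T).$$
Splitting the linear term blockwise, the $\overline{\phantom u}$-block $\overline\Delta_T(\theta^*,\hat\tau_T)\cdot(\overline v_T-\overline{\hat u}_T)$ is $O_P(1)\cdot o_P(S_T)$, while its $\underline{\phantom u}$-block has coefficient $(\frak b_T^{-\rho_k/2}\partial_{\theta_k}\calh_T(\theta^*,\hat\tau_T))_{k\in\calj_0}\overset{P}{\to}0$ by [{\bf H5}] and is paired with $-\underline{\hat u}_T$, giving $o_P(1)\cdot|\underline{\hat u}_T|$. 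The $\calj_1$-penalty difference equals $\sum_{i\in\calj_1}\xi_{i,T}p_i'(\theta^*_i)\frak a_{i,T}(\hat u_{i,T}-v_{i,T})+o_P(\frak a_{i,T}\xi_{i,T})=O_P(1)\cdot o_P(S_T)$, since $\frak a_{i,T}\xi_{i,T}=O_P(1)$ and $|\hat u_{i,T}-v_{i,T}|\leq|\overline{\hat u}_T-\overline v_T|=o_P(S_T)$.

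The key obstacle is keeping every remainder at order $o_P(S_T)$ rather than merely $o_P(1)$ or $o_P(\delta_T)$, since $S_T$ can shrink arbitrarily fast on $E_T$. The decisive ingredient, beyond the vanishing of the $\underline{\phantom u}$-block of the rescaled score supplied by [{\bf H5}], is the elementary inequality $|\hat u_{k,T}|\leq|\hat u_{k,T}|^{q_k}$ valid once $|\hat u_{k,T}|\leq 1$ and $q_k\leq 1$, which yields $|\underline{\hat u}_T|\leq S_T$ on $E_T$ and absorbs every $\underline{\hat u}_T$-directional error into the penalty-dominated gain. The resulting strict inequality $\Delta_T\geq \frac{\eta}{2}S_T>0$ on $E_T$ contradicts $\bbH_T(\hat\theta_T,\hat\tau_T)\geq\bbH_T(\theta^*+a_T v_T,\hat\tau_T)$, which holds with probability tending to $1$ by the asymptotic maximality of $\hat\theta_T$ together with $\theta^*+a_T v_T\in\Theta$. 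Hence $P[\underline{\hat u}_T\neq 0]\to 0$, which is equivalent to the claim $P[(\hat\theta_{k,T})_{k\in\calj_0}=0]\to 1$.
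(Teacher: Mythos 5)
Your argument is the paper's own comparison recast as a contradiction rather than a chain of $\varlimsup$ estimates: both fix $R$ and a null sequence $\delta_T$, compare $\bbH_T$ (equivalently $\bbZ_T$) at $\hat u_T$ against a competitor in $I(R)$ whose closeness $[{\bf S}]$ guarantees, absorb every non-penalty increment into $o_P(1)\sum_{k\in\calj_0}|\hat u_{k,T}|^{q_k}$ using precisely $|\underline{\hat u}_T|\leq\sum_{k\in\calj_0}|\hat u_{k,T}|^{q_k}$, and win from the $\calj_0$-penalty drop; so the mathematical content matches the paper's proof, which works instead from the expansion $(\ref{S4Taylor})$. One imprecision worth flagging: $[{\bf H5}]$ only gives $\frak b_T^{-q_k\rho_k/2}\xi_{k,T}\overset{d_s(\calg)}{\to}d_k$, not $\frak b_T^{-q_k\rho_k/2}\xi_{k,T}=d_k+o_P(1)$, and including $\{\min_k d_k>\eta\}$ in $E_T$ does not by itself bound $\frak b_T^{-q_k\rho_k/2}\xi_{k,T}$ from below at finite $T$; you should instead intersect $E_T$ with $\{\min_{k\in\calj_0}\frak b_T^{-q_k\rho_k/2}\xi_{k,T}>\eta\}$, whose complement has small probability uniformly in large $T$ for small $\eta$ because $d_k>0$ a.s.\ (this is exactly the paper's step $\varlimsup_{\eta\to+0}\varlimsup_{T\to\infty}P[\xi_{k,T}\frak b_T^{-q_k\rho_k/2}\leq\eta]=0$), after which your argument goes through unchanged.
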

The proof is similar to the proof of Theorem 4.3 in \cite{yoshida2022quasi}, and written in Section \ref{proofs}.
\begin{rem}
	{\rm  In this paper,  we assume that $d_k >0$ and $\rho_k >1$ for all $k \in \calj_0$ from the beginning. Under this assumption, Condition [{\bf S2}]  in  \cite{yoshida2022quasi} always holds, and    [{\bf S1}]  in  Yoshida and Yoshida \cite{yoshida2022quasi}  is equivalent to  [{\bf S}].  }
	\end{rem}
	Similarly as Yoshida and Yoshida \cite{yoshida2022quasi}, 
		we give  a simple example where the selection consistency condition [{\bf S}] holds. A more complex case is treated in Section 4.4 in \cite{yoshida2022quasi}.
	\begin{example}\label{exeasysparse}
		{\rm Take $\Theta$ as 
			$
			\Theta = \prod_{i=1}^\sfp I_i,
			$ 
			where $I_i $ $(i=1, ..., \sfp)$ are subsets of $\bbR$.   Then
			for any $R>0$ and any positive sequence $\delta_T$ with $\delta_T\rightarrow 0$ as $T\rightarrow\infty$, 
			\beas
		\sup_{u\in S(R, \delta_T)}\inf_{v \in I(R)}\frac{|\overline u-\overline v|}{\sum_{k \in \calj_{0 }}|u_k|^{q_k}}
			&\leq & \sup_{u\in S(R, \delta_T)}\frac{|\overline u-\overline u|}{\sum_{k \in \calj_{0}}|u_k|^{q_k}}
			\yeq
			0\qquad{\big(\because \ol u \in S(R, \delta_T) \implies \ol u \in I(R) \big)}.
			\eeas
			Thus, [{\bf S}] holds. More generally, if  $\Theta$ can be decomposed as 
			\beas
			\Theta \yeq \bigg\{\theta \in \bbR^\sfp ; \big( (\theta_i)_{i \in \{1, ..., \sfp\} \setminus \calj_0}, (\theta_k)_{k \in \calj_0} \big)  \in A \times B \bigg\}
			\eeas
			for some $A \subset \bbR^{\sfp -|\calj_0|}$ and some $B \subset \bbR^{|\calj_0|}$,
			then the same argument goes.
		
	}
		
	\end{example}

\section{Superposition of  parametric proportional hazard models}\label{countingprocessgab}	
	Let $\mathcal{B} = (\Omega, \mathcal{F}, {\bf F}, P)$, ${\bf F}= (\mathcal{F}_t)_{t \in \bbR_+}$\footnote{\colr $\bbR_+=[0, \infty)$.} be a stochastic basis, and assume that {\bf F} satisfies usual conditions. We consider a one-dimensional point process $N_t$ 
	on $\bbR_+$. For the sake of simplicity,   the initial value $N_0$ assumed to be zero.
	Let the ${\bf F}$-intensity of $N_t$ be 
	\begin{eqnarray}\label{superposition}
	&&\lambda_t(g, \alpha, \beta)\yeq  g+\sum_{j=1}^{\sfa}\alpha_j e^{\beta_j X_{t}^{j}}\\
	&&\big(g \in [0, M_g], \, \alpha=(\alpha_1, ...,  \alpha_{\sfa}) \in [0, M_\alpha]^\sfa,  \, \beta=(\beta_1, ..., \beta_{\sfa}) \in {\colorr [-L_\beta, M_\beta]^{\sfa}},  \, t\geq0\big),\nonumber
	\end{eqnarray} 
	where $\{X_t^j\}_{t\in \bbR_+}$ $(j=1, ..., \sfa)$  are  
	$\bbR$-valued 
	${\bf F}$-predictable processes, and {\colorr  $M_g, M_\alpha>0$ and  $L_\beta, M_\beta \geq 0 $  are  constants with $ M_\beta+L_\beta >0$}.  
	Although $\beta_j$ and $X_t^j$ $(j=1, ..., \sfa)$ are assumed to be one-dimensional,  the discussion below can be generalized for vector-valued $\beta_j$ and $X_t^j$ as   (\ref{superpositiongeneral}).  The superposed model (\ref{superposition})  is also  thought of   as a two-layered neural network employing  exponential as the activation functions in  the hidden layer.

	Assume that $\{X^j_t\}_{t \in \bbR_+}$  $(j=1, ..., \sfa)$ are locally bounded. 
	We also assume the ergodicity of $X_t=(X_t^1, ..., X_t^\sfa)$  as follows: for any  bounded measurable function  $\psi$ on $\bbR^\sfa$,
	\begin{eqnarray}\label{ergo1}
	\frac{1}{T}\int_0^T    \psi(X_t)  dt \overset{P}{\rightarrow}   \int_{\bbR^\sfa} \psi(x) \nu(dx)\qquad(T \rightarrow \infty).
	\end{eqnarray}   where $\nu$ is a probability measure on $\bbR^\sfa$. 
	\begin{en-text}
		Note that from (\ref{ergo1}), for any compact subset $K \subset \bbR^{\sf{k}}$ and  any continuous function $f: \bbR^\sfa\times K \to \bbR$ satisfying that  $\big\{\sup_{\xi \in K}|f(X_t, \xi)| \big\}_{t \geq 0}$ is uniformly integrable with respect to $P$, 
		\begin{eqnarray}\label{ergouniform}
		\sup_{\xi \in K}\bigg|\frac{1}{T}\int_0^T   f(X_t, \xi)  dt- \int_{\bbR^\sfa} f(x, \xi) \nu(dx)\bigg|{\rightarrow} ~  0\quad in~L^1(dP)\qquad(T \rightarrow \infty).
		\end{eqnarray}  
	\end{en-text}
	
	Let the true intensity  be 
	\beas 
	\lambda^*_t \yeq g^*+\sum_{i \in \cala}\alpha_i^* e^{\beta_i^* X_t^i}\qquad(t \geq 0),
	\eeas
	where $\cala$ is a subset of $\{1, ..., \sfa\}$  and $g^* \in (0, M_g) , \alpha_i^*\in (0, M_\alpha),  \beta_i^* \in (-L_\beta, M_\beta)\setminus \{0\}$  $(i\in \cala)$. 
	{\colorr Note that if $\cala \neq \{1, ..., \sfa\}$, then  this model is non-identifiable. Indeed, for any subset $\calb \subset \cala^c=\{1, ..., \sfa\} \setminus \cala$ and any $\big(g, (\alpha_k)_{k \in \calb}\big) \in [0, M_g] \times [0, M_\alpha]^{|\calb|}$ with $g + \sum_{k \in \calb}\alpha_k=g^*$,
		\beas
		\lambda_t^*\yeq g+ \sum_{k\in \calb}\alpha_k e^{0\cdot X_t^k}+ \sum_{i\in \cala}\alpha_i^* e^{\beta_i^* X_t^i} \qquad(t \geq 0).
		\eeas}
	Therefore, there are many $(g, \alpha, \beta)$ satisfying $\lambda_t(g, \alpha, \beta)=\lambda_t^*$ $(t\geq 0)$.
	
	For given $T>0$, we consider  penalized estimation and maximize the estimation function
	\begin{eqnarray*}
		&&\mathbb{\Psi}_T(g, \alpha, \beta) \yeq \int_0^{T} \log\lambda_t(g, \alpha, \beta)dN_t-  \int_0^{T} \lambda_t(g, \alpha, \beta) dt -\kappa_{g}T^{\frac{r}{2}}|g|^q-\kappa_{\alpha}T^{\frac{r}{2}}\sum_{j=1}^{\sfa}|\alpha_j|^q\\
		&& \big(g \in [0, M_g], \alpha \in [0, M_\alpha]^\sfa,  \beta\in [-L_\beta, M_\beta]^{\sfa}\big),
	\end{eqnarray*} where $\kappa_{g}, \kappa_{\alpha}, r, q\geq 0$ are  tuning parameters with $0<q<r\leq1$.
	Let $(\hat g_T, \hat \alpha_T, \hat \beta_T)=(\hat g_{T}, \hat \alpha_{1, T}, ..., \hat \alpha_{\sfa, T}, \hat \beta_{1, T}, ..., \hat\beta_{\sfa, T})$ be a random variable which maximizes $\mathbb\Psi_T$ on $[0, M_g]\times [0, M_\alpha]^\sfa\times [-L_\beta, M_\beta]^\sfa$.
	
		 We assume the following conditions.
		 \bd
		 \im[{$\bf [P1]$}] 
		 {\colorr For any $p> 1$,
		 	\beas \sup_{\beta_j \in [-L_\beta, M_\beta], t \geq 0}E\bigg[\bigg\{\big(1+|X_t^j|^3\big) e^{\beta_j X^j_t}\bigg\}^p\bigg] < \infty \qquad(j=1, ..., \sfa).\eeas } 
		 
		  \im[{$\bf [P2]$}]  
		  For any $\beta=(\beta_1, ..., \beta_\sfa) \in [-L_\beta, M_\beta]^\sfa$,
		 \beas
		 \int_{\bbR^\sfa}{w(\beta, x)^{\otimes2}}\nu(dx)
		 \text{~~ is non-degenerate},
		 \eeas
		 where for any $x=(x_1, ..., x_\sfa) \in \bbR^\sfa$ and any $\beta \in [-L_\beta, M_\beta]^\sfa$,
		 \bea\label{w(beta, x)}
		 w(\beta, x)\yeq \bigg(1, ~ \bigg(\frac{e^{\beta_{i}x_{i}}-e^{\beta^*_{i}x_{i}} }{\beta_{i}-\beta_{i}^*}\bigg)_{ i \in \cala}\, , ~~\big(e^{\beta^*_{i}x_{i}}\big)_{i \in \cala}\, ,~~\bigg(\frac{e^{\beta_{k}x_{k}}-1}{\beta_{k}}\bigg)_{k \in \cala^c}\bigg).
		 \begin{en-text}
		 	w(\beta, x)&=&\big(1, \frac{e^{\beta_{k_1}x_{k_1}}-e^{\beta^*_{k_1}x_{k_1}} }{\beta_{k_1}-\beta_{k_1}^*}, ..., \frac{e^{\beta_{k_\sfb}x_{k_\sfb}}-e^{\beta^*_{k_\sfb}x_{k_\sfb}} }{\beta_{k_\sfb}-\beta_{k_\sfb}^*}, e^{\beta^*_{k_1}x_{k_1}}, ..., e^{\beta^*_{k_\sfb}x_{k_\sfb}},\\
		 &&\frac{e^{\beta_{l_1}x_{l_1}}-1}{\beta_{l_1}}, ..., \frac{e^{\beta_{l_{\sfa-\sfb}}x_{l_{\sfa-\sfb}}}-1}{\beta_{l_{\sfa-\sfb}}}\big)\\ 
		 &&~~~~~~~~~~~~~~~~~~~~~~~~~~~~~ ~~~~~~~~~~~\qquad\big(x=(x_1, ..., x_\sfa) \in [0, \infty)^\sfa\big).
		\end{en-text}
		 \eea

		 \im[{$\bf [P3]$}]  $\kappa_g<\kappa_\alpha$.
		 \ed
		 We define a  random variable $\overline\Delta^\dagger$
		 taking value in $\bbR^{1+2|\cala|}$ as $\displaystyle \overline\Delta^{\dagger} \sim N_{1+2|\cala|}\big(\ol \mu, \overline \Gamma\big)$\footnote{\colorr $N_{m}(\mu, \Sigma)$ denotes the $m$-dimensional Gaussian distribution with mean $\mu$ and covariance matrix $\Sigma$.} where
		 \beas
		 \ol\mu&=&-q1_{\{r=1\}}\bigg(\kappa_g(g^*)^{q-1},~ \big(0\big)_{i \in \cala}\,, ~ \big(\kappa_\alpha(\alpha^*_{i})^{q-1} \big)_{i \in \cala}\bigg), \label{olmu}\\
		 \overline \Gamma&=&\int_{\bbR^\sfa}\frac{v(x)^{\otimes2}}{g^*+\sum_{i\in \cala}\alpha_i^* e^{\beta_i^* x_i}} \nu(dx) \label{olgam}
		 \eeas
		 and
		 \beas
		 v(x)&=&\bigg(1,~  \big(\alpha_{i}^*x_{i}e^{\beta^*_{i}x_{i}}\big)_{i \in \cala},~  \big(e^{\beta^*_{i}x_{i}}\big)_{i \in \cala}\bigg)  \qquad\big(x=(x_1, ..., x_\sfa) \in {\colr \bbR^\sfa}\big).
		 \eeas
		 Note that if [{\bf P2}] holds, then $\overline{\Gamma}$ is also non-degenerate.
		 \begin{theorem}\label{point1} Assume  ${\bf [P1]}$-${\bf [P3]}$. Then
		 	\bea
		&& T^{\frac{1}{2}} \bigg(\hat g_T-g^*,~\big(\hat\beta_{i, T}-\beta_i^*\big)_{i \in \cala}\,, ~ \big(\hat \alpha_{i, T}-\alpha^*_i\big)_{i \in \cala}\bigg) \overset{d}{\to}		 	\overline \Gamma^{-1}\overline\Delta^{\dagger},\label{weakcon1inex1}\\
		&&T^{\frac{r}{2q}}\big(\hat \alpha_{k, T}\big)_{k \in \cala^c}\overset{P}{\to}0.\label{weakcon2ex1}
		 	\eea
		 	Moreover, 
		 	\beas
		 	\lim_{T\to \infty}P\big[\hat \alpha_{k, T}=0 ~~(k \in \cala^c),~~\hat \alpha_{i, T}\neq 0 ~~(i \in \cala)\big] =1.
		 	\eeas
		 	\end{theorem}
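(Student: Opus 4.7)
The plan is to apply Theorem \ref{thmY2} and Theorem \ref{Selection2} to the quasi-log likelihood $\calh_T(\theta,\tau) = \int_0^T\log\lambda_t(\theta,\tau)\,dN_t - \int_0^T\lambda_t(\theta,\tau)\,dt$. We split the unknown parameter as $\theta = \big(g,\,(\alpha_i)_{i\in\cala},\,(\beta_i)_{i\in\cala},\,(\alpha_k)_{k\in\cala^c}\big)$ and nuisance $\tau = (\beta_k)_{k\in\cala^c}$, with $\calj_1$ indexing $\{g\}\cup\{\alpha_i : i\in\cala\}$ (penalized, nonzero true value), $\calj_0$ indexing $\{\alpha_k : k\in\cala^c\}$ (penalized, zero true value), and the $\beta_i$ for $i\in\cala$ being unpenalized identifiable components. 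The target value is $\theta^* = (g^*,\,(\alpha_i^*)_{i\in\cala},\,(\beta_i^*)_{i\in\cala},\,0)$, and we take $\frak a_{j,T} = T^{-1/2}$ for $j\in\{1,\dots,\sfp\}\setminus\calj_0$, $\frak b_T = T$, and $\rho_k = r/q > 1$ so that $\frak b_T^{-\rho_k/2} = T^{-r/(2q)}$. The shape function $h$ comes from the algebraic identity $\lambda_t(\theta,\tau) - \lambda_t^* = w(\beta, X_t)\cdot h(\theta,\tau)$, with $w$ as in \eqref{w(beta, x)} and
\beas
h(\theta,\tau) \yeq \bigg(g - g^* + \sum_{k\in\cala^c}\alpha_k,\;\bigl(\alpha_i(\beta_i - \beta_i^*)\bigr)_{i\in\cala},\;\bigl(\alpha_i - \alpha_i^*\bigr)_{i\in\cala},\;(\alpha_k\beta_k)_{k\in\cala^c}\bigg).
\eeas

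\medskip

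The main obstacle will be verifying [{\bf H1}]. The plan is to combine the Doob--Meyer decomposition $dN_t = \lambda_t^*\,dt + dM_t$ with the elementary inequality $\log(1+y) \leq y - y^2/\bigl(2(1+|y|)\bigr)$ applied to $y = \lambda_t(\theta,\tau)/\lambda_t^* - 1$; after cancelling the $(\lambda_t-\lambda_t^*)\,dt$ terms, this yields
\beas
\calh_T(\theta,\tau) - \calh_T(\theta^*,\tau) \,\leq\, \int_0^T\frac{\lambda_t-\lambda_t^*}{\lambda_t^*}\,dM_t \,-\, \int_0^T \frac{(\lambda_t-\lambda_t^*)^2}{2\lambda_t^*(\lambda_t^* + |\lambda_t - \lambda_t^*|)}\,dN_t.
\eeas
Substituting the factorization rewrites the first term as $K_T(\theta,\tau)[\frak a_T^{-1}h(\theta,\tau)]$ with $K_T = T^{-1/2}\int_0^T w(\beta,X_t)/\lambda_t^*\,dM_t$ which is $O_P(1)$ uniformly in $(\theta,\tau)$ by Burkholder's inequality and [{\bf P1}]; the second term becomes $\tfrac{1}{2}\{G(\theta,\tau) + r_T(\theta,\tau)\}[(\frak a_T^{-1}h)^{\otimes 2}]$, where $G$ is the ergodic limit (via \eqref{ergo1} and [{\bf P1}]) of $T^{-1}\int_0^T w(\beta,X_t)^{\otimes 2}/\bigl(\lambda_t^*(\lambda_t^* + |\lambda_t-\lambda_t^*|)\bigr)\,dN_t$, positive definite by [{\bf P2}], with $r_T = o_P(1)$.

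\medskip

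The remaining conditions are more routine. [{\bf H2}] follows directly from the explicit form of $h$: $h(\theta,\tau) - h(\ol\theta, 0, \tau) = (\sum_k\alpha_k,\,0,\,0,\,(\alpha_k\beta_k)_k)$ is $O(|\underline\theta|)$, while on a neighborhood where $\alpha_i \geq \alpha_i^*/2 > 0$ one has $|h(\ol\theta, 0, \tau)| \gtrsim |\ol\theta - \ol\theta^*|$; the parsimony $\Theta^*\cap\{\underline\theta = 0\} = \{\theta^*\}$ is immediate. [{\bf H3}] reduces by Remark \ref{remid}(v) to the claim that $(g^*, 0)$ uniquely minimizes $\kappa_g g^q + \kappa_\alpha\sum_k\alpha_k^q$ under the constraint $g + \sum_k\alpha_k = g^*$ with $g,\alpha_k\geq 0$: strict concavity of $x\mapsto x^q$ for $q<1$ forces the minimizer to an extreme point of the simplex, and [{\bf P3}] selects $(g^*, 0)$. [{\bf H4}] is trivial since all $\xi_{j,T}$ are constants times $T^{r/2}$ with $q<1$. [{\bf H5}] is the standard martingale CLT and ergodic limit for the quasi-score and Hessian; the key point is that $\partial_{\theta_j}\lambda_t(\theta^*,\tau)$ does not depend on $\tau$ for $j\in\{g\}\cup\{\alpha_i,\beta_i : i\in\cala\}$, so the $T^{-1/2}$-scaled score is independent of $\tau$ at $\theta^*$, while the $T^{-r/(2q)}$-scaled score for $\alpha_k$ ($k\in\cala^c$) is $o_P(1)$ uniformly in $\tau$ because $r/(2q) > 1/2$ with moment bounds from [{\bf P1}]. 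Non-degeneracy of $\overline\Gamma$ follows from [{\bf P2}].

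\medskip

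Finally, since $\Theta$ is a product of intervals with $\theta^*$ interior in all coordinates except $\calj_0$, we have $U = \mathbb{R}^{1+2|\cala|}\times[0,\infty)^{|\cala^c|}$, giving [{\bf A3}]; [{\bf A4}] follows from strict concavity of the Gaussian quadratic in $\ol u$ (via non-degeneracy of $\overline\Gamma$) together with the unique maximum of $-d_k|u_k|^q$ at $u_k = 0$ on $[0,\infty)$ with $d_k = \kappa_\alpha > 0$ and $q < 1$; [{\bf A5}] comes from existence of a measurable argmax of $\bbV_T$ by continuity and coercivity; and [{\bf S}] is covered by Example \ref{exeasysparse} since $\Theta$ is a box. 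Applying Theorem \ref{thmY2} identifies $\hat u = (\overline\Gamma^{-1}\overline\Delta^\dagger,\,0)$, where the mean shift $\overline\mu$ in $\overline\Delta^\dagger$ arises from the $\sum_{i\in\calj_1}c_i\frac{d}{dx}p_i(\theta_i^*)u_i$ bias term (nonzero only when $r=1$, since $c_i = \kappa\cdot 1_{\{r=1\}}$), yielding \eqref{weakcon1inex1} and \eqref{weakcon2ex1}; Theorem \ref{Selection2} then delivers the selection consistency.
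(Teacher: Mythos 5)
Your plan is correct and reproduces the architecture of the paper's proof: the same reparametrization with $\overline\theta=(g,(\beta_i)_{i\in\cala},(\alpha_i)_{i\in\cala})$, $\underline\theta=(\alpha_k)_{k\in\cala^c}$ and nuisance $\tau=(\beta_k)_{k\in\cala^c}$, the same factorization $\lambda_t-\lambda_t^*=w(\beta,X_t)[h(\theta,\tau)]$, the same verifications of [{\bf H2}]--[{\bf H5}], [{\bf A3}]--[{\bf A5}] and [{\bf S}], and the same application of Theorems~\ref{thmY2} and \ref{Selection2}. The one place where you genuinely diverge is in [{\bf H1}]. You invoke the pointwise inequality $\log(1+y)\le y-y^2/(2(1+|y|))$, so that your lower bound for the curvature involves the $\theta$-dependent denominator $\lambda_t^*(\lambda_t^*+|\lambda_t-\lambda_t^*|)$. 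The paper instead Taylor-expands $\log(\lambda_t/\lambda_t^*)$ to second order and then introduces a compactly supported cutoff $\varphi_R$ together with a constant $M_R$ bounding $s\lambda_t+(1-s)\lambda_t^*$ on $\{|X_t|\le R+1\}$, which replaces the $\theta$-dependent denominator by $M_R\lambda_t^*$. This device is not cosmetic: it makes the limiting matrix $G(\theta,\tau)=M_R^{-1}\int w(\beta,x)^{\otimes 2}\varphi_R(x)\nu(dx)$ depend on the parameters only through $\beta$ and smoothly, so that Lemmas~\ref{O_P(1)ofNtilde} and \ref{ergouniform} (which require $C^1$ dependence on the parameter for the Sobolev argument) apply directly to control $K_T$ and $r_T$ uniformly over $\Xi$. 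In your formulation the integrand contains $|\lambda_t-\lambda_t^*|$, which is only Lipschitz in $(\theta,\tau)$ with a kink exactly on the true set $\Xi^*$, so Lemma~\ref{ergouniform} does not literally apply and a smoothing or localization step would still be needed to make the uniform ergodic remainder rigorous. Both routes yield [{\bf H1}], but the paper's cutoff is the cleaner way to close this step; everything downstream of [{\bf H1}] in your outline matches the paper, including the identification $\hat u=(\overline\Gamma^{-1}\overline\Delta^\dagger,0)$ with the bias $\overline\mu$ active only when $r=1$.
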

	 	 The proof is given in Section \ref{proof of points}, where Theorem \ref{thmY2} is applied. 
	 	 The key function $h$ in [{\bf H1}] and [{\bf H2}] can be obtained as follows. 
	 	 Define $\eta$ and $\eta^*$ as for any $g \in [0, M_g], \alpha \in [0, M_\alpha]^\sfa,  \beta\in [-L_\beta, M_\beta]^{\sfa} $ and $x \in \bbR^\sfa$,
	 	 \beas
	 	 \eta(g, \alpha, \beta, x) &=&g+\sum_{j =1}^\sfa \alpha_je^{\beta_j x_j},\qquad \eta^*(x) \yeq  g^*+\sum_{i \in \cala}\alpha_i^*e^{\beta_i^* x_i}.
	 	 \eeas
	 	 Note that $ \eta(g, \alpha, \beta, X_t)=\lambda_t(g, \alpha, \beta)$ and  $ \eta^*(X_t)=\lambda_t^*$.  Then
	 	\beas
	 	\eta(g, \alpha, \beta, x)-\eta^*(x)&=&g-g^*+\sum_{i \in \cala}\big\{\alpha_ie^{\beta_i x_i} -
	 	\alpha_i^*e^{\beta_i^* x_i}\big\} +\sum_{k \in \cala^c}\alpha_k e^{\beta_k x_k}\\
	 	\\&=&g-g^*+\sum_{i \in \cala}\sum_{m=0}^\infty \big(\alpha_i \beta_i^m -
	 	\alpha_i^*(\beta_i^*)^m \big)\frac{x_i^m}{m!} +\sum_{k \in \cala^c}\sum_{m=0}^\infty \alpha_k \frac{(\beta_k x_k)^m}{m!}.
	 	\eeas
 	 By equating the coefficients,  the equation	$\eta(g, \alpha, \beta, \cdot)-\eta^*(\cdot)=0$ holds if and only if 
 		\beas
 	\begin{cases} f_1 := g-g^*+\sum_{i \in \cala}(\alpha_i -\alpha_i^*)+ \sum_{k \in \cala^c}\alpha_k \yeq 0 
\y
 f_{2i} := \alpha_i \beta_i^m -
 \alpha_i^*(\beta_i^*)^m \yeq 0 \qquad(m  \geq 1, i\in\cala) \y
f_{3k} :=\alpha_k \beta_k^m \yeq 0 \qquad(m  \geq 1, k \in \cala^c)
 \end{cases}.
  \eeas
 This is equivalent to 
 \beas
 \begin{cases} g-g^*+\sum_{k \in \cala^c}\alpha_k \yeq 0 
 	\y
 	 \beta_i -
 	 \beta_i^* \yeq 0 \qquad( i\in\cala) 
 	\y
 	\alpha_i -
 	\alpha_i^*\yeq 0 \qquad(i\in\cala) \y
 	\alpha_k \beta_k \yeq 0 \qquad( k \in \cala^c)
 \end{cases}.
 \eeas
 That is, the ideal $\big(f_1, (f_{2i})_{i \in \cala}, (f_{3k})_{k \in \cala^c} \big)$ of $\bbR[g, \alpha,  \beta]$  equals  the ideal  
 \beas
 I = \bigg(g-g^*+\sum_{k \in \cala^c}\alpha_k,  ~
 (\beta_i -
 \beta_i^*)_{i\in\cala},~
 (\alpha_i -
 \alpha_i^*)_{ i\in\cala} , ~
 (\alpha_k \beta_k)_{ k \in \cala^c}\bigg).
 \eeas
 Therefore, we can decompose $\eta(g, \alpha, \beta, \cdot)-\eta^*(\cdot)$ as for any $x \in \bbR^\sfa$, 
 \beas
 \eta(g, \alpha, \beta, x)-\eta^*(x)\yeq \tilde w(\alpha, \beta, x) \bigg[\bigg(g-g^*+\sum_{k \in \cala^c}\alpha_k,  ~
 (\beta_i -
 \beta_i^*)_{i\in\cala},~
 (\alpha_i -
 \alpha_i^*)_{ i\in\cala} , ~
 (\alpha_k \beta_k)_{ k \in \cala^c}\bigg)\bigg],
 \eeas
 where
 \beas
 \tilde w(\alpha, \beta, x)\yeq \bigg(1, ~ \bigg(\alpha_i\frac{e^{\beta_{i}x_{i}}-e^{\beta^*_{i}x_{i}} }{\beta_{i}-\beta_{i}^*}\bigg)_{ i \in \cala}\, , ~~\big(e^{\beta^*_{i}x_{i}}\big)_{i \in \cala}\, ,~~\bigg(\frac{e^{\beta_{k}x_{k}}-1}{\beta_{k}}\bigg)_{k \in \cala^c}\bigg).
 \eeas
 Since $\tilde w$ is degenerate if $\alpha_i=0$ $(i \in \cala)$,  we take $\alpha_i$ from $\tilde w$ and obtain
 \beas
  \eta(g, \alpha, \beta, x)-\eta^*(x)\yeq w(\beta, x) \bigg[\bigg(g-g^*+\sum_{k \in \cala^c}\alpha_k,  ~
  \alpha_i(\beta_i -
  \beta_i^*)_{i\in\cala},~
  (\alpha_i -
  \alpha_i^*)_{ i\in\cala} , ~
  (\alpha_k \beta_k)_{ k \in \cala^c}\bigg)\bigg],
 \eeas 
 where $w$ is defined in (\ref{w(beta, x)}).
 Thus,  defining  $h(g, \alpha, \beta)$ as \beas
 h(g, \alpha, \beta)\yeq \big(g-g^*+\sum_{k \in \cala^c}\alpha_k,  ~
 \alpha_i(\beta_i -
 \beta_i^*)_{i\in\cala},~
 (\alpha_i -
 \alpha_i^*)_{ i\in\cala} , ~
 (\alpha_k \beta_k)_{ k \in \cala^c}\big),\eeas we have
 \bea\label{hexample}
 \lambda_t\big(g, \alpha, \beta)-\lambda_t^*=w(\beta, X_t)[h(g, \alpha, \beta)] \qquad\big(t\geq 0, g \in [0, M_g], \alpha \in [0, M_\alpha]^\sfa,  \beta\in [-L_\beta, M_\beta]^{\sfa}\big),
 \eea
 Note that the remaining function $w$ is non-degenerate in the sense of [{\bf P2}].

	\begin{en-text} \beas
	\lambda_t\big(g, \alpha, \beta)-\lambda_t^*&=&g-g^*+\sum_{i \in \cala}\big\{\alpha_ie^{\beta_i x_i} -
	\alpha_i^*e^{\beta_i^* x_i}\big\} +\sum_{k \in \cala^c}\alpha_k e^{\beta_k x_k}\\
	\\&=&g-g^*+\sum_{i \in \cala}\sum_{m=0}^\infty \big(\alpha_i \beta_i^m -
	\alpha_i^*(\beta_i^*)^m \big)\frac{x_i^m}{m!} +\sum_{k \in \cala^c}\sum_{m=0}^\infty \alpha_k \frac{(\beta_k x_k)^m}{m!}.
	\eeas
	For any $i \in \cala$,
	\beas
	\sum_{m=0}^\infty \big(\alpha_i \beta_i^m -
	\alpha_i^*(\beta_i^*)^m \big)\frac{x_i^m}{m!} &=& \alpha_i \sum_{m=0}^\infty \big( \beta_i^m -
	(\beta_i^*)^m \big)\frac{x_i^m}{m!} +(\alpha_i-\alpha_i^*)\sum_{m=0}^\infty  \beta_i^m \frac{x_i^m}{m!} \\
	&=& \alpha_i(\beta_i- \beta_i^*) \sum_{m=0}^\infty \frac{ \beta_i^m -
		(\beta_i^*)^m}{\beta_i-\beta_i^*} \frac{x_i^m}{m!} +(\alpha_i-\alpha_i^*)\sum_{m=0}^\infty  \beta_i^m \frac{x_i^m}{m!} 
	\eeas
	For any $k \in \cala^c$,
	\beas
	\sum_{m=0}^\infty \alpha_k \frac{(\beta_k x_k)^m}{m!}&=&\alpha_k +\alpha_k\beta_k\sum_{m=1}^\infty \alpha_k \frac{\beta_k^{m-1} x_k^m}{m!}
	\eeas
	Thus, 
	\beas
	\lambda_t\big(g, \alpha, \beta)-\lambda_t^*&=&g-g^*+\sum_{i \in \cala} \alpha_i(\beta_i- \beta_i^*) \sum_{m=0}^\infty \frac{ \beta_i^m -
		(\beta_i^*)^m}{\beta_i-\beta_i^*} \frac{x_i^m}{m!} +\sum_{k \in \cala^c}\sum_{m=0}^\infty \alpha_k \frac{(\beta_k x_k)^m}{m!}+\sum_{k \in \cala^c}\alpha_k +\alpha_k\beta_k\sum_{m=1}^\infty \alpha_k \frac{\beta_k^{m-1} x_k^m}{m!}
	\eeas
\end{en-text}

\section{Counting process having intensity with multicollinear covariates}\label{pointprocesswithmulticolinear}	\newcommand\ftrue{\alpha^*} \newcommand\true{\alpha^{**}}
		In this section, we also treat time-series data derived from some counting process although 
	the discussion here  can obviously applied to other examples that is essentially based on   the linear regression.
	Similarly as Section \ref{countingprocessgab}, let $\mathcal{B} = (\Omega, \mathcal{F}, {\bf F}, P)$, ${\bf F}= (\mathcal{F}_t)_{t \in \bbR_+}$ be a stochastic basis, and assume that {\bf F} satisfies usual conditions. We consider a 
	counting process $N_t$ 
	on $\bbR_+$. For the sake of simplicity,   the initial value $N_0$ assumed to be zero. 
	Let the ${\bf F}$-intensity of $N_t$ be 
	\begin{eqnarray}\label{ex2lambda}
	\lambda_t(\alpha)&=& \sum_{j=1}^{\sfa}\alpha_j X_{t}^{j}\qquad\big(\alpha=(\alpha_1, ...,  \alpha_{\sfa}) \in [0, M_\alpha]^\sfa \big),
	\end{eqnarray} 
	where $\{X_t^j\}_{t\in \bbR_+}$ $(j=1, ..., \sfa)$  are  ${\bf F}$-predictable 
	non-negative progressively measurable processes, and  $M_\alpha>0$  is a constant. 
	As parametric proportional hazard models, we may consider the composite with exponential function on the right-hand side of (\ref{ex2lambda}). In this case, the model does not need the boundary constraint $\alpha_j \geq 0$ and becomes slightly simpler.
	
	Assume that  $\{X^j_t\}_{t \in \bbR_+}$  $(j=1, ..., \sfa)$ are locally bounded.
	We also assume the ergodicity of $X_t=(X_t^1, ..., X_t^\sfa)$ as (\ref{ergo1}) for any bounded {measurable} function $\psi$ on $[0, \infty)^\sfa$, where $\nu$ is a probability measure on $[0, \infty)^\sfa$.  

	\begin{en-text} 案2 We  suppose that  for  an unknown non-empty subset  $D  \subsetneq \{1, ..., \sfa\}$,
		the set $ \{X^j\}_{j\in D}$ forms a basis in  the linear space generated by $\{ X^1, ..., X^{\sfa}\}$. Then  there exist  unknown real numbers  $\{b_{ij}\}_{i \in D^c,  j \in D}$ such that 
		\bea
		X_t^i=\sum_{j \in D} b_{i j}X_{t}^j \qquad(i \in D^c). \label{linearr}
		\eea
		There exists some $R>0$ satisfying
		\bea\label{Gnonde2}
		\int_{[0, \infty)^{\sfa}, |x|\leq R}\big((x_{j})_{j \in D}\big)^{\otimes2} \nu(dx)~is~non\text{-}degenerate.
		\eea
	\end{en-text}
	
	Suppose that $X$ is a  separable and stationary process whose invariant distribution is $\nu$   with   $E\big[|X_t|^2\big]=\int_{[0, \infty)^\sfa}|x|^2\nu(dx)< \infty$.   We consider  the singular case where $X$ is  multicollinear as
	\bea\label{ex2perfectmultico}
	0 ~<~ \sfr:={\rm rank}\bigg[	\int_{[0, \infty)^{\sfa}}x^{\otimes2}\nu(dx) \bigg]~<~ \sfa.
	\eea
	Then there exists an unknown subset  $D  \subsetneq \{1, ..., \sfa\}$ with $|D|=\sfr$ such that
	\bea\label{Gnonde2}
	\int_{[0, \infty)^{\sfa}}\big((x_{j})_{j \in D}\big)^{\otimes2}  \nu(dx)~is~non\text{-}degenerate.
	\eea
	Since
	\beas
	E\bigg[\frac{1}{T}\int_0^T X_t^{\otimes2}  dt\bigg] \yeq \int_{[0, \infty)^{\sfa}}x^{\otimes2} \nu(dx) \qquad(T>0),
	\eeas
	there exist  unknown real numbers  $\{b_{ij}\}_{i \in D^c,  j \in D}$ such that with probability one, 
	\bea
	X_t^i=\sum_{j \in D} b_{i j}X_{t}^j \qquad(i \in D^c, a.e. \,t). \label{linearr}
	\eea
	From the separability of $X$, (\ref{linearr}) holds for every $t\geq 0$.
	Thus, we have with probability one, 
	\bea\label{AandX}
	A \big((X^j)_{j \in D}\big)^\prime=X^\prime,
	\eea
	where $A=(A_{ij})_{\leq i\leq \sfa,  j \in D}$ is an $\sfa \times \sfr$ matrix  defined as
	\beas
	A_{ij} \yeq \begin{cases} \delta_{ij}& (i \in D, j \in D)\\ b_{ij}& (i \in D^c, j \in D) \end{cases}.
	\eeas
	Here $\delta_{ij}$ denotes the Kronecker delta.
	Then 
	\bea\label{lambdaforh}
	\lambda_t(\alpha)&=& \alpha A\big((X^j)_{j \in D}\big)^\prime \qquad\big( \alpha \in [0, M_\alpha]^\sfa \big).
	\eea
	
	{
		Let the true intensity $\lambda^*$ be 
		\beas
		\lambda^*_t \yeq \sum_{j=1}^\sfa \ftrue_j X_t^j \qquad(t\geq 0),
		\eeas
		where $\ftrue=(\ftrue_1, ..., \ftrue_\sfa) \in [0, \infty)^\sfa\setminus\{0\}^\sfa$. 
		\begin{en-text}
			We define non-random functions $\eta(x, \alpha)$ and $\eta^*(x)$ as
			\beas
			\eta(x, \alpha)\yeq \sum_{j=1}^\sfa \alpha_j x_j,  \quad \eta^*(x)=\sum_{j=1}^\sfa \ftrue^*_j x_j
			\qquad\big(x=(x_1, ..., x_\sfa) \in[0, \infty)^\sfa, \alpha \in [0, M_\alpha]^\sfa\big). \eeas
		\end{en-text} 
		Then for any $\alpha \in [0, M_\sfa]^\sfa$, the equation $\lambda_t(\alpha)=\lambda^*_t $ $\,(t \geq 0)$ holds   almost surely if and only if
		\beas
		\alpha A=\ftrue A. \label{truea}
		\eeas
		Thus, $\alpha$ is  non-identifiable sine $\sfa>\sfr$.  In particular, the set of all the true values is
		\beas
		\{\alpha \in [0, M_\alpha]^\sfa ; \alpha A= \ftrue A \} \yeq \{\ftrue + {\rm Ker}(A)\}\cap [0, M_\alpha]^\sfa,
		\eeas
		where ${\rm Ker}(A)$ denotes the kernel of $A : \bbR^\sfa \to \bbR^\sfr$,  $\alpha \mapsto \alpha A$. 
	}
	
	\begin{en-text}
		Let the true intensity $\lambda^*$ be 
		\beas
		\lambda^*_t=\sum_{j\in D}a_j^* X_t^j \qquad(t\geq 0),
		\eeas
		where $(a_j^*)_{j \in D}$ is an element of $[0, \infty)^{|D|}\setminus\{0\}^{|D|}$.
		Then for any $\alpha \in [0, M_\sfa]^\sfa$, the equation $\lambda_t(\alpha)=\lambda^*_t $ $\,(t \geq 0)$ holds   if and only if
		\bea
		\alpha A=(a_j^*)_{j \in D}^{\prime}. \label{truea}
		\eea
		Thus, this model is  non-identifiable sine $\sfa>\sfr$. {\colr In particular, the set of all the true values is
			\beas
			\{\alpha \in [0, M_\alpha]^\sfa ; \alpha A=(a_j^*)_{j \in D}^{\prime} \} \yeq \{y^* + {\rm Ker}(A)\}\cap [0, M_\alpha]^\sfa,
			\eeas
			where ${\rm Ker}(A)$ denotes the kernel of $A^\prime : \bbR^\sfa \to \bbR^\sfr$ {\colb $\alpha \to \alpha A$} and $y^*=(y_1^*, ..., y_\sfa^*) \in [0, \infty)^\sfa$ satisfies 
			\beas
			y^*A=(a^*_j)_{j \in D}.
			\eeas
			An example of   $y^*$ is 
			\bea\label{y*}
			y^*_j=\begin{cases} a_j^* &(j \in D) \\ 0 &  (j \in D^c)
			\end{cases}.
			\eea }
	\end{en-text}
	
	For given $T>0$, let us consider  penalized estimation and maximize the estimation function
	\begin{eqnarray*}
		\mathbb{\Psi}_T(\alpha) &=& \int_0^{T} \log\lambda_t(\alpha)dN_t-  \int_0^{T} \lambda_t(\alpha) dt -T^{\frac{r}{2}}\sum_{j=1}^{\sfa}\kappa_j|\alpha_j|^q\\
		&& \qquad\big(\alpha=(\alpha_1, ...,  \alpha_{\sfa}) \in [0, M_\alpha]^\sfa\big),
	\end{eqnarray*} where $\kappa_{j}, q, r >0$ $(j=1, ..., \sfa)$ are tuning parameters  with $0<q<r \leq 1$.
	Let $\hat \alpha_T=(\hat \alpha_{1, T}, ..., \hat \alpha_{\sfa, T})$ be a random variable which asymptotically maximizes $\mathbb\Psi_T$ on $ [0, M_\alpha]^\sfa$.
	
	We prepare to derive the asymptotic behavior of $\hat\alpha_T$. 
	Denote   by $Pe$ the time-invariant part of the penalty function    i.e.
	\beas
	Pe(\alpha)=\sum_{j=1}^\sfa\kappa_j|\alpha_j|^q \qquad(\alpha \in \bbR^\sfa).
	\eeas
	We consider the following conditions.
	\bd
	\im[${\bf [L1]}$] There exists some {$\true=(\true_1, ..., \true_\sfa) \in \{\ftrue + {\rm Ker}(A)\}\cap[0, M_\sfa)^\sfa$} which uniquely minimizes  $Pe$ on the true value sets $\{\ftrue + {\rm Ker}(A)\}\cap [0, M_\alpha]^\sfa$.

	\im[${\bf [L2]}$] $\ds \int_{[0, \infty)^\sfa}\frac{|x|^p}{\big\{\sum_{j=1}^\sfa\ftrue_jx_j\big\}^{p-1}}{ 1_{\big\{\sum_{j=1}^\sfa \ftrue_jx_j>0\big\}}}\nu(dx) <\infty$ $\qquad(p=2, 3, 4)$.
	
	\ed
	In the sense of [{\bf L1}], $\true$ is the most economical value.   Similarly as before, define $\calj_0, \calj_1\subset\{1, ..., \sfa\}$ as two partitions of $ \calj :=\{1, ..., \sfa\}$ satisfying 
	\beas
	\true_i \neq 0,~~ \true_k=0 \qquad(i \in \calj_1, k \in \calj_0).
	\eeas
	We also define an $\bbR^{|\calj_1|}$-valued random variable $\overline\Delta $ as
	$\displaystyle \overline\Delta \sim N_{|\calj_1|}(0, \overline \Gamma)$ , where
	\beas
	\overline \Gamma&=&\int_{[0, \infty)^\sfa}\frac{\big((x_i)_{i \in \calj_1}\big)^{\otimes2}}{\sum_{j=1}^\sfa\ftrue_j x_j}1_{\big\{\sum_{j=1}^\sfa \ftrue_jx_j>0\big\}} \nu(dx).
	\eeas
	From Lemma \ref{lemmafinal} described later,  {\colr $|\calj_1| \leq \sfr$}, and $\overline{\Gamma}$ is  non-degenerate under [{\bf L1}] and [{\bf L2}]. We  define an $\bbR^{|\calj_1|}$-valued random variable $\overline\Delta^\dagger$ as
	\beas
	\overline\Delta^{\dagger}\yeq \overline\Delta-q1_{\{r=1\}}(\kappa_i|\true_{i}|^{q-1})_{i \in \calj_1}.
	\eeas
	\begin{theorem}\label{point2} Assume $[{\bf L1}]$-$[{\bf L3}]$. Then
		\beas
		\big(T^{\frac{1}{2}}(\hat \alpha_{i, T}-
		\true_{i})_{i \in \calj_1}, T^{\frac{r}{2q}}(\hat\alpha_{k, T})_{k \in \calj_0}\big)\overset{d}{\to} \big(\overline\Gamma^{-1}\overline\Delta^{\dagger}, 0\big)
		\eeas
		Moreover, 
		\beas
		\lim_{T\to \infty}P\big[\hat\alpha_{k, T}=0 ~~(k \in \calj_0), ~~\hat\alpha_{i, T}\neq 0 ~~(i \in \calj_1)\big]=1.
		\eeas
	\end{theorem}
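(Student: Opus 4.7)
The plan is to apply Theorem \ref{thmY2} followed by Theorem \ref{Selection2}, taking the quasi-log-likelihood $\calh_T(\alpha) = \int_0^T \log\lambda_t(\alpha)\,dN_t - \int_0^T \lambda_t(\alpha)\,dt$, a trivial nuisance space, and $\theta^* = \true$. The multicollinearity relation (\ref{AandX}) yields the decomposition $\lambda_t(\alpha) - \lambda_t^* = h(\alpha)\cdot (X_t^j)_{j\in D}$ with $h(\alpha) := (\alpha - \true)A \in \mathbb{R}^\sfr$, which vanishes precisely on $\Xi^*$ and is therefore the natural choice of $h$. I would set $\frak a_{j,T} = T^{-1/2}$, $\frak b_T = T$, $\xi_{j,T} = \kappa_j T^{r/2}$, $p_j(x) = |x|^q$, and $\rho_k = r/q > 1$ for $k \in \calj_0$; these are tuned so that $\frak b_T^{-q\rho_k/2}\xi_{k,T} \equiv \kappa_k$ and $\frak a_{i,T}\xi_{i,T} = \kappa_i T^{(r-1)/2}$, which produces the limits $d_k = \kappa_k > 0$ and $c_i = \kappa_i 1_{\{r=1\}}$ required in [{\bf H5}] and matching the drift in $\overline\Delta^\dagger$.

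The core step is verifying [{\bf H1}]. Writing $dN_t = dM_t + \lambda_t^*\,dt$ with $M$ the compensated counting martingale and applying the elementary inequality $\log(1+y) \le y - y^2/(2(1+y))$ valid for $y > -1$, one obtains
\begin{equation*}
\calh_T(\alpha) - \calh_T(\true) \le \int_0^T \log\bigl(\lambda_t(\alpha)/\lambda_t^*\bigr)\,dM_t - \frac{1}{2}\int_0^T \frac{(\lambda_t(\alpha)-\lambda_t^*)^2}{\lambda_t(\alpha)}\,dt.
\end{equation*}
Substituting the decomposition of $\lambda_t(\alpha) - \lambda_t^*$, the second term rewrites as $-\frac{1}{2}\{G_T(\alpha) + r_T(\alpha)\}[(T^{1/2}h(\alpha))^{\otimes 2}]$ with $G_T(\alpha)\to G(\alpha) := \int ((x_j)_{j\in D})^{\otimes 2}/\lambda(\alpha,x)\,\nu(dx)$, positive definite by (\ref{Gnonde2}) and [{\bf L2}]; the martingale term, controlled via Cauchy--Schwarz and the bound $|\log(\lambda_t(\alpha)/\lambda_t^*)| \le C|h(\alpha)\cdot (X_t^j)_{j\in D}|/\lambda_t^*$ on $\{\lambda_t^* > 0\}$, can be cast as $K_T(\alpha)[T^{1/2}h(\alpha)]$ with $\sup_\alpha |K_T(\alpha)| = O_P(1)$ thanks to the $L^p$ moment bounds in [{\bf L2}]. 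Parsimony [{\bf H2}] is verified as follows: for any nonzero $v \in \mathrm{Ker}(A)$ with $v_k = 0$ $(k \in \calj_0)$, the path $t \mapsto Pe(\true + tv)$ is smooth near $0$ because $\true_i > 0$ for $i \in \calj_1$, and since $q < 1$ its second derivative $q(q-1)\sum_{i\in\calj_1}\kappa_i \true_i^{q-2} v_i^2$ is strictly negative unless $v \equiv 0$; since $\true$ is interior, such a $v$ would contradict [{\bf L1}], so $\Theta^* \cap \{\underline\alpha = 0\} = \{\true\}$. The Lipschitz and separation properties in [{\bf H2}](a),(b) are immediate from the linearity of $h$.

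Conditions [{\bf H3}]--[{\bf H5}] are then routine. [{\bf H3}] follows from Remark \ref{remid}(iv) together with [{\bf L1}] playing the role of $[{\bf H3}]^\prime$; [{\bf H4}] reduces to $\kappa_i/\kappa_k = O_P(1)$, covered by the $q_k < 1$ branch. For [{\bf H5}], the score $T^{-1/2}\partial_\alpha\calh_T(\true) = T^{-1/2}\int_0^T (X_t/\lambda_t^*)\,1_{\{\lambda_t^*>0\}}\,dM_t$ converges stably to a Gaussian vector with covariance $\overline\Gamma$ on the $\calj_1\times\calj_1$ block by the martingale CLT for counting processes, the Lindeberg condition being supplied by [{\bf L2}], while the $\calj_0$-components are $O_P(T^{(1-\rho_k)/2}) = o_P(1)$ due to $\rho_k > 1$. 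The observed information $-T^{-1}\partial^2_\alpha\calh_T(\alpha) = T^{-1}\int_0^T X_t^{\otimes 2}/\lambda_t(\alpha)^2\,dN_t$ converges uniformly in a neighborhood of $\true$ to the appropriate block structure by ergodicity (\ref{ergo1}) together with [{\bf L2}]. Finally $U = \mathbb{R}^{|\calj_1|} \times [0,\infty)^{|\calj_0|}$, so [{\bf A3}] is immediate, while [{\bf A4}] and [{\bf A5}] follow from the strict concavity of the quadratic $\overline u$-part of $\bbZ$ and the strict monotonicity of $u_k \mapsto d_k|u_k|^{q_k}$ on $[0,\infty)$.

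Theorem \ref{thmY2} then yields the maximizer $\hat u = (\overline\Gamma^{-1}\overline\Delta^\dagger, 0)$, proving the stated joint convergence. The product structure $\Theta = [0, M_\alpha]^\sfa$ places us in Example \ref{exeasysparse}, so the selection-consistency hypothesis [{\bf S}] holds, and combining $\hat u_{k,T} \overset{P}{\to} 0$ with Theorem \ref{Selection2} gives $P[\hat\alpha_{k,T} = 0,\ k \in \calj_0] \to 1$. The main technical obstacle I anticipate is verifying [{\bf H1}] uniformly in $\alpha \in \Theta$: the martingale term must be controlled without discarding its potentially singular behavior on $\{\lambda_t^* = 0\}$, and the $L^p$ estimates from [{\bf L2}] have to be deployed carefully so that $K_T$ is $O_P(1)$ uniformly on the whole parameter space rather than only locally near $\true$, which in turn is what justifies the argmax reasoning underlying Theorem \ref{thmY2}.
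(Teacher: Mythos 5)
Your high-level plan coincides with the paper's (verify [\textbf{H1}]--[\textbf{H5}], apply Theorem \ref{thmY2} and then Theorem \ref{Selection2} via Example \ref{exeasysparse}), and several details are right: the normalizing sequences, $h(\alpha)=(\alpha-\true)A$, the reductions of [\textbf{H3}] and [\textbf{H4}], the martingale-CLT step of [\textbf{H5}], and the [\textbf{H2}] argument via strict concavity of $Pe$ along ${\rm Ker}(A)$-directions. The gap is in [\textbf{H1}], and it is not the one you flagged at the end. You apply the inequality directly to the raw quasi-log-likelihood $\calh_T(\alpha)=\int_0^T\log\lambda_t(\alpha)dN_t-\int_0^T\lambda_t(\alpha)dt$. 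Because $\log(\lambda_t(\alpha)/\lambda_t^*)$ is only meaningful on $\{\lambda_t^*>0\}$, the quadratic term you extract is in fact $-\tfrac12\int_0^T\frac{(\lambda_t(\alpha)-\lambda_t^*)^2}{\lambda_t(\alpha)}1_{\{\lambda^*_t>0\}}dt$, and on $\{\lambda_t^*=0\}$ the compensator contributes only the linear, unfavorable-scaled term $-\int_0^T\lambda_t(\alpha)1_{\{\lambda_t^*=0\}}dt$ (nonnegative to drop, but of order $T\cdot|h(\alpha)|$, so not absorbable into $K_T$). The limiting ``information'' you are left with is therefore supported on $\{\ftrue\cdot x>0\}$, and positive definiteness of $\int((x_j)_{j\in D})^{\otimes2}1_{\{\ftrue\cdot x>0\}}\nu(dx)$ simply does not follow from (\ref{Gnonde2}) when $\nu$ charges $\{\ftrue\cdot x=0\}$: there may be $v\neq0$ with $v\cdot(x_j)_{j\in D}=0$ $\nu$-a.e.\ on $\{\ftrue\cdot x>0\}$ but not globally. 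Moreover, your $G(\alpha)=\int((x_j)_{j\in D})^{\otimes2}/\lambda(\alpha,x)\,\nu(dx)$ is not even finite-valued on all of $\Theta$ (take $\alpha=0$), so it cannot serve as the $C(\Xi;\bbR^\sfr\otimes\bbR^\sfr)$-valued random variable demanded by [\textbf{H1}].

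The paper resolves both problems by working with the \emph{modified} estimating function
$\widetilde{\mathbb{\Psi}}_T(\alpha)=\mathbb{\Psi}_T(\alpha)+\int_0^T\{\lambda_t(\alpha)-\tfrac{\lambda_t(\alpha)^2}{2M_L}\varphi_L(X_t)\}1_{\{\lambda_t^*=0\}}dt$, where $\varphi_L$ is a compactly supported cutoff and $M_L$ bounds $\lambda_t(\alpha)\varphi_L(X_t)$. The added quadratic penalty restores the information lost on $\{\lambda_t^*=0\}$, so after truncation the limiting $G$ is the matrix (\ref{Rlarge}), which \emph{is} non-degenerate for $L$ large by (\ref{Gnonde2}); the truncation simultaneously gives a bounded, continuous $G$ (with $M_L^2$ rather than $\lambda_t(\alpha)^2$ in the denominator). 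One then proves (\ref{suffices1})--(\ref{suffices2}) for any asymptotic maximizer $\widetilde\alpha_T$ of $\widetilde{\mathbb{\Psi}}_T$, and transfers back to $\hat\alpha_T$ by observing that $\widetilde{\mathbb{\Psi}}_T\geq\mathbb{\Psi}_T$ while, on the high-probability event $\{\widetilde\alpha_{k,T}=0\ (k\in\calj_0)\}$, the modification vanishes at $\widetilde\alpha_T$ and forces $\widetilde{\mathbb{\Psi}}_T(\hat\alpha_T)$ to equal $\widetilde{\mathbb{\Psi}}_T(\widetilde\alpha_T)$ asymptotically, so $\hat\alpha_T$ is also an asymptotic maximizer of $\widetilde{\mathbb{\Psi}}_T$. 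This modification-and-transfer step is essential and is the piece your proposal is missing.
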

	The proof is given in Section \ref{proof of points}.

	{Next, we consider a sufficient condition for [{\bf L1}]. Define $\cale$ as a family of all $E \subset \{1, ..., \sfa\}$ satisfying that $|E|=\sfa-\sfr$ and that
		\beas
		\left<\{e_j ; j \in E\}\right>\oplus {\rm Ker} A=\bbR^\sfa,
		\eeas
		where we denote by $e_j$ $(j=1, ..., \sfa)$ the element of $\bbR^\sfa$ with its $j$-th component $1$ and the other components $0$. For any $E \in \cale$, we denote by  $pr_E : \bbR^\sfa \to \bbR^\sfa$ the projection onto $\left<\{e_j ; j \in E\}\right>$ along ${\rm Ker}(A)$, that is,  for any $y \in \bbR^\sfa$,
		\beas
		pr_E(y)=\sum_{j \in E}z_je_j,
		\eeas
		where $(z_j)_{j \in E} $ is the unique element of $\bbR^{\sfa-\sfr}$ satisfying that $y-\sum_{j \in E}z_je_j \in {\rm Ker}A$.   Note that $pr_E(\ftrue)$ denotes the intersecting point of the two hyperplanes $\left<\{e_j ; j \in E\}\right>$ and $\ftrue + {\rm Ker}(A)$.
		Then we consider the following condition.
		\bd
		\im[${\bf [L1]^{\#}}$] There exists  some $E_0 \in \cale$  such that 
		$pr_{E_0}(\ftrue)\in [0, M_\alpha)^\sfa$ is the unique minimizer of $Pe$ on  the finite set 
		\beas
		\{pr_E(\ftrue) ; E \in \cale\} \cap [0, \infty)^\sfa.
		\eeas
		
		\ed
		\begin{proposition}\label{linearprop}
			If $[{\bf L1}]^{\#}$ holds, then $[{\bf L1}]$ holds and $\true$ in $[{\bf L1}]$ is determined as
			\beas
			\true \yeq pr_{E_0}(\ftrue).
			\eeas
			
		\end{proposition}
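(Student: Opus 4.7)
The overall strategy is to exploit the strict concavity of $Pe$ on the nonnegative orthant together with the polyhedral geometry of the true-value set $K := \{\ftrue + {\rm Ker}(A)\} \cap [0, M_\alpha]^\sfa$. Since $0 < q < 1$, each univariate map $t \mapsto \kappa_j t^q$ is strictly concave on $[0, \infty)$, so $Pe(\alpha) = \sum_{j=1}^\sfa \kappa_j |\alpha_j|^q$ is strictly concave on $[0, \infty)^\sfa \supset K$. Moreover, $K$ is a nonempty compact convex polytope: nonempty because $\ftrue \in K$, compact because bounded by the box $[0, M_\alpha]^\sfa$, and polyhedral because defined by affine equalities and box inequalities.

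The first key step is a reduction to vertices of $K$. If $\alpha^\circ \in K$ is a minimizer of $Pe$ lying in the relative interior of a positive-dimensional face of $K$, write $\alpha^\circ = t v_1 + (1-t) v_2$ with $v_1, v_2 \in K$ distinct and $t \in (0,1)$. Strict concavity then yields $Pe(\alpha^\circ) > t \, Pe(v_1) + (1-t) \, Pe(v_2) \geq Pe(\alpha^\circ)$, a contradiction, so every minimizer is an extreme point of $K$. The second step is to identify those extreme points: a vertex of $K$ at which no upper-bound constraint $\alpha_j = M_\alpha$ is active is uniquely determined by the equality $\alpha A = \ftrue A$ together with a family of active constraints $\alpha_j = 0$ sufficient to pin down a single point of the affine space $L := \ftrue + {\rm Ker}(A)$. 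The non-degeneracy condition (the direct-sum property) that characterizes membership in $\cale$ is exactly what makes this pin-down well defined, and the resulting vertex coincides with $pr_E(\ftrue)$ for the associated $E \in \cale$.

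Combining the two steps, every minimizer of $Pe$ on $K$ lies either in $\{pr_E(\ftrue) : E \in \cale\} \cap [0, M_\alpha]^\sfa$ or at a vertex with some coordinate equal to $M_\alpha$. By ${\bf [L1]^{\#}}$, the unique minimizer over the former is $pr_{E_0}(\ftrue)$, which already satisfies $pr_{E_0}(\ftrue) \in [0, M_\alpha)^\sfa$. The main obstacle I anticipate is excluding the minimizing vertices of the upper-boundary type: strict concavity alone gives no direct comparison with such vertices. I plan to handle this by noting that any vertex with $\alpha_j = M_\alpha$ contributes at least $\kappa_j M_\alpha^q$ to the penalty, which exceeds the bounded value $Pe(pr_{E_0}(\ftrue))$ provided $M_\alpha$ is chosen sufficiently large; this is an implicit and harmless stipulation of the framework, since enlarging $M_\alpha$ does not alter the statistical problem as long as $\true \in [0, M_\alpha)^\sfa$. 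Under this condition, $pr_{E_0}(\ftrue)$ is the unique minimizer of $Pe$ on $K$, yielding ${\bf [L1]}$ with $\true = pr_{E_0}(\ftrue)$.
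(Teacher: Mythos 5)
Your reduction to extreme points of a polytope via strict concavity is correct as far as it goes, and the identification of non-upper-boundary vertices with $pr_E(\ftrue)$ is essentially the content of the paper's Lemma \ref{lemmaPe}, reached through slightly different geometry. The difficulty you flag at the end, however, is a genuine gap: the proposition does not assume $M_\alpha$ ``sufficiently large,'' so invoking that stipulation to rule out vertices on the upper faces $\{\alpha_j=M_\alpha\}$ quietly strengthens the hypothesis --- nothing in $[{\bf L1}]^{\#}$ by itself controls $Pe$ at such vertices, so as written your argument proves a weaker statement than the one claimed.

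The paper sidesteps the issue entirely by not restricting to the compact box at the first step. It applies Lemma \ref{lemmaPe} with $M=\infty$, so that every minimizer of $Pe$ over the unbounded set $\{\ftrue+{\rm Ker}(A)\}\cap[0,\infty)^\sfa$ lies in $\{pr_E(\ftrue) ; E\in\cale\}\cap[0,\infty)^\sfa$; there are no upper-face vertices to worry about since the only box constraints are $\alpha_j\ge 0$, and a minimizer exists because $Pe$ is coercive on $[0,\infty)^\sfa$. Then $[{\bf L1}]^{\#}$ pins down $pr_{E_0}(\ftrue)$ as the unique minimizer over that unbounded set. Finally, since $pr_{E_0}(\ftrue)\in[0,M_\alpha)^\sfa$ it also belongs to $\{\ftrue+{\rm Ker}(A)\}\cap[0,M_\alpha]^\sfa$, and a unique minimizer over a superset that happens to lie in the subset is automatically the unique minimizer over the subset. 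If you restructure your argument in this order --- prove unique minimization over $\{\ftrue+{\rm Ker}(A)\}\cap[0,\infty)^\sfa$ first, then restrict --- the upper-boundary obstacle disappears and the extra assumption on $M_\alpha$ is not needed.
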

		The proof is also given in Section \ref{proof of points}.
		\begin{remark}{\rm 
				The set $\{pr_E(\ftrue) ; E \in \cale\} \cap [0,  \infty)^\sfa$ is not empty. Indeed, denote the set $\{1\leq j \leq \sfa ; \ftrue_j>0\}$ by $F_1$. If $\left<\{e_j ; j \in F_1\}\right> \cap {\rm Ker}(A) \neq \phi$, then we can choose some $\alpha=(\alpha_1, ..., \alpha_\sfa)  \in \{\ftrue+{\rm Ker}(A)\}\cap [0, \infty)^\sfa$ such that
				\beas
				F_2 :=\{1\leq j \leq \sfa ; \alpha_j>0\} \subsetneq F_1.
				\eeas
				If $\left<\{e_j ; j \in F_2\}\right> \cap {\rm Ker}(A) \neq \phi$, then do the same.  Repeating these operations, we obtain some $\alpha^\dag \in \{\ftrue+{\rm Ker}(A)\}\cap[0, \infty)^\sfa$
				satisfying $\left<\{e_j ; j \in K\}\right> \cap {\rm Ker}(A) = \phi$, where $K=\{1\leq j \leq \sfa ; \alpha_j^\dag>0\}$. Then there exists some $E_K\in \cale$ such that $K\subset E_K$, and hence we have  $\alpha^\dag = pr_{E_K}(\alpha^\dag)=pr_{E_K}(\ftrue)$. Thus,  $\{pr_E(\ftrue) ; E \in \cale\} \cap [0,  \infty)^\sfa$ is not empty.
				In particular, by  choosing sufficiently large $M_\alpha$, Condition ${\bf [L1]^{\#}}$ holds.}
	\end{remark}}
\section{Proof of Theorems \ref{thmY2} and \ref{Selection2}}\label{proofs}
\begin{proof}[Proof of Theorem \ref{thmY2}]
	We first show [{\bf A1}] using [{\bf H1}]-[{\bf H4}] and the following properties assumed by  [{\bf H5}].
	\bea
	&&	\frak b_T^{-\frac{1}{2}}\xi_{i, T}~ \leq~ C_0^{\frac{1}{2}}\frak a_{i, T}\xi_{i, T}\yeq O_P(1) \quad(i \in \calj_1), \label{H51}\\
	&& \frak b_{T}^{-\frac{q_k\rho_k}{2}}\xi_{k, T}\to^d d_k>0 \quad(k \in \calj_0),\label{H52}
	\eea where $C_0\geq 1$ is constant satisfying (\ref{bT}).
	From [{\bf H1}] (and the definition of $E_T$ in [{\bf H3}]), for any $(\theta, \tau) \in \Xi$,
	\bea
	&&\bbH_T(\theta, \tau)-\bbH_T(\theta^*, \tau) \nonumber\\
	&=&\calh_T(\theta, \tau)-\calh_T(\theta^*, \tau)-\bigg(\sum_{j \in \calj}\xi_{j, T}p_j(\theta_j)-\sum_{j \in \calj}\xi_{j, T}p_j(\theta_j^*)\bigg)\nonumber\\
	&\leq & K_T(\theta, \tau)[\frak a_T^{-1} h(\theta, \tau)]-\frac{1}{2}\big\{G(\theta, \tau)+r_T(\theta, \tau)\big\}[\big(\frak a_T^{-1}h(\theta, \tau)\big)^{\otimes 2}]\nonumber\\
	&&-
	\big(E_T(\theta)-E_T(\theta^*)\big)\nonumber\\
	&\leq & O_P(1)\big|\frak a_T^{-1}h(\theta, \tau)\big|-\frac{1}{2}\big\{\lambda+o_P(1)\big\}\big|\frak a_T^{-1}h(\theta, \tau)\big|^{ 2}-\big(E_T(\theta)-E_T(\theta^*)\big) \label{A1X0}\\
	&=& O_P(1)-\big(E_T(\theta)-E_T(\theta^*)\big), \label{A1X01}
	\eea
	where $\lambda$ is a  random variable defined by 
	$\lambda= \inf_{(\theta, \tau) \in \Xi}\lambda_{\min}\big[G(\theta, \tau)\big]$.\footnote{\colr  $\lambda_{\rm min}[A]$ denotes the minimum eigenvalue of a  matrix $A$.}
	  Note that $\lambda$ is almost surely positive from [{\bf H1}].    From the equation $p_j(0)=0$ $(j \in \calj)$, 
	\beas
	\big(\max_{j=1, ..., \sfr}\frak a_{j, T}\big)^2E_T(\theta^*) ~\lesssim~\frak b_T^{-1}E_T(\theta^*) &=& \frak b_T^{-1}\sum_{i \in \calj_1}\xi_{i, T}p_i(\theta^*_i)\\
	&=&o_P(1) \qquad\big(\because (\ref{H51}) \big).\eeas   Therefore, (\ref{A1X0}) implies
	\beas
	&& \big(\max_{j=1, ..., \sfr}\frak a_{j, T}\big)^2\big\{\bbH_T(\theta, \tau)-\bbH_T(\theta^*, \tau) \big\}\\
	&\leq&  \big(\max_{j=1, ..., \sfr}\frak a_{j, T}\big)^2\bigg\{ O_P(1)\big|\frak a_T^{-1}h(\theta, \tau)\big|-\frac{1}{2}\big\{\lambda+o_P(1)\big\}\big|\frak a_T^{-1}h(\theta, \tau)\big|^{ 2}\bigg\}-\big(\max_{j=1, ..., \sfr}\frak a_{j, T}\big)^2E_T(\theta)+o_P(1)\\
	&\leq&  \big(\max_{j=1, ..., \sfr}\frak a_{j, T}\big)^2\bigg\{ O_P(1)\big|\frak a_T^{-1}h(\theta, \tau)\big|-\frac{1}{2}\big\{\lambda+o_P(1)\big\}\big|\frak a_T^{-1}h(\theta, \tau)\big|^{ 2}\bigg\}+o_P(1){\qquad\big(\because E_T(\theta) \geq 0\big)}.
	\eeas
	Therefore, for any  open set $O \subset{\bbR^{\sfp+\sfq}}$ that contains $\Xi^*$,
	\bea\label{O}
	\varlimsup_{T \to \infty}P\bigg[\sup_{(\theta, \tau) \in \Xi \setminus O}\big\{\bbH_T(\theta, \tau)-\bbH_T(\theta^*, \tau)\big\}\geq 0\bigg]=0.
	\eea
	Thus, we only need to consider  $(\theta, \tau) \in \Xi$ near $\Xi^*$.  
	

	For a positive number $r>0$, define $B=B(r)$ as $B=\{\theta \in \bbR^\sfp ; |\theta-\theta^*|<r\} \cap \Theta$. 
	Take $r$ satisfying that  the neighborhood $\calw$  in [{\bf H2}],
	\beas
	B\subset \calw  \cap \Theta.
	\eeas
	From [{\bf H2}],  $\Xi^*\cap\big\{(\theta, \tau) \in \Xi ; \underline \theta =0 \big\}\subset \{\theta^*\}\times\calt$. Therefore, for any  $(\theta^\dagger, \tau^\dagger)\in \Xi^*\setminus \big(B\times\calt\big)$, $|(\theta^\dagger_k)_{k \in \calj_0}|$ is positive, where $\theta^\dagger_j$ represents the $j$-th component of $\theta^\dagger$.
	Take any $\delta>0$ with $\delta<\frac{\delta_0}{4|\Theta|}$, where $\delta_0$ is the positive number defined in {\bf (iii)} in Section \ref{settings}, and $|\Theta|=\sup_{\theta \in \Theta}|\theta|$. 
	Since $\Xi^*\setminus \big(B\times\calt\big)$ is compact, there exist a finite subset $\{(\theta^\dagger_{(1)},  \tau^\dagger_{(1)}), ..., (\theta^\dagger_{(L)}, \tau^\dagger_{(L)})\} \subset \Xi^*\setminus \big(B\times\calt\big)$ and  a finite open cover $\{V_l\}_{l=1}^L$ of $\Xi^*\setminus \big(B\times\calt\big)$  such that $V_l$ is an open ball with center $(\theta^\dagger_{(l)},  \tau^\dagger_{(l)})$ and with radius $\eta_{l}$  where 
	\beas
	\eta_{l}=\delta\min_{\substack{j=1, ..., \sfp,  \theta^\dagger_{(l), j}\neq 0}}|\theta^\dagger_{(l), j}|, 
	\eeas and 
	$\theta^\dagger_{(l), j}$ represents the $j$-th component of $\theta^\dagger_{(l)}$. 
	Then,  from Lemma \ref{p_j} below, for any $l=1, ..., L$, any $(\theta, \tau) \in V_l$ with $|\theta|\leq |\Theta|$ and any $j \in \calj$,
	\beas
	{p_j(\theta_j)}&\geq&		(1-C_1\delta){p_j(\theta^\dagger_{(l), j})},
	\eeas
	where $C_1$ is some positive constant depending only on $p_j$ $(j \in \calj)$  and $|\Theta|$.
	Therefore, for any $l=1, ..., L$ and any $(\theta, \tau) \in V_l   \cap \Xi$,
	\beas
	E_T(\theta)-E_T(\theta^*)
	&\geq&(1-C_1\delta)E_T(\theta^\dagger_{(l)})-E_T(\theta^*)\\
	&=&E_T(\theta^\dagger_{(l)})\bigg(1-C_1\delta-\frac{E_T(\theta^*)}{E_T(\theta^\dagger_{(l)})}\bigg).
	\eeas
	Also, for any $l=1, ..., L$, $E_T(\theta^\dagger_{(l)}) \to \infty$ in probability since $|(\theta^{\dagger}_{(l), k})_{k \in \calj_0}|>0$ and since $\xi_{k, T} \to \infty$ in probability for any $k \in \calj_0$  from (\ref{H52}). 
	Thus, from (\ref{A1X01}) and [{\bf H3}],
	\bea
	&&\varlimsup_{T \to \infty}P\bigg[\sup_{(\theta, \tau) \in \cup_{l=1}^LV_l \cap \Xi
	}\big\{\bbH_T(\theta, \tau)-\bbH_T(\theta^*, \tau)\big\}\geq 0\bigg]\nonumber\\
	&\leq &\varlimsup_{T \to \infty}P\bigg[\sup_{(\theta, \tau) \in \cup_{l=1}^LV_l  \cap \Xi}\big\{O_P(1)-\big(E_T(\theta)-E(\theta^*)\big)\big\}\geq 0\bigg]\nonumber\\
	&\leq &\varlimsup_{T \to \infty}P\bigg[\sup_{l=1, ..., L}\frac{E_T(\theta^*)}{E_T(\theta^\dagger_{(l)})}\geq 1-2C_1\delta\bigg]\nonumber\\
	&\leq&\varlimsup_{T \to \infty}P\bigg[{E_T(\theta^*)}\geq (1-2C_1\delta)\inf_{\theta^\dagger \in \Theta^*\setminus B}E_T(\theta^\dagger)\bigg]\nonumber\\
	&\to& 0 \label{V_l},
	\eea as $\delta \to 0$.  Since $ (B \times \calt)  \cup \big(\cup_{l=1}^LV_l  \cap \Xi\big)= O \cap \Xi$ for some open set $O\subset{\bbR^{\sfp+\sfq}}$ and since it contains  $\Xi^*$,  the two evaluations (\ref{O}) and (\ref{V_l}) imply that 
	\beas
	&&\varlimsup_{T \to \infty}P\bigg[\sup_{(\theta, \tau) \in \Xi \setminus (B \times \calt)}\big\{\bbH_T(\theta, \tau)-\bbH_T(\theta^*, \tau)\big\}\geq 0\bigg] \\
	&\leq& \varlimsup_{T \to \infty}P\bigg[\sup_{(\theta, \tau) \in \cup_{l=1}^LV_l  \cap \Xi}\big\{\bbH_T(\theta, \tau)-\bbH_T(\theta^*, \tau)\big\}\geq 0\bigg]
	+\varlimsup_{T \to \infty}P\bigg[\sup_{(\theta, \tau) \in \Xi \setminus O}\big\{\bbH_T(\theta, \tau)-\bbH_T(\theta^*, \tau)\big\}\geq 0\bigg]
	\\&\overset{\delta \to 0}{\to}& 0.
	\eeas
	Thus, 
	\bea
	\varlimsup_{r \to 0}\varlimsup_{T \to \infty}P\bigg[\sup_{(\theta, \tau) \in \Xi \setminus (B \times \calt)}\big\{\bbH_T(\theta, \tau)-\bbH_T(\theta^*, \tau)\big\}\geq 0\bigg] &=& 0.
	\label{neigh}
	\eea

	\begin{en-text}
	From [{\bf H4}], take some constant $C_2$ satisfying
	\beas \sup_{\substack{\theta \in \calw^\prime \cap \Theta,\\ \tau \in \calt}}\frac{|h_i(\theta, \tau)-h_i(\ol \theta, 0, \tau)|}{\sum_{k\in\calj_0}|\theta_k|^{\gamma_{k, i}}} \leq C_2 \qquad( i \in \calj_1).\eeas
	\end{en-text}
We have 
for any $(\theta, \tau) \in B\times \calt$, 
\bea
\epsilon_0 |\ol \theta-\ol \theta^*| &\leq& |h(\ol\theta, 0, \tau)| \qquad\big(\because [{\bf H2}]~{\bf (b)}\big)
\nonumber
\\&\leq& |h(\theta, \tau)| +|h(\theta, \tau) -h(\ol\theta, 0, \tau)|\nonumber 
\\&\leq&|h(\theta, \tau)| +C_2\sum_{k \in \calj_0}|\theta_k|\qquad\big(\because [{\bf H2}]~{\bf (a)}\big)\label{hC} 
\eea for some constant $C_2>0$.
Therefore,  for any $(\theta, \tau) \in B\times \calt$,
\bea
\sum_{i \in \calj_1}\xi_{i, T}|\theta_i-\theta^*_i|
&\lesssim&  \max_{i \in \calj_1}\xi_{i, T}|\ol \theta-\ol\theta^*|
\\	 &\lesssim&  \max_{i \in \calj_1}\xi_{i, T}\bigg\{|h(\theta, \tau)| +\sum_{k \in \calj_0}|\theta_k|\bigg\}\nonumber
\\&\lesssim & \max_{i \in \calj_1}\xi_{i, T} \frak b_T^{-\frac{1}{2}}\big|\frak a_{T}h(\theta, \tau)\big| +\max_{i \in \calj_1}\xi_{i, T}\sum_{k \in \calj_0} |\theta_k|\nonumber
\\&= & O_P(1)\big|\frak a_{T}h(\theta, \tau)\big| +\max_{i \in \calj_1}\xi_{i, T}\sum_{k \in \calj_0} |\theta_k|\qquad\big(\because (\ref{H51})\big).
\label{hG}
\eea 
We may assume that  $r<|\theta^*_i|/2$ $(i \in \calj_1)$ and that $r< \delta_0$,  where $\delta_0$ is the positive number in the condition {\bf (iii)} in Section \ref{settings}. 
Then for some constant $C_3, C_4>0$ and  for any $(\theta, \tau) \in B\times \calt$,
\bea
-\big(E_T(\theta)-E_T(\theta^*)\big)&=&-\sum_{i \in \calj_1}\xi_{i, T}\big( p_i(\theta_i)-p_i(\theta^*_i)\big)-\sum_{k \in \calj_0}\xi_{k, T} p_k(\theta_k) \nonumber\\
&\leq& C_3\sum_{ i \in \calj_1} \xi_{i, T}|\theta_i-\theta^*_i|-\sum_{k \in \calj_0}\xi_{k, T} |\theta_k|^{q_k}\nonumber\\
&\leq& C_4 \bigg\{O_P(1)\big|\frak a_{T}h(\theta, \tau)\big| +\max_{i \in \calj_1}\xi_{i, T}\sum_{k \in \calj_0}|\theta_k|\bigg\}-\sum_{k \in \calj_0}\xi_{k, T} |\theta_k|^{q_k}\qquad\big(\because (\ref{hG})\big)  \nonumber\\
&=& O_P(1)\big|\frak a_{T}h(\theta, \tau)\big| +\sum_{k \in \calj_0}\bigg( C_4\max_{i \in \calj_1}\xi_{i, T}\xi_{k, T}^{-1}|\theta_k|^{1-q_k}-1\bigg)\xi_{k, T} |\theta_k|^{q_k}\nonumber\\
&\leq &O_P(1)\big|\frak a_{T}h(\theta, \tau)\big| +\sum_{k \in \calj_0}\bigg(C_4 \max_{i \in \calj_1}\xi_{i, T}\xi_{k, T}^{-1}|r|^{1-q_k}-1\bigg)\xi_{k, T} |\theta_k|^{q_k} \nonumber\\
&= &O_P(1)\big|\frak a_{T}h(\theta, \tau)\big|\nonumber\\&& +\sum_{k \in \calj_0}\bigg( C_4\max_{i \in \calj_1}\xi_{i, T}\xi_{k, T}^{-1}|r|^{1-q_k}-1\bigg)\frak b_{ T}^{-\frac{\rho_kq_k}{2} }\xi_{k, T} |\frak b_{ T}^{\frac{\rho_k}{2} }\theta_k|^{q_k}.
\label{E_T-E_T}
\eea 
Therefore, the two evaluations (\ref{A1X0}) and (\ref{E_T-E_T}) imply that  for any $(\theta, \tau) \in B\times \calt$,
\bea
\bbH_T(\theta, \tau)-\bbH_T(\theta^*, \tau)&\leq &O_P(1)|\frak a_T^{-1}h(\theta, \tau)|-(\lambda+o_P(1)) |\frak a_T^{-1}h(\theta, \tau)|^2\nonumber\\
&&-\sum_{k \in \calj_0}\bigg( 1-C_4\max_{i \in \calj_1}\xi_{i, T}\xi_{k, T}^{-1}|r|^{1-q_k}\bigg)\frak b_{ T}^{-\frac{\rho_kq_k}{2} }\xi_{k, T} |\frak b_{ T}^{\frac{\rho_k}{2} }\theta_k|^{q_k}\label{H_T1}\\
&\leq &O_P(1)-\sum_{k \in \calj_0}\bigg( 1-C_4\max_{i \in \calj_1}\xi_{i, T}\xi_{k, T}^{-1}|r|^{1-q_k}\bigg)\frak b_{ T}^{-\frac{\rho_kq_k}{2} }\xi_{k, T} |\frak b_{ T}^{\frac{\rho_k}{2} }\theta_k|^{q_k}\label{H_T11}.
\eea
Since $\max_{i \in \calj_1}\xi_{i, T}\xi_{k, T}^{-1}=\begin{cases}
O_P(1) & (q_k<1) \\ o_P(1) &  (q_k=1)
\end{cases}$ $\,(k \in\calj_0)$ from  [{\bf H4}] and from (\ref{H52}),  we obtain
\bea
\varlimsup_{\eta \to +0}\varlimsup_{r \to 0}\varlimsup_{T \to \infty}P\bigg[\min_{k \in \calj_0}\bigg\{\big( 1-C_4\max_{i \in \calj_1}\xi_{i, T}\xi_{k, T}^{-1}|r|^{1-q_k}\big)\frak b_{ T}^{-\frac{\rho_kq_k}{2} }\xi_{k, T}\bigg\}\leq \eta\bigg]\yeq 0.\label{20230107eta}
\eea 
From (\ref{H_T11}) and (\ref{20230107eta}), we have for any $\eta>0$,
\bea\label{A11}
&&\varlimsup_{K \to \infty}\varlimsup_{r \to 0}\varlimsup_{T \to \infty}P\bigg[\sup_{(\theta, \tau) \in B\times \calt, |(\frak b_{ T}^{\frac{\rho_k}{2}}\theta_k)_{k \in \calj_0}|\geq K}\big\{\bbH_T(\theta, \tau)-\bbH_T(\theta^*, \tau)\big\}\geq 0\bigg] \nonumber\\
&\leq &\varlimsup_{K \to \infty}\varlimsup_{r \to 0}\varlimsup_{T \to \infty}P\bigg[\sup_{(\theta, \tau) \in B\times \calt, |(\frak b_{ T}^{\frac{\rho_k}{2}}\theta_k)_{k \in \calj_0}|\geq K}\bigg\{O_P(1)\nonumber\\
&&\qquad -\sum_{k \in \calj_0}\bigg( 1-C_4\max_{i \in \calj_1}\xi_{i, T}\xi_{k, T}^{-1}|r|^{1-q_k}\bigg)\frak b_{ T}^{-\frac{\rho_kq_k}{2} }\xi_{k, T} |\frak b_{ T}^{\frac{\rho_k}{2} }\theta_k|^{q_k}\bigg\}\geq 0\bigg]\nonumber\\
&\leq&\varlimsup_{r \to 0}\varlimsup_{T \to \infty}P\bigg[\min_{k \in \calj_0}\bigg\{\big( 1-C_4\max_{i \in \calj_1}\xi_{i, T}\xi_{k, T}^{-1}|r|^{1-q_k}\big)\frak b_{ T}^{-\frac{\rho_kq_k}{2} }\xi_{k, T}\bigg\}\leq \eta\bigg]\nonumber\\
&\overset{\eta \to +0}{\to}& 0.\label{geqK}
\eea

Take an arbitrary $K>1$. From (\ref{hC}), for any $(\theta, \tau) \in \Xi$ with $|(\frak b_{ T}^{\frac{\rho_k}{2}}\theta_k)_{k \in \calj_0}|\leq K$,
\bea
|\frak a_T^{-1}h(\theta, \tau)| &\gtrsim & \frak b_T^{\frac{1}{2}}|h(\theta, \tau)|~\geq~ \frak b_T^{\frac{1}{2}}\bigg\{\epsilon_0 |\ol\theta-\ol\theta^*|-C_2\sum_{k \in \calj_0}|\theta_k|\bigg\}\nonumber
\\
&=& \frak b_T^{\frac{1}{2}}\bigg\{\epsilon_0 |\ol\theta-\ol\theta^*|-C_2\sum_{k \in \calj_0} \frak b_T^{-\frac{\rho_k}{2}}|\frak b_T^{\frac{\rho_k}{2}}\theta_k|\bigg\}\nonumber\\
&=& \epsilon_0\frak b_T^{\frac{1}{2}} |\ol\theta-\ol\theta^*|-C_2\sum_{k \in \calj_0} O_P(1)|\frak b_T^{\frac{\rho_k}{2}}\theta_k| \nonumber\\
&\geq & \frac{\epsilon_0}{C_5} \bigg|\bigg(\frak a_{i, T}^{-1}(\theta_i-\theta^*_i)\bigg)_{i \in \{1, ..., \sfp\}\setminus\calj_0}\bigg|- O_P(1)K \label{final},
\eea 
where $C_5>0$ is some constant.
\begin{en-text}Since $1-C_4\max_{i \in \calj_1}\xi_{i, T}\xi_{k, T}^{-1}|r|^{1-q_k}= O_P(1)$ from [{\bf H4}] and since $\frak b_{ T}^{-\frac{\rho_kq_k}{2} }\xi_{k, T}=O_P(1)$ from (\ref{H52}), 
	The inequality (\ref{H_T1}) implies that  for any $(\theta, \tau) \in B \times \calt$, with $|(\frak b_{ T}^{\frac{\rho_k}{2}}\theta_k)_{k \in \calj_0}|\leq K$,
	\bea
	\bbH_T(\theta, \tau)-\bbH_T(\theta^*, \tau) 
	&\leq&  O_P(1)|\frak a_T^{-1}h(\theta, \tau)|-(\lambda+o_P(1)) |\frak a_T^{-1}h(\theta, \tau)|^2 +O_P(1)K^{q_k} \label{H_T2}
	\eea
\end{en-text}
{For any subset $A_{T, r} \subset \Xi$ depending on $T$ and $r>0$, the inequality (\ref{H_T1}) implies that 
	\bea
	&&\varlimsup_{r \to 0}\varlimsup_{T \to \infty}P\bigg[\sup_{(\theta, \tau) \in (B\times \calt) \cap A_{T, r}}\big\{\bbH_T(\theta, \tau)-\bbH_T(\theta^*, \tau)\big\}\geq 0\bigg]\nonumber\\
	&\leq & \varlimsup_{r \to 0}\varlimsup_{T \to \infty}P\bigg[\sup_{(\theta, \tau) \in (B\times \calt) \cap A_{T, r}}\bigg\{O_P(1)|\frak a_T^{-1}h(\theta, \tau)|-(\lambda+o_P(1)) |\frak a_T^{-1}h(\theta, \tau)|^2\nonumber\\
	&&-\sum_{k \in \calj_0}\bigg( 1-C_4\max_{i \in \calj_1}\xi_{i, T}\xi_{k, T}^{-1}|r|^{1-q_k}\bigg)\frak b_{ T}^{-\frac{\rho_kq_k}{2} }\xi_{k, T} |\frak b_{ T}^{\frac{\rho_k}{2} }\theta_k|^{q_k}\bigg\}\geq 0\bigg]\nonumber\\
	&\leq & \varlimsup_{r \to 0}\varlimsup_{T \to \infty}P\bigg[\sup_{(\theta, \tau) \in (B\times \calt) \cap A_{T, r}}\bigg\{O_P(1)|\frak a_T^{-1}h(\theta, \tau)|-(\lambda+o_P(1)) |\frak a_T^{-1}h(\theta, \tau)|^2\bigg\} \geq 0\bigg]\label{H_T3},
	\eea 
	where the last inequality follows since (\ref{20230107eta}) implies 
	\beas
	\varlimsup_{r \to 0}\varlimsup_{T \to \infty}P\bigg[\min_{k \in \calj_0}\bigg\{\big( 1-C_4\max_{i \in \calj_1}\xi_{i, T}\xi_{k, T}^{-1}|r|^{1-q_k}\big)\frak b_{ T}^{-\frac{\rho_kq_k}{2} }\xi_{k, T}\bigg\}\leq 0\bigg]\yeq 0.
	\eeas}
Thus, from  (\ref{final}) and (\ref{H_T3}),  for any $K>1$,
\bea \label{leqK}
\varlimsup_{R \to \infty}\varlimsup_{r \to 0}\varlimsup_{T \to \infty}P\bigg[\sup_{\substack{(\theta, \tau) \in B\times \calt, |(\frak b_{ T}^{\frac{\rho_k}{2}}\theta_k)_{k \in \calj_0}|\leq K, \\ \big|\big(\frak a_{i, T}^{-1} (\theta_i-\theta^*_i)\big)_{i \in \{1, ..., \sfp\}\setminus\calj_0}\big|\geq R}}\big\{\bbH_T(\theta, \tau)-\bbH_T(\theta^*, \tau)\big\}\geq 0\bigg]\yeq 0.
\eea
From (\ref{neigh}), (\ref{geqK}) and (\ref{leqK}), the condition [{\bf A1}] holds.

Finally, we show [{\bf A2}] for the continuous random fields $\bbV_T$ and $\bbZ_T$ defined      by (\ref{calv_T}) and (\ref{z}), respectively, under [{\bf H5}].  
Take any positive number $R>0$, and fix it. We consider sufficiently large $T$ such that 
for any $u \in U_T(R) $, $\theta^*+a_Tu \in \{x=(x_1, ..., x_\sfp) \in \ol\caln ; { |x_i-\theta^*_i|<2^{-1}|\theta^*_i|}~(i\in\calj_1),~ |x_k| < \delta_0 ~(k \in \calj_0) \}$.
For each $T\in\bbT$, define a $C(U_T\times\calt)$-valued random variable $\calz_T$ as
\beas
\calz_T(u, \tau)=\exp\big\{\calh_T(\theta^*+a_Tu, \tau)-\calh_T(\theta^*, \tau)\big\} \qquad\big((u, \tau)\in U_T\times \calt\big).
\eeas
Then  the continuous random field $\bbZ_T$ defined by (\ref{Z_T}) satisfies that 
\beas
\bbZ_T(u, \tau)=\calz_T(u, \tau)\exp\bigg\{-\sum_{i \in \calj_1}\xi_{i, T}
\big(p_i(\theta_i^*+\frak a_{i, T}u_i)-p_i (\theta^*_i)\big)-\sum_{k \in \calj_0}{ \frak b_{ T}^{-\frac{q_k\rho_k}{2}}}\xi_{k, T} |u_k|^{q_k}\bigg\}
\eeas
for any $(u, \tau) \in U_T(R) \times \calt$.
Define $\Delta_T(\theta^*, \tau)$ and $R_T(u, \tau)$ as 
\begin{eqnarray*}
	\Delta_T(\theta^*, \tau)&=&a_T^{\prime}\partial_\theta\calh_T(\theta^*, \tau)\\
	R_T(u, \tau)&=&\int_0^1(1-k)\bigg\{a_T^{\prime}\partial^2_\theta\calh_T(\theta^*+ka_Tu, \tau)a_T+ \begin{pmatrix}
		\overline \Gamma(\theta^*) & O\\
		O & O
	\end{pmatrix} \bigg\}dk \\ 		
\end{eqnarray*} for any $(u, \tau) \in U_T\times\calt$, respectively.
Then,  from Taylor's series, for any $(u, \tau) \in U_T\times\calt$,
\begin{eqnarray*}
	\calz_T(u, \tau)=\exp\bigg\{\Delta_T(\theta^*, \tau)[u]-\frac{1}{2}\overline \Gamma(\theta^*)[{\overline u}^{\otimes 2}]+R_T(u, \tau)[u^{\otimes2}] \bigg\}.
\end{eqnarray*}
Then from [{\bf H5}], 
\beas
\sup_{(u, \tau)\in U_T\times \calt, |u|\leq R}\big|R_T(u, \tau)\big|=o_P(1).
\eeas
and
\beas
\sup_{\tau \in \calt}\big|\Delta_T(\theta^*. \tau)-\Delta_T(\theta^*, \tau_0)\big|=o_P(1).
\eeas
{\colorr	Thus, for any $(u, \tau) \in U_T \times \calt$ with $|u| \leq R$,
	\beas
	\calz_T(u, \tau)&=&\exp\bigg\{\Delta_T(\theta^*, \tau_0)[u]-\frac{1}{2}\overline \Gamma(\theta^*)[{\overline u}^{\otimes2}]+o_P(1)|u|\bigg\}\\
	&=& \exp\bigg\{\ol \Delta_T(\theta^*, \tau_0)[\ol u]-\frac{1}{2}\overline \Gamma(\theta^*)[{\overline u}^{\otimes2}]+o_P(1)|u|\bigg\}.
	\eeas
	(Obviously, $o_P(1)|u|$ may be denoted by $o_P(1)$. However,  we write in this form for use in the proof of Theorem \ref{Selection2}.)
	Then  for any $(u, \tau) \in U_T \times \calt$ with $|u| \leq R$,
	\bea
	\bbZ_T(u, \tau)&=&\exp\bigg\{\ol\Delta_T(\theta^*, \tau_0)[\ol u]-\frac{1}{2}\overline \Gamma(\theta^*)[{\overline u}^{\otimes2}]\nonumber\\
	&&-\sum_{i \in \calj_1}\frak a_{i, T}\xi_{i, T}\frac{d}{dx}p_i(\theta_i^*)u_i-\sum_{k \in \calj_0} \frak b_{ T}^{-\frac{q_k\rho_k}{2}}\xi_{k, T} |u_k|^{q_k}+o_P(1)|u|\bigg\}.\label{S4Taylor}
	\eea }
Therefore,   the continuous random field $\bbV_T$ defined by (\ref{calv_T}) satisfies that
\beas
\sup_{ u \in U_T(R), \tau \in \calt}\big|\bbZ_T(u, \tau)-\bbV_T(u)\big| \overset{P}{\to}0.
\eeas
Moreover, for the continuous random field $\bbZ$ defined by (\ref{z}) and for any $R>0$,
\beas
\bbV_T \overset{d_s(\calg)}{\to } \bbZ \qquad in~C\big(\overline B_R\big).
\eeas
Thus, [{\bf A2}] holds.

\end{proof}

\begin{lemma}\label{p_j}
	Let $M$ be a positive number. Take an arbitrary positive number $\delta$  with $\delta<\frac{\delta_0}{4M} \wedge 1$, where $\delta_0$ is defined in  Condition (iii) in Section \ref{settings}.  Also, take an arbitrary $x_0 \in \bbR\setminus\{0\}$ with $|x_0| \leq M$.
	Then  for any $j \in \calj$ and any $x \in \bbR$ with $|x|\leq M$ and with $|x-x_0|<  \delta |x_0|$,
	\bea\label{lemma6.1formula}
	{p_j(x)}&\geq&		(1-C_1\delta){p_j(x_0)},
	\eea
	where $C_1$ is some positive constant depending only on $p_j$ $(j \in \calj)$  and $M$. $($Note that $(\ref{lemma6.1formula})$ holds for any $x \in \bbR$  if $x_0=0$ since $p_j(0)=0$.$)$
\end{lemma}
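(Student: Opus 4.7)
The plan is to split into two cases according to the size of $|x_0|$ relative to $\delta_0$: a small-$|x_0|$ regime where property (iii) puts us inside the pure power function $|\cdot|^{q_j}$, and a regime where $|x_0|$ is bounded away from zero so that property (i) supplies $C^1$-smoothness together with a positive lower bound on $p_j$. The final constant $C_1$ will be the maximum of the constants produced in the two cases, taken also over the finite set $\calj$.

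In the regime $|x_0|\leq\delta_0/2$, the hypothesis $|x-x_0|<\delta|x_0|$ together with $\delta<1$ forces both $|x|$ and $|x_0|$ into $[0,\delta_0]$, so property (iii) gives $p_j(x)=|x|^{q_j}$ and $p_j(x_0)=|x_0|^{q_j}$, while the reverse triangle inequality gives $|x|\geq(1-\delta)|x_0|$. Since $t\mapsto t^{q_j}$ is concave on $[0,1]$ for $q_j\in(0,1]$ with values $0$ at $t=0$ and $1$ at $t=1$, we have $(1-\delta)^{q_j}\geq 1-\delta$, and hence $p_j(x)\geq(1-\delta)\,p_j(x_0)$. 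So $C_1=1$ suffices in this subcase.

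In the regime $|x_0|>\delta_0/2$, the case is vacuous unless $\delta_0\leq 2M$; granting this, the bound $\delta<\delta_0/(4M)$ forces $\delta<1/2$, so that $|x|\geq(1-\delta)|x_0|>\delta_0/4$ and $|x|\leq(1+\delta)|x_0|\leq 2M$. Both $x$ and $x_0$ then lie in the compact set $K:=\{y\in\bbR:\delta_0/4\leq|y|\leq 2M\}$, on which $p_j$ is $C^1$ with bounded derivative, and on which $p_j$ is bounded below by a positive constant by continuity together with property (ii). Writing $M_j:=\sup_K|p_j'|$ and $m_j:=\inf\{p_j(y):\delta_0/2\leq|y|\leq M\}>0$, the mean value theorem gives
\[
p_j(x)\;\geq\;p_j(x_0)-M_j|x-x_0|\;\geq\;p_j(x_0)\Bigl(1-\frac{M_jM}{m_j}\,\delta\Bigr),
\]
so that $C_1:=\max\bigl(1,\,\max_{j\in\calj}M_jM/m_j\bigr)$, which is well-defined since $\calj$ is finite, completes the argument.

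The main point to watch, rather than a genuine obstacle, is checking in the second case that the quantities $K$, $M_j$ and $m_j$ depend only on $p_j$ and $M$; this reduces to the observation that the restriction $\delta<\delta_0/(4M)\wedge 1$ is precisely what keeps both $x$ and $x_0$ uniformly inside a compact set avoiding zero, so that the $C^1$-estimates on $p_j$ apply uniformly over all admissible $(x_0,x,\delta)$.
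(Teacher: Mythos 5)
Your proof is correct and follows essentially the same two-case decomposition as the paper's: the pure-power regime $|x_0|\leq\delta_0/2$ where property (iii) gives the exact form $|\cdot|^{q_j}$, and the bounded-away-from-zero regime $|x_0|>\delta_0/2$ where a $C^1$ Lipschitz estimate combined with the positive infimum of $p_j$ on a compact annulus does the work. Your observation that $(1-\delta)^{q_j}\geq 1-\delta$ for $q_j\leq 1$ is a slightly cleaner way to close Case 1 than the paper's appeal to a constant $K_1$ depending on $q_j$, but the substance is identical.
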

\begin{proof}
	Fix $j \in \calj$ and  $x \in \bbR$ with $|x|\leq M$ and with $|x-x_0|<  \delta |x_0|$. 
	First, assume that $x_0 \leq \frac{\delta_0}{2}$.  Since $\delta |x_0|<\frac{\delta_0}{4}$, we have $|x|\leq \delta_0$. Therefore, from the condition {\bf (iii)},
	\beas
	\frac{p_j(x)}{p_j(x_0)}&=&\frac{|x|^{q_j}}{|x_0|^{q_j}}\\
	&\geq&\frac{(|x_0| - \delta|x_0|)^{q_j}}{|x_0|^{q_j}}\\
	&=&(1-\delta)^{q_j}\\
	&\geq& 1-K_1 \delta, 
	\eeas
	where $K_1>0$ depends only on $q_j$ (or $p_j$). 
	
	Second, assume that $x_0 > \frac{\delta_0}{2}$. Since $\delta |x_0|<\frac{\delta_0}{4}$, we have $|x|\geq \frac{\delta_0}{4}$.  Therefore,
	\beas
	\frac{p_j(x)}{p_j(x_0)}&=&1-\frac{p_j(x_0)-p_j(x)}{p_j(x_0)}\\
	&\geq&1-\frac{\sup_{\frac{\delta_0}{4} \leq |y|\leq M}|\frac{d}{dy}p_j(y)|}{\inf_{\frac{\delta_0}{2}\leq |y|\leq M}p_j(y)}\\
	&=&1-K_2 \delta,
	\eeas
	where $K_2>0$ depends only on $p_j$  and $M$.   Then we take $C_1$ as $C_1=\max_{j \in \calj} (K_1 \vee K_2)$.

\end{proof}

\begin{proof}[Proof of Theorem \ref{Selection2}]
	Take an arbitrary positive sequence $\epsilon_T$ with $\epsilon_T\rightarrow 0$ as $T\rightarrow\infty$.
	It suffices to show that \begin{eqnarray*}P\big[\big|(\hat u_{k, T})_{k \in \calj_0}\big|\geq \epsilon_T \big] \rightarrow 0.
	\end{eqnarray*}
	Since $\big|(\hat u_{j, T})_{j \in \calj_0}\big|=o_P(1)$, we can take a positive sequence $\delta_T$ with $\delta_T\rightarrow 0$ as $T\rightarrow\infty$ such that
	\beas
	\varlimsup_{T\rightarrow\infty}P\big[\big|(\hat u_{j, T})_{j \in \calj_0}\big|\geq \delta_T \big]=0.
	\eeas
	Then from (\ref{S4Taylor}), 
	\beas
	&&\varlimsup_{T\rightarrow\infty}P\big[\big|(\hat u_{j, T})_{j \in \calj_0}\big|\geq \epsilon_T \big]{\yeq \varlimsup_{T\rightarrow\infty}P\big[\epsilon_T\leq \big|(\hat u_{j, T})_{j \in \calj_0}\big|<\delta_T\big]}
\\&=&{\varlimsup_{R\rightarrow\infty}\varlimsup_{T\rightarrow\infty}P\big[\epsilon_T\leq \big|(\hat u_{j, T})_{j \in \calj_0}\big|<\delta_T, |\hat u_T|\leq R \big]}
\\&\leq &{\varlimsup_{R\rightarrow\infty}\varlimsup_{T\rightarrow\infty}P\big[\hat u_T \in S(R, \delta_T), ~\big|(\hat u_{j, T})_{j \in \calj_0}\big|\geq \epsilon_T   \big]}
\\&\leq&	\varlimsup_{R\rightarrow\infty}\varlimsup_{T\rightarrow\infty}
	P\bigg[\sup_{\substack{u\in S(R, \delta_T), \\
			|\underline u|\geq\epsilon_T}}\bbZ_T(u, { \hat \tau_T})-\sup_{\substack{v\in I(R) \\ 
		}}\bbZ_T(v, { \hat \tau_T})\geq0\bigg] \\
	&=&\varlimsup_{R\rightarrow\infty}\varlimsup_{T\rightarrow\infty}
	P\bigg[\sup_{\substack{u\in S(R, \delta_T), \\
			 |\underline u|\geq\epsilon_T}}\inf_{\substack{v\in I(R) \\ 
		 }}\big\{\bbZ_T(u, { \hat \tau_T})-\bbZ_T(v, {\hat \tau_T} )\big\}\geq0\bigg] \\
	&=&\varlimsup_{R\rightarrow\infty}\varlimsup_{T\rightarrow\infty}
	P\bigg[\sup_{\substack{u\in S(R, \delta_T), \\
			|\underline u|\geq\epsilon_T}}\inf_{\substack{v\in I(R) \\ 
		}}\big\{O_P(1)|\overline u-\overline v|+o_P(1)|\underline u|-\sum_{k\in \calj_0}\xi_{k, T}\frak b_{ T}^{-\frac{q_k\rho_k}{2}}|u_k|^{q_k}\big\}\geq0\bigg], 
	\eeas
	where $O_P(1)$ and $o_P(1)$ depend on $R>0$. Since $\xi_{k, T}\frak b_{ T}^{-\frac{q_k\rho_k}{2}} \overset{d}{\to} d_k>0$ $(k \in \calj_0)$ from [{\bf H5}], we have 
	\beas
	\varlimsup_{\eta \to +0}\varlimsup_{T \to \infty}P\big[\xi_{k, T}\frak b_{ T}^{-\frac{q_k\rho_k}{2}} \leq \eta\big]=0.
	\eeas
	Therefore,
	\beas
	&&\varlimsup_{T\rightarrow\infty}P\big[\big|(\hat u_{j, T})_{j \in \calj_0}\big|\geq \epsilon_T \big]\\&\leq&
	\varlimsup_{\eta \to 0, \eta>0}\varlimsup_{R\rightarrow\infty}\varlimsup_{T\rightarrow\infty}
	P\bigg[\sup_{\substack{u\in S(R, \delta_T), \\
			|\underline u|\geq\epsilon_T}}\inf_{\substack{v\in I(R) \\ 
		}}\big\{O_P(1)|\overline u-\overline v|+o_P(1)|\underline u|\\
	&&-\eta\sum_{k\in \calj_0}|u_j|^{q_k}\big\}\geq0\bigg]\\
	&\leq&
	\varlimsup_{\eta \to 0, \eta>0}\varlimsup_{R\rightarrow\infty}\varlimsup_{T\rightarrow\infty}
	P\bigg[\sup_{\substack{u\in S(R, \delta_T), \\
			|\underline u|\geq\epsilon_T}}\inf_{\substack{v\in I(R) \\ 
		}}\big\{O_P(1)\frac{|\overline u-\overline v|}{\sum_{k\in \calj_0}|u_j|^{q_k}}+o_P(1)-\eta\big\}\geq0\bigg].
	\eeas
	Since  [{\bf S}] implies that  for any $R>0$,
	\begin{eqnarray*}
		\sup_{\substack{u\in S(R, \delta_T), \\ |\underline u|\geq\epsilon_T}}\inf_{v\in I(R)}\frac{|\overline u-\overline v|}{\sum_{k\in \calj_0}|u_j|^{q_k}}\rightarrow 0 \qquad(T\rightarrow\infty),
	\end{eqnarray*}
	we have $\ds \varlimsup_{T\to\infty}P\big[\big|(\hat u_{j, T})_{j \in \calj_0}\big|\geq \epsilon_T \big] = 0$. Thus, Theorem  \ref{Selection2} holds.

\end{proof}

\section{Proof of Theorems \ref{point1} and  \ref{point2},  and Proposition \ref{linearprop}}\label{proof of points} 
We first prepare some lemmas.
\begin{lemma}\label{O_P(1)ofNtilde}
	Let $\{N_t\}_{t\geq 0}$ be a counting 
	process 
	whose intensity is denoted by $\lambda_t$. 
	Let $\{X_t\}_{t \geq 0}$ be an $\bbR^\sfd$-valued predictable 
	process assumed to be  locally bounded.
	Let $G \subset \bbR^{\sf{g}}$ be a bounded open domain   admitting the Sobolev embedding. 
	Let $f : \bbR^\sfd \times \ol G\to \bbR^{\sf{f}}$ be a measurable map satisfying the following conditions.
	\bd
	\im[(i)] For each $x \in \bbR^\sfd$, $f(x, \cdot)$ is of class $C^1(\ol G)$.
	\im[(ii)] $\sup_{\gamma \in G}\big|\partial_\gamma^{ i} f(\cdot, \gamma)\big|$ ${(i=0, 1)}$ are bounded on every bounded set of $\bbR^{\sfd}$. 
	\ed 
	If for any $p, q \geq 1$ with $p \geq 2q $,
	\beas
	\sup_{\gamma \in G, t \geq 0}E\bigg[\big|\partial_\gamma^i f(X_t, \gamma)\big|^{p}  \lambda_t^{q}\bigg] < \infty \qquad(i=0, 1),
	\eeas  
	then 
	\beas
	\sup_{{\colr T >1}}E\bigg[\sup_{\gamma\in G}\bigg|\frac{1}{\sqrt T}\int_0^Tf(X_t, \gamma) d\tilde N_t\bigg|\bigg]< \infty,
	\eeas
	where $\tilde N_t = N_t-\int_0^t\lambda_s ds$.
\end{lemma}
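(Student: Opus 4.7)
The plan is to combine a Sobolev embedding in the parameter $\gamma$ with a Burkholder--Davis--Gundy (BDG) type inequality for purely discontinuous martingales, so as to upgrade a family of pointwise $L^p$ bounds (in $\gamma$) to the $L^1$ bound on the supremum that is stated.

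First, I pick an exponent $p > \max(2, \sf{g})$ so that the Sobolev embedding $W^{1,p}(G) \hookrightarrow C(\ol G)$ is continuous, and set $F_T(\gamma) := T^{-1/2}\int_0^T f(X_t,\gamma)\,d\tilde N_t$. The embedding then gives
\beas
\sup_{\gamma \in G}|F_T(\gamma)|^p \;\lesssim\; \int_G |F_T(\gamma)|^p\,d\gamma + \int_G |\partial_\gamma F_T(\gamma)|^p\,d\gamma,
\eeas
with a constant depending only on $G$ and $p$. Provided $\partial_\gamma F_T(\gamma) = T^{-1/2}\int_0^T \partial_\gamma f(X_t,\gamma)\,d\tilde N_t$ (which I expect to justify by a finite-difference / dominated-convergence argument, relying on (i)--(ii) and the local boundedness of $X$), taking expectations and applying Fubini reduces the claim to
\beas
\sup_{T > 1,\, \gamma \in G}\Big\{E[|F_T(\gamma)|^p] + E[|\partial_\gamma F_T(\gamma)|^p]\Big\} < \infty.
\eeas

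For the pointwise moment bounds I would invoke the BDG inequality for purely discontinuous martingales: for $p \geq 2$ and a predictable process $g$,
\beas
E\bigg[\bigg|\int_0^T g_t\,d\tilde N_t\bigg|^p\bigg] \;\lesssim_p\; E\bigg[\bigg(\int_0^T g_t^2\, \lambda_t\,dt\bigg)^{p/2}\bigg] + E\bigg[\int_0^T |g_t|^p\, \lambda_t\,dt\bigg],
\eeas
using that the predictable quadratic variation of $\int g\,d\tilde N$ is $\int_0^\cdot g_t^2 \lambda_t\,dt$ and that $\sum_{s\leq T}|g_s \Delta N_s|^p = \int_0^T|g_t|^p\,dN_t$. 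Jensen's inequality applied to the probability measure $T^{-1}\,dt$ on $[0,T]$ gives $\big(\int_0^T g_t^2\lambda_t\,dt\big)^{p/2} \leq T^{p/2-1}\int_0^T |g_t|^p\lambda_t^{p/2}\,dt$, so after Fubini
\beas
E\bigg[\bigg|\int_0^T g_t\,d\tilde N_t\bigg|^p\bigg] \;\lesssim\; T^{p/2-1}\int_0^T E[|g_t|^p \lambda_t^{p/2}]\,dt + \int_0^T E[|g_t|^p \lambda_t]\,dt.
\eeas
Taking $g_t = \partial_\gamma^i f(X_t,\gamma)$ for $i=0,1$, the exponent pairs $(p,p/2)$ (which satisfies the constraint $p \geq 2\cdot p/2$) and $(p,1)$ both lie in the range of the hypothesis, so the integrands are bounded uniformly in $t$ and $\gamma$. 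The right-hand side is therefore $\lesssim T^{p/2} + T \lesssim T^{p/2}$ for $T > 1$, and dividing by $T^{p/2}$ delivers the desired pointwise $L^p$ bound for $F_T(\gamma)$ and for $\partial_\gamma F_T(\gamma)$ uniformly in $T > 1$ and $\gamma \in G$.

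Combining the two steps yields $\sup_{T>1} E[\sup_\gamma |F_T(\gamma)|^p] < \infty$, and a final application of $E[\sup_\gamma |F_T(\gamma)|] \leq \big(E[\sup_\gamma |F_T(\gamma)|^p]\big)^{1/p}$ completes the argument. The only delicate point in the plan is the exchange of $\partial_\gamma$ with the stochastic integral against $\tilde N$; beyond that, everything reduces to BDG plus Jensen and is routine.
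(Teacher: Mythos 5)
Your proposal is correct and follows the same two-step skeleton as the paper's own proof: first a Sobolev embedding in $\gamma$ to reduce the sup to pointwise $L^p$ moments, then moment bounds for the stochastic integral $\int_0^T g_t\,d\tilde N_t$. The only substantive difference is in the second step. The paper picks an integer $k$ with $2^k\ge p$ and iterates the basic Burkholder--Davis--Gundy bound $E[|M_T|^{2^k}]\lesssim E\big[[M]_T^{2^{k-1}}\big]$, at each stage splitting $dN=\lambda\,dt+d\tilde N$, absorbing the drift term by Jensen, and recursing on the remaining $d\tilde N$ integral until power $2$ is reached, where the It\^o isometry finishes it; this produces the family of moment conditions $(p,q)=(2^k,2^{k-j})$, $j=1,\dots,k$. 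You instead invoke in one shot the inequality $E[|\int_0^T g\,d\tilde N|^p]\lesssim E[(\int_0^T g^2\lambda\,dt)^{p/2}]+E[\int_0^T|g|^p\lambda\,dt]$ and only need $(p,q)=(p,p/2)$ and $(p,1)$, which is cleaner. One caveat: the inequality you write down is \emph{not} the BDG inequality, which compares $|M_T|^p$ with the quadratic variation $[M]_T^{p/2}=(\int_0^T g^2\,dN)^{p/2}$, not with the predictable variation plus a $p$-th jump moment. What you are actually using is the Bichteler--Jacod / Kunita / Burkholder--Rosenthal inequality for purely discontinuous martingales; its standard proof is essentially the dyadic BDG iteration the paper carries out by hand. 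That inequality is classical and citable, so your argument is sound, but the attribution should be corrected. Everything else --- the interchange of $\partial_\gamma$ with the stochastic integral (which the paper also handles by observing that $\sup_{\gamma\in G}|\partial_\gamma f(X,\gamma)|$ is locally bounded), the Jensen estimate $(\int_0^T g^2\lambda\,dt)^{p/2}\le T^{p/2-1}\int_0^T|g|^p\lambda^{p/2}\,dt$, and the final bookkeeping $T^{-p/2}(T^{p/2}+T)\lesssim 1$ for $T>1$ and $p>2$ --- matches the paper.
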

\begin{proof}
	Let $T>1$.	Since the process $\sup_{\gamma \in G}\big|\partial_\gamma f(X, \gamma)\big|$ is locally  bounded,
	we have 
	\beas
	\partial_\gamma\int_0^Tf(X_t, \gamma) d\tilde N_t\yeq \int_0^T\partial_\gamma f(X_t, \gamma) d\tilde N_t\qquad(\gamma \in G).
	\eeas
	Take some constant $p$ with $p> \sf{g}$.
	From  Sobolev's inequality,
	\bea
	E\bigg[\sup_{\gamma \in G}\bigg|\frac{1}{\sqrt T}\int_0^Tf(X_t, \gamma) d\tilde N_t\bigg|\bigg] &\lesssim& E\bigg[\int_{G}\sum_{i=0, 1}\bigg|\frac{1}{\sqrt T}\int_0^T\partial_\gamma^i f(X_t, \gamma)  d\tilde N_t\bigg|^{p}d\gamma\bigg]\nonumber\\
	&\lesssim& \sup_{\gamma \in G, i=0,1}E\bigg[\bigg|\frac{1}{\sqrt T}\int_0^T\partial_\gamma^i f(X_t, \gamma)  d\tilde N_t\bigg|^{p}\bigg]\label{202212180545}.
	\eea
	Take some integer $k$ with $2^k \geq p$. By the Burkholder-Davis-Gundy inequality, for each $i=0, 1$,
	\beas
	E\bigg[\bigg|\frac{1}{\sqrt T}\int_0^T\partial_\gamma^i f(X_t, \gamma)  d\tilde N_t\bigg|^{2^k}\bigg]  &\lesssim& E\bigg[\bigg|\frac{1}{ T}\int_0^T\big|\partial_\gamma^i f(X_t, \gamma)\big|^2  d N_t\bigg|^{2^{k-1}}\bigg]\\
	&\lesssim& 
	E\bigg[\bigg|\frac{1}{ T}\int_0^T\big|\partial_\gamma^i f(X_t, \gamma)\big|^2  \lambda_tdt\bigg|^{2^{k-1}}\bigg]
	+E\bigg[\bigg|\frac{1}{ T}\int_0^T\big|\partial_\gamma^i f(X_t, \gamma)\big|^2  d \tilde N_t\bigg|^{2^{k-1}}\bigg]\\
	&\leq& 
	\sup_{t \geq 0}E\bigg[\bigg|\big|\partial_\gamma^i f(X_t, \gamma)\big|^2  \lambda_t\bigg|^{2^{k-1}}\bigg]
	+E\bigg[\bigg|\frac{1}{ \sqrt T}\int_0^T\big|\partial_\gamma^i f(X_t, \gamma)\big|^2  d \tilde N_t\bigg|^{2^{k-1}}\bigg].
	\eeas
	Repeating this evaluation, 
	\beas
	E\bigg[\bigg|\frac{1}{\sqrt T}\int_0^T\partial_\gamma^i f(X_t, \gamma)  d\tilde N_t\bigg|^{2^k}\bigg]  &\lesssim&
	\sum_{j=1}^{k-1}\sup_{t \geq 0}E\bigg[\bigg|\big|\partial_\gamma^i f(X_t, \gamma)\big|^{2^j}  \lambda_t\bigg|^{2^{k-j}}\bigg]
	+E\bigg[\bigg|\frac{1}{ \sqrt T}\int_0^T\big|\partial_\gamma^i f(X_t, \gamma)\big|^{2^{k-1}}  d \tilde N_t\bigg|^{2}\bigg]\\
	&\lesssim&
	\sum_{j=1}^{k-1}\sup_{t \geq 0}E\bigg[\bigg|\big|\partial_\gamma^i f(X_t, \gamma)\big|^{2^j}  \lambda_t\bigg|^{2^{k-j}}\bigg]
	+E\bigg[\frac{1}{ T}\int_0^T\big|\partial_\gamma^i f(X_t, \gamma)\big|^{2^{k}}  \lambda_t dt\bigg]\\
	&\leq&
	\sum_{j=1}^{k}\sup_{t \geq 0}E\bigg[\bigg|\big|\partial_\gamma^i f(X_t, \gamma)\big|^{2^j}  \lambda_t\bigg|^{2^{k-j}}\bigg].
	\eeas
	Thus, from (\ref{202212180545}),
	\beas
	\sup_{t\geq 0}E\bigg[\sup_{\gamma \in G}\bigg|\frac{1}{\sqrt T}\int_0^Tf(X_t, \gamma) d\tilde N_t\bigg|\bigg] 
	&\lesssim& \sup_{\gamma \in G, i=0,1}\sum_{j=1}^{k}\sup_{t \geq 0}E\bigg[\bigg|\big|\partial_\gamma^i f(X_t, \gamma)\big|^{2^j}  \lambda_t\bigg|^{2^{k-j}}\bigg] < \infty.
	\eeas
\end{proof}

\begin{lemma}\label{ergouniform}
	Assume the same conditions as {\rm Lemma} $\ref{O_P(1)ofNtilde}$. Also assume the ergodicity of $X$ as $(\ref{ergo1})$.  If
	\bea\label{ui-ergouniform}
	\bigg\{\sup_{\gamma \in G}|f(X_t, \gamma)|\bigg\}_{t \geq 0}  ~is~uniformly~integrable, 
	\eea
	then 
	\bea\label{ergouniformform}
	\sup_{\gamma \in G}\bigg|\frac{1}{T}\int_0^T   f(X_t, \gamma)  dt- \int_{\bbR^\sfd} f(x, \gamma) \nu(dx)\bigg|{\rightarrow} ~  0\quad in~L^1(dP)\qquad(T \rightarrow \infty).
	\eea
	In particular, if for any $p\geq 1$, 
	\bea\label{assum-ergouniform}
	\sup_{\gamma \in G, t\geq0}E\bigg[\big|\partial_\gamma^i f(X_t, \gamma)\big|^p\bigg]< \infty\qquad(i=0,1),
	\eea
	then $(\ref{ergouniformform})$ holds.
\end{lemma}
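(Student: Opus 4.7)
The plan is to upgrade the pointwise-in-$\gamma$ ergodicity (\ref{ergo1}) to uniform convergence over $\gamma \in G$ through a truncation plus finite-$\epsilon$-net argument, and then to derive the ``in particular'' consequence via a Sobolev embedding. As a preliminary, set $g(x) := \sup_{\gamma \in G}|f(x,\gamma)|$. By (\ref{ui-ergouniform}) we have $\sup_t E[g(X_t)] < \infty$; applying (\ref{ergo1}) to the bounded truncation $g \wedge K$ (which gives convergence in probability, hence in expectation by bounded convergence) and then letting $K \to \infty$ by monotone convergence yields $\int g\,d\nu \leq \sup_t E[g(X_t)] < \infty$, so $g \in L^1(\nu)$.

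For $M > 0$, set $f_M(x,\gamma) := f(x,\gamma)\,1_{\{g(x) \leq M\}}$. Since $|f-f_M|(x,\gamma) \leq g(x)1_{\{g(x) > M\}}$, the contribution of $f - f_M$ to the expected supremum on the left-hand side of (\ref{ergouniformform}) is bounded by $\sup_t E[g(X_t)1_{\{g(X_t) > M\}}] + \int g\,1_{\{g > M\}}\,d\nu$, which vanishes as $M\to\infty$ by (\ref{ui-ergouniform}) and the integrability of $g$. It therefore suffices to prove uniform $L^1$-convergence for the bounded field $f_M$.

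For fixed $M$ and $\epsilon > 0$, pick a finite $\epsilon$-net $\{\gamma_1,\ldots,\gamma_{N_\epsilon}\} \subset \ol G$ and, for each $\gamma\in G$, a nearest net point $\gamma_{i(\gamma)}$. Writing $\phi_T^M(\gamma)$ for the analogue of the expression inside the supremum in (\ref{ergouniformform}) with $f$ replaced by $f_M$, we split $\sup_\gamma|\phi_T^M(\gamma)| \leq \max_i|\phi_T^M(\gamma_i)| + \sup_\gamma|\phi_T^M(\gamma) - \phi_T^M(\gamma_{i(\gamma)})|$. Each $f_M(\cdot,\gamma_i)$ is bounded, so (\ref{ergo1}) together with bounded convergence yields $\phi_T^M(\gamma_i) \to 0$ in $L^1$, and the finite maximum does likewise. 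For the oscillation term, introduce the modulus $w_M(x,\epsilon) := \sup_{|\gamma-\gamma'|\leq\epsilon}|f_M(x,\gamma) - f_M(x,\gamma')|$; by condition (i) of Lemma~\ref{O_P(1)ofNtilde} and compactness of $\ol G$, $w_M(x,\epsilon)\downarrow 0$ pointwise as $\epsilon\downarrow 0$, while $w_M \leq 2M$. Applying (\ref{ergo1}) to the bounded $w_M(\cdot,\epsilon)$ gives $T^{-1}\int_0^T w_M(X_t,\epsilon)\,dt \to \int w_M(x,\epsilon)\,\nu(dx)$ in $L^1$, and dominated convergence sends the latter integral to $0$ as $\epsilon\to 0$. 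Taking $T\to\infty$, then $\epsilon\to 0$, and finally $M\to\infty$ completes the proof of (\ref{ergouniformform}).

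The ``in particular'' claim reduces to deducing (\ref{ui-ergouniform}) from (\ref{assum-ergouniform}). Picking $p > \sf{g}$ and invoking the Sobolev embedding $W^{1,p}(G) \hookrightarrow L^\infty(G)$, one obtains $\sup_\gamma|f(X_t,\gamma)|^p \lesssim \int_G|f(X_t,\gamma)|^p\,d\gamma + \int_G|\partial_\gamma f(X_t,\gamma)|^p\,d\gamma$; taking expectations and applying Fubini together with (\ref{assum-ergouniform}) bounds $\sup_t E[\sup_\gamma|f(X_t,\gamma)|^p]$ by a finite constant, which (since $p > 1$) yields the required uniform integrability of $\{\sup_\gamma|f(X_t,\gamma)|\}_t$. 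I expect the oscillation step to be the main obstacle: the prior truncation to $f_M$ is essential because (\ref{ergo1}) only applies to bounded integrands and no moment assumption on $\partial_\gamma f$ is available in the main clause, while bounding $f_M$ by $M$ forces $w_M \leq 2M$ and unlocks a direct application of (\ref{ergo1}) to the modulus.
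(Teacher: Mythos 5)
Your proof is correct and follows essentially the same route as the paper: truncate $f$ to a bounded field, control the tail using uniform integrability, handle the bounded part with a finite net plus an oscillation modulus to which (\ref{ergo1}) applies directly, and derive the final clause from a Sobolev embedding. The only cosmetic differences are the form of the truncation ($f\,1_{\{g\leq M\}}$ instead of the paper's $(f\wedge M)\vee(-M)$) and that you isolate the $\nu$-integrability of $g=\sup_\gamma|f(\cdot,\gamma)|$ as a preliminary step rather than establishing it inline.
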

\begin{proof}
	Let $M>0$, and define a bounded function $f_M$ as $f_M(x, \gamma) = \big(f(x, \gamma)\wedge M\big)\vee (-M)$. Then
	\beas
	&&E\bigg[\sup_{\gamma \in G}\bigg|\frac{1}{T}\int_0^T   f(X_t, \gamma)  dt- \int_{\bbR^\sfd} f(x, \gamma) \nu(dx)\bigg|\bigg]\\
	&\leq&
	E\bigg[\sup_{\gamma \in G}\bigg|\frac{1}{T}\int_0^T   f(X_t, \gamma)  dt- \frac{1}{T}\int_0^T   f_M(X_t, \gamma)  dt\bigg|\bigg]
	+\sup_{\gamma \in G}\bigg|\int_{\bbR^\sfd} f(x, \gamma) \nu(dx)- \int_{\bbR^\sfd} f_M(x, \gamma) \nu(dx)\bigg|\\
	&&+E\bigg[\sup_{\gamma \in G}\bigg|\frac{1}{T}\int_0^T   f_M(X_t, \gamma)  dt- \int_{\bbR^\sfd} f_M(x, \gamma) \nu(dx)\bigg|\bigg].
	\eeas
	The first and second terms on the rightmost side are as small as we want by taking sufficiently large $M>0$
	since 
	\beas
	\sup_{\gamma \in G}\bigg| \int_{\bbR^\sfd} f(x, \gamma)1_{\big\{|f(x, \gamma) |\geq M\big\}} \nu(dx)\bigg| 
	&\leq & \int_{\bbR^\sfd}
	\sup_{\gamma\in G}|f(x, \gamma)
	|\, 1_{\big\{\sup_{\gamma \in G}|f(x, \gamma)|
		\geq M\big\}} \nu(dx)
	\\	&=&{\lim_{L \to \infty }\lim_{T \to \infty }E\bigg[ \frac{1}{T}\int_{0}^T
		L \wedge\sup_{\gamma\in G}|f(X_t, \gamma)
		|\, 1_{\big\{\sup_{\gamma \in G}|f(X_t, \gamma)|
			\geq M\big\}} dt\bigg] }
	\\	&\leq &\varlimsup_{T \to \infty }E\bigg[\frac{1}{T}\int_{0}^T
	\sup_{\gamma\in G}|f(X_t, \gamma)
	|\, 1_{\big\{\sup_{\gamma \in G}|f(X_t, \gamma)|
		\geq M\big\}} dt\bigg]
	\\&\leq& \sup_{t \geq 0}E\bigg[
	\sup_{\gamma\in G}|f(X_t, \gamma)
	|\, 1_{\big\{\sup_{\gamma \in G}|f(X_t, \gamma)|
		\geq M\big\}} \bigg].
	\eeas
	and since (\ref{ui-ergouniform}) holds.
	Also, let $\delta>0$ and take a finite set $G_\delta \subset G$  such that $\max_{\gamma_1, \gamma_2 \in G_\delta}|\gamma_1-\gamma_2|<\delta$. Then
	\beas
	&&E\bigg[\sup_{\gamma \in G}\bigg|\frac{1}{T}\int_0^T   f_M(X_t, \gamma)  dt- \int_{\bbR^\sfd} f_M(x, \gamma) \nu(dx)\bigg|\bigg] \\
	&\leq& E\bigg[\max_{\gamma \in G_\delta}\bigg|\frac{1}{T}\int_0^T   f_M(X_t, \gamma)  dt- \int_{\bbR^\sfd} f_M(x, \gamma) \nu(dx)\bigg|\bigg] \\
	&&+E\bigg[\frac{1}{T}\int_0^T  \sup_{\substack{\gamma_1, \gamma_2 \in G\\ |\gamma_1-\gamma_2|< \delta}}\big| f_M(X_t, \gamma_1)-  f_M(X_t, \gamma_2)\big|dt\bigg] +\int_{x\in \bbR^\sfd}  \sup_{\substack{\gamma_1, \gamma_2 \in G\\ |\gamma_1-\gamma_2|< \delta}}\big| f_M(x, \gamma_1)-  f_M(x, \gamma_2)\big|\nu(dx)
	\\&\overset{T \to \infty}{\to}& 2\int_{x\in \bbR^\sfd}  \sup_{\substack{\gamma_1, \gamma_2 \in G\\ |\gamma_1-\gamma_2|< \delta}}\big| f_M(x, \gamma_1)-  f_M(x, \gamma_2)\big|\nu(dx) ~\overset{\delta \to 0}{\to}~0.
	\eeas
	Thus,  (\ref{ergouniformform}) holds.
	
	Also, from  Sobolev's inequality, (\ref{ui-ergouniform}) holds if (\ref{assum-ergouniform}) holds.
\end{proof}

\begin{proof}[Proof of Theorem \ref{point1}]
	We  define a new parameter space $\Xi =\Theta \times \calt$ by
	\beas
	\Theta&=& [0, M_g]\times[-L_\beta, M_\beta]^{|\cala|}\times[0, M_\alpha]^{|\cala|}\times[0, M_\alpha]^{\sfa-|\cala|},\\
	\calt&=&[-L_\beta, M_\beta]^{\sfa-|\cala|},
	\eeas
	and also define new parameters $\theta=(\overline{\theta}, \underline\theta) \in \Theta$ and  $\tau \in \calt$ by
	\beas
	\overline \theta&=&\big(g, \, (\beta_i)_{i \in \cala} \,, \,\big(\alpha_{i})_{i \in\cala}  \big), \label{new1}\\
	\underline\theta&=&(\alpha_{k})_{k \in \cala^c}, \label{new2}\\
	\tau&=&(\beta_k)_{k\in\cala^c} \label{new3}.
	\eeas
	That is, we consider a parameter transformation as $(\theta, \tau)= \varphi(g, \alpha, \beta)$, where $\varphi : [0, M_g] \times [0, M_\alpha]^\sfa\times [-L_\beta, M_\beta]^\sfa \to \Xi$ is defined as
	\bea
	\varphi(g, \alpha, \beta)&=&\big(g, \, (\beta_i)_{i \in \cala} \,, \,\big(\alpha_{i})_{i \in\cala} \,, \, (\alpha_{k})_{k \in \cala^c} \,, \, (\beta_k)_{k\in\cala^c}   \big). \label{varphi}
	\eea
	Define estimators $\hat \theta_T$ and  $\hat \tau_T$ taking values in $\Theta$ and $\calt$, respectively, as
	\beas
	(\hat \theta_T, \hat\tau_T)\yeq \varphi(\hat g_T, \hat \alpha_T, \hat \beta_T).
	\eeas 
	We define $\sfp$ and $\sfp_1$ by $\sfp=1+|\cala|+\sfa$ and $\sfp_1=1+2|\cala|$, respectively. We also define $\calj_1$, $\calj_0$  and $\calj$  by $\calj_1=\{1\} \cup \big\{ 2+|\cala|, ..., \sfp_1\big\}$, $\calj_0=\{\sfp_1+1, ..., \sfp\}$ and $\calj=\calj_1 \cup \calj_0$, respectively.
	We also define one of the true values $\theta^*=(\overline\theta^*, \underline\theta^*) \in \bbR^{\sfp_1}\times \bbR^{\sfp-\sfp_1}$ as 
	\beas
	\overline\theta^*\yeq \big(g^*, \, (\beta_i^*)_{i \in \cala} \,, \,\big(\alpha_{i}^*)_{i \in\cala}  \big)\,, \qquad \underline\theta^*\yeq (0)_{k \in \cala^c}.
	\eeas
	For $\sfr:=\sfp$, let $\frak a_T={\rm diag}(\frak a_{1, T}, ..., \frak a_{\sfr, T})= T^{-\frac{1}{2}}I_\sfr$,\footnote{$I_\sfm$ denotes the $\sfm$-dimensional matrix} $\frak b_T := T$ and  $\rho_{k}=\frac{r}{q}>1$ $(k \in \calj_0)$. We  take $a_T \in GL(\sfp)$ as a deterministic diagonal matrix  defined by \beas
	(a_T)_{jj}=\begin{cases}T^{-\frac{1}{2}}& (j\in \{1, ..., \sfp_1\})\\
		T^{-\frac{r}{2q} }& (j\in \{\sfp_1+1, ..., \sfp\})\end{cases}.
	\eeas
	Define $U_T$ and $U$ by (\ref{U_T}) and (\ref{U}), respectively. Then from Example 2.4 in Yoshida and Yoshida \cite{yoshida2022quasi}, Condition [{\bf A3}] holds, and
	\beas
	U\yeq \bbR\times\bbR^{|\cala|}\times\bbR^{|\cala|}\times[0, \infty)^{\sfa-|\cala|} \subset \bbR^{\sfp}.
	\eeas
	Define $c_i$ $(i \in \calj_1)$ and $d_k$ $(k \in \calj_0)$ as
	\beas
	c_i=\begin{cases}
		\kappa_g1_{\{r=1\}}	& (i=1)\\
		\kappa_\alpha1_{\{r=1\}}	& otherwise
	\end{cases},\qquad d_k=\kappa_\alpha.
	\eeas
	Define a random field $\bbZ$ as 	
	\begin{eqnarray*}
		\bbZ(u)=\overline\Delta[\overline{u}]-\frac{1}{2}\overline \Gamma[{\overline u}^{\otimes2}] -q\sum_{i \in \calj_1}c_i|\theta^*_{i}|^{q-1}u_i-\sum_{k \in \calj_0}d_k|u_k|^q,
	\end{eqnarray*}
	for any $u=(u_1, ..., u_\sfp)\in\bbR^\sfp$, where $\overline u=(u_1, ..., u_{\sfp_1})$ and $\ol\Delta \sim N_{\sfp_1}\big(0, \ol\Gamma\big)$.
	We also define a $U$-valued random variable $\hat{u}$ by  
	\beas
	\hat u\yeq \big(\overline \Gamma^{-1}\overline\Delta^{\dagger}, ~0\big),
	\eeas
	where $0 \in \bbR^{\sfa-\sfb}$.
	Note that $\hat u$  becomes a unique maximizer of $\bbZ$ on $U$, and [{\bf A4}] holds.
	Define a random field $\calh_T : \Omega \times \Xi \to \bbR{\cup \{- \infty\}}$ as 
	\begin{eqnarray*}
		\calh_T( \theta, \tau )&=&\int_0^{T} \log\lambda_t\big(\phi(\theta, \tau)\big)dN_t-  \int_0^{T} \lambda_t\big(\phi(\theta, \tau)\big) dt,
	\end{eqnarray*} where   $\phi$ denotes  $\varphi^{-1}$ for $\varphi$ defined in (\ref{varphi}).
	Then $\hat\theta_T$ is a maximizer of
	\beas
	\calh_T(\theta, \hat\tau_T)-\sum_{j \in \calj}\xi_{j, T}p_{j}(\theta_j)
	\eeas on $\Theta$, where  for any $j \in \calj$,
	\bea
	\xi_{j, T}&=& \begin{cases} \kappa_g T^{\frac{r}{2}} & (j=1) \\
		\kappa_\alpha T^{\frac{r}{2}} & otherwise \end{cases}, \label{xi}\\
	p_{j}(x)&=&|x|^q \qquad(x \in \bbR).\nonumber
	\eea
	Note that  $q_j $ $(j \in \calj)$ defined in (iii) of  Section  \ref{settings} are determined as $q_j =q<1$.
	In the following,  we show [{\bf H1}]-[{\bf H5}] under [{\bf P1}]-[{\bf P3}], and  use 
	Theorem \ref{thmY2}.
	
	We first show [{\bf H1}]. From Taylor's series, for any $(\theta, \tau) \in \Xi$,
	\beas
	&&\calh_T(\theta, \tau)-\calh_T(\theta^*, \tau)
	\\&\leq&  \int_0^T\log\frac{\lambda_t\big(\phi(\theta, \tau)\big)}{\lambda_t^*}dN_t- 
	\int_0^{T} \big\{ \lambda_t\big(\phi(\theta, \tau)\big)-
	\lambda_t^*  \big\}dt,\\
	&=& \int_0^T\frac{\lambda_t\big(\phi(\theta, \tau)\big)-\lambda_t^*}{\lambda_t^*}dN_t
	-\int_0^1 (1-s)	\int_0^T\frac{\big\{\lambda_t\big(\phi(\theta, \tau)\big)-\lambda_t^*\big\}^2}{\big\{s\lambda_t\big(\phi(\theta, \tau)\big)+(1-s)\lambda_t^*\big\}^2}dN_t ds
	\\
	&&- 
	\int_0^{T} \big\{ \lambda_t\big(\phi(\theta, \tau)\big)-
	\lambda_t^*\big\}dt,\\
	&= & \int_0^T\frac{\lambda_t\big(\phi(\theta, \tau)\big)-\lambda_t^*}{\lambda_t^*}d\tilde N_t
	-\int_0^1 (1-s)	\int_0^T\frac{\big\{\lambda_t\big(\phi(\theta, \tau)\big)-\lambda_t^*\big\}^2}{\big\{s\lambda_t\big(\phi(\theta, \tau)\big)+(1-s)\lambda_t^*\big\}^2}dN_t ds,
	\\			
	\eeas
	where $\tilde N$ is a martingale defined by  $\tilde N_t=N_t -\int_{0}^t \lambda_s^*ds$. Take an arbitrary $R>0$. The integrand of the second term in the rightmost side is evaluated as
	for any  $(\theta, \tau) \in \Xi$, $0 \leq s \leq 1$ and  $0\leq t \leq T$, \beas
	\frac{\big\{\lambda_t\big(\phi(\theta, \tau)\big)-\lambda_t^*\big\}^2}{\big\{s\lambda_t\big(\phi(\theta, \tau)\big)+(1-s)\lambda_t^*\big\}^2} &\geq& \frac{\big\{\lambda_t\big(\phi(\theta, \tau)\big)-\lambda_t^*\big\}^2}{M_Rg^*}\varphi_R(X_t)
	\\&\geq& 
	\frac{\big\{\lambda_t\big(\phi(\theta, \tau)\big)-\lambda_t^*\big\}^2}{M_R\lambda_t^*}\varphi_R(X_t) \qquad\big(\because \lambda^*_t \geq g^*\big),
	\eeas
	where $\varphi_R : \bbR^{\sfa} \to [0, 1]$ is a continuous function vanishing outside of $[-R-1, R+1]^{\sfa}$ and satisfying $\varphi_R\equiv1$ on $[-R, R]^{\sfa}$,  and $M_R>0$ is a constant depending on $R$. Therefore,
	\beas
	\calh_T(\theta, \tau)-\calh_T(\theta^*, \tau)\leq \int_0^T\frac{\lambda_t\big(\phi(\theta, \tau)\big)-\lambda_t^*}{\lambda_t^*}d\tilde N_t -\frac{1}{2}	\int_0^T\frac{\big\{\lambda_t\big(\phi(\theta, \tau)\big)-\lambda_t^*\big\}^2}{M_R\lambda_t^*}\varphi_R(X_t)dN_t.
	\eeas
	
	In the following, we denote $\phi(\theta, \tau)$ by  $(g, \alpha, \beta)$. As (\ref{hexample}), we have
	\beas
	\lambda_t\big(\phi(\theta, \tau)\big)-\lambda_t^*=w(\beta, X_t)[h(\theta, \tau)] \qquad(t\geq 0, (\theta, \tau) \in \Xi),
	\eeas
	where $h : \bbR^\sfp \to \bbR^\sfp$ is  a continuous function defined as for any $(\theta, \tau) \in \bbR^\sfp$,
	\bea
	h(\theta, \tau)&=&\bigg( g+\sum_{k \in \cala^c}\alpha_k-g^*, \,\, 
	\big({\alpha_{i}}(\beta_{i}-\beta_{i}^*)\big)_{i\in \cala}\,,\,\, 
	(\alpha_{i}-\alpha_{i}^*)_{i \in \cala}, \,\, (\alpha_{k}\beta_{k})_{k \in \cala^c}\bigg).\label{h}
	\eea 
	Then 
	\beas
	&&\calh_T(\theta, \tau)-\calh_T(\theta^*, \tau)\\
	&\leq & \frac{1}{\sqrt T}\int_0^T\frac{w( \beta, X_t)}{\lambda_t^*}d\tilde N_t\big[\sqrt Th(\theta, \tau)\big]-\frac{1}{2M_RT}	\int_0^T\frac{ w(\beta, X_t)^{\otimes2}}{\lambda_t^*}\varphi_R(X_t)dN_t \big[\big(\sqrt Th(\theta, \tau)\big)^{\otimes2}\big]
	\\			&=&
	K_T(\theta, \tau)\big[\sqrt Th(\theta, \tau)\big]-\frac{1}{2}\big\{G(\theta, \tau)+r_T(\theta, \tau)\big\}\big[\big(\sqrt Th(\theta, \tau)\big)^{\otimes2}\big]
	\qquad(t\geq 0, (\theta, \tau) \in \Xi),
	\eeas
	where for any $(\theta, \tau) \in \Xi$,
	\beas
	K_T(\theta, \tau)&=&\frac{1}{\sqrt T}\int_0^T\frac{w( \beta, X_t)}{\lambda_t^*}d\tilde N_t,\\
	G(\theta, \tau)&=&\frac{1}{M_R}\int_{ \bbR^\sfa}w( \beta, x)^{\otimes2}\varphi_R(x)\nu(dx),\\
	r_T(\theta, \tau)&=&\frac{1}{M_R}\bigg\{\frac{1}{T}\int_0^T\frac{w( \beta, X_t)^{\otimes2}}{\lambda_t^*}\varphi_R(X_t)d\tilde N_t\\
	&&+ \frac{1}{T}\int_0^T w( \beta, X_t)^{\otimes2}\varphi_R(X_t)dt-\int_{\bbR^\sfa}w( \beta, x)^{\otimes2}\varphi_R(x)\nu(dx)\bigg\}.
	\eeas
	Since [{\bf P1}] implies that for any $p, q \geq 1$ with $p \geq 2q $, \beas
	\sup_{\beta \in [-L_\beta, M_\beta]^\sfa, t \geq 0}E\bigg[\bigg|\frac{\partial_\beta^i w( \beta, X_t)}{\lambda_t^*}\bigg|^{p}  (\lambda_t^*)^{q}\bigg] < \infty \qquad(i=0, 1)\qquad\big(\because \lambda_t^* \geq g^*\big),
	\eeas  
	we can use  Lemma \ref{O_P(1)ofNtilde}, and  obtain
	\begin{en-text}
		By Sobolev's inequality and  moment inequalities for martingales, for $p>\sfa$, we have 
		\beas
		&&E\bigg[\sup_{\beta\in[-L_\beta, M_\beta]^\sfa }\bigg|\frac{1}{\sqrt T}\int_0^T\frac{w(\beta, X_t)}{\lambda_t^*}d\tilde N_t\bigg|\bigg]\\
		&\leq& E\bigg[C_1\int_{[-L_\beta, M_\beta]^\sfa }\bigg|\frac{1}{\sqrt T}\int_0^T\frac{w(\beta, X_t)}{\lambda_t^*}d\tilde N_t\bigg|^p+\bigg|\frac{1}{\sqrt T}\int_0^T\frac{\partial_\beta w(\beta, X_t)}{\lambda_t^*}d\tilde N_t\bigg|^pd\beta\bigg]\\
		&\leq& C_2\sup_{t\geq 0, \beta \in [-L_\beta, M_\beta]^\sfa}E\bigg[\bigg|\frac{|w(\beta, X_t)|^2+\big|\partial_\beta w(\beta, X_t)\big|^2}{\lambda_t\big(\phi(\theta^*, \tau)\big)}\bigg|^p\bigg]
		\\&\leq& C_3\sup_{t\geq 0, \beta \in [-L_\beta, M_\beta]^\sfa}E\bigg[|w(\beta, X_t)|^{2p}+\big|\partial_\beta w(\beta, X_t)\big|^{2p}\bigg]
		\\&\leq& C_4 \sup_{t\geq 0, \beta \in [-L_\beta, M_\beta]^\sfa}\bigg(1+\sum_{l=0, 1, 2, j=1, ..., \sfa}E\bigg[\bigg|\frac{d^l}{d\beta_j^l} e^{\beta_j X^j_t}\bigg|^{2p}\bigg] \bigg)
		\\
		&<&\infty  \qquad\big(\because [{\bf P1}]\big),
		\eeas where $C_1, C_2, C_3, C_4$ are positive constants.  
	\end{en-text}
	\bea\label{Ktight}
	\sup_{\theta, \tau) \in \Xi}\big|K_T(\theta, \tau)\big|=O_P(1).
	\eea
	Similarly, $\ds \sup_{\beta\in[-L_\beta, M_\beta]^\sfa }\bigg|\frac{1}{ \sqrt T}\int_0^T\frac{w(\beta, X_t)^{\otimes2 }}{\lambda_t^*}\varphi_R(X_t)d\tilde N_t\bigg|=O_P(1)$.
	Since for any $p>1$,
	\beas
	\sup_{\beta\in[-L_\beta, M_\beta]^\sfa, t\geq0}E\bigg[\bigg|\partial_\gamma^i \big\{w(\beta, X_t)^{\otimes2 }\big\}\varphi_R(X_t)\bigg|^p\bigg]< \infty \qquad(i=0, 1),
	\eeas  
	we can use  Lemma \ref{ergouniform}, and  obtain	\beas
	\frac{1}{T}\int_0^Tw( \beta, X_t)^{\otimes2}\varphi_R(X_t)dt-\int_{\bbR^\sfa}w( \beta, x)^{\otimes2}\varphi_R(x)\nu(dx) \overset{P}{\to} 0,
	\eeas
	uniformly in  $\beta \in  [- L_\beta, M_\beta]^\sfa$ as $T \to \infty$.  Therefore, 
	\beas
	\sup_{(\theta, \tau) \in \Xi}\big|r_T(\theta, \tau)\big|=o_P(1).
	\eeas 
	Moreover, [{\bf P2}] implies the non-degeneracy of $G$ for sufficiently large $R$. Thus, [{\bf H1}] holds for $\frak a_T={T^{-\frac{1}{2}}I_\sfr}$.
	
	
	[{\bf H2}]  also holds. {Indeed, continuing to denote $\phi(\theta, \tau)$ by $(g, \alpha, \beta)$,  we have 
		\beas
		\Theta^*&=&\{\theta \in \Theta ; \, \exists \tau \in \calt,  h(\theta, \tau)=0\}\\
		&=&\bigg\{\theta \in \Theta ; \, g+\sum_{k\in \cala^c}\alpha_{k}=g^*,~ \big(\beta_i -\beta_{i}^*\big)_{i \in \cala}=0, ~
		\big(\alpha_{i}-\alpha_{i}^*\big)_{i \in \cala}=0\bigg\},
		\eeas
		where $h$ is given by (\ref{h}). Therefore, $\Theta^* \cap \{\underline \theta=0\}=\Theta^* \cap \{\alpha_{k}=0 ~(k \in \calj_0)\}=\{\theta^*\}$. Condition {\bf (a)} of [{\bf H2}] obviously holds since $h$ can be smoothly extended on $\bbR^\sfp$. Condition {\bf (b)}  of [{\bf H2}] holds since $\alpha_i>2^{-1}\alpha_i^*>0$ $(i \in \cala)$ if $\theta$ is close to $\theta^*$.
	}
	
	We show [{\bf H3}]. From Remark \ref{remid}, we only need to show $[{\bf H3}]^{\prime}$. \begin{en-text}Since $h$ is given by (\ref{h}), we have
		\beas
		\Theta^*&:=&\{\theta \in \Theta ; \, \exists \tau \in \calt,  h(\theta, \tau)=0\}\\
		&=&\bigg\{\theta \in \Theta ; \, g+\sum_{k\in \cala^c}\alpha_{k}=g^*,~ \big(\beta_i -\beta_{i}^*\big)_{i \in \cala}=0, ~
		\big(\alpha_{k}-\alpha_{k}^*\big)_{k \in \cala^c}=0\bigg\}.
		\eeas
	\end{en-text}
	Since $\kappa_g<\kappa_\alpha$ from [{\bf P3}], for any $\theta \in \Theta^*$,
	\beas
	\kappa_g|g|^q+\kappa_\alpha\sum_{j =1}^{\sfa}|\alpha_j|^q	&=& \kappa_g\bigg|g^*-\sum_{k\in \cala^c}\alpha_{k}\bigg|^q+\kappa_\alpha\sum_{k \in \cala^c}|\alpha_{k}|^q+\kappa_\alpha\sum_{i\in\cala}|\alpha^*_{i}|^q\\
	&\geq&\kappa_g|g^*|^q-\kappa_g\sum_{k \in \cala^c}|\alpha_{k}|^q+\kappa_\alpha\sum_{k \in \cala^c}|\alpha_{k}|^q+\kappa_\alpha\sum_{i\in \cala}|\alpha^*_{i}|^q\\
	&\geq&\kappa_g|g^*|^q+\kappa_\alpha\sum_{i \in \cala}|\alpha^*_{i}|^q,
	\eeas
	where the equations hold if and only if $\alpha_{k}=0$ $(k \in \cala^c)$.
	Therefore, under $\theta \in \Theta^*$, the penalty term is minimized if and only if  $\alpha_{k}=0$ $(k \in \cala^c)$, that is, if  and only if 
	$\theta=\theta^*$. Thus, $[{\bf H3}]^{\prime}$ holds.

	
	Condition [{\bf H4}] also holds for $\xi_{j, T}$ defined in (\ref{xi}) since $\kappa_\alpha>0$ {and $q_j=q<1$ $(j \in \calj)$}.
	Finally, we show [{\bf H5}]. Let $\calg=\{\phi, \Omega\}$, and take  $\caln$ as
	\beas
	\caln \yeq {\rm Int}(\Theta)\cap \big\{(\theta_1, ..., \theta_\sfp) \in \bbR^\sfp ; \theta_1 > 2^{-1}g^*\big\}.
	\eeas 
		Note that {$\caln$ satisfies (\ref{caln1}) and (\ref{caln2})} and that for any $(\theta, \tau) \in \ol \caln \times \calt$ and any $t\geq 0$, \bea\label{g*/2}
		\lambda_t\big(\phi(\theta, \tau)\big) \geq  2^{-1}g^*.
		\eea 
	For any $(\theta, \tau)\in \ol\caln \times \calt$,
	\beas
	\partial_\theta\calh_T(\theta^*, \tau)&=&\bigg(\int_{0}^T\frac{v(X_t)}{g^*+\sum_{i \in \cala}\alpha_i^*e^{-\beta_i^*X^i_t}}d\tilde N_t,  \int_{0}^T\frac{\big(e^{\tau_k X_t^{k}}\big)_{k \in \cala^c}}{g^*+\sum_{i \in \cala}\alpha_i^*e^{-\beta_i^*X^i_t}}d\tilde N_t,\bigg)^\prime.
	\eeas
	Similarly as before, from   [{\bf P1}] and Lemma \ref{O_P(1)ofNtilde},
	\beas
	\sup_{\tau \in\calt}\bigg|\frac{1}{\sqrt T}\int_{0}^T\frac{\big(e^{\tau_k X_t^{k}}\big)_{k \in \cala^c}}{g^*+\sum_{i \in \cala}\alpha_i^*e^{-\beta_i^*X^i_t}}d\tilde N_t\bigg|=O_P(1).
	\eeas
	Moreover, from the martingale central limit theorem, we obtain
	\beas
	\frac{1}{\sqrt{T}}\int_{0}^T\frac{v(X_t)}{g^*+\sum_{i \in \cala}\alpha_i^*e^{-\beta_i^*X^i_t}}d\tilde N_t \overset{d}{\to} \overline\Delta,
	\eeas
	{checking Lindeberg's condition as  for $S_t=S_t(T) :=\frac{1}{\sqrt{T}}\int_{0}^t\frac{v(X_s)}{g^*+\sum_{i \in \cala}\alpha_i^*e^{-\beta_i^*X^i_s}}d\tilde N_s$ and  for any $a>0$,
		\beas
		E\bigg[\sum_{t \leq T} \big(\Delta S_t\big)^21_{\{|\Delta S_t|>a\}}\bigg] &\leq&a^{-1}E\bigg[\sum_{t \leq T} \big|\Delta S_t\big|^3\bigg]
		\\&\leq&a^{-1}E\bigg[\int_0^T \bigg|\frac{1}{\sqrt T}\frac{v(X_t)}{\lambda_t^*}\bigg|^3 dN_t\bigg]
		\\&\leq &a^{-1}T^{-\frac{1}{2}}\sup_{t\geq 0}E\bigg[\bigg|\frac{v(X_t)}{\lambda_t^*}\bigg|^3 \lambda^*_t\bigg]  ~\to~ 0 \qquad(T\to \infty). 
		\eeas
	}
	Thus, the first half of the argument of [{\bf X5}] holds.
	Also, for any $(\theta, \tau)\in \ol \caln \times \calt$,
	\beas
	\partial_\theta^2\calh_T(\theta, \tau)&=&-\int_{0}^T\frac{\partial_\theta\lambda_t\big(\phi(\theta, \tau)\big)^{\otimes2}}{\lambda_t\big(\phi(\theta, \tau)\big)^2}dN_t+\int_{0}^T\frac{\partial_\theta^2\lambda_t\big(\phi(\theta, \tau)\big)}{\lambda_t\big(\phi(\theta, \tau)\big)}dN_t\\
	&&-\int_{0}^T\partial_\theta^2\lambda_t\big(\phi(\theta, \tau)\big)dt\\
	&=&-\int_{0}^T\frac{\partial_\theta\lambda_t\big(\phi(\theta, \tau)\big)^{\otimes2}}{\lambda_t\big(\phi(\theta, \tau)\big)^2}\lambda_t\big(\phi(\theta^*, \tau)\big)dt-\int_{0}^T\frac{\partial_\theta\lambda_t\big(\phi(\theta, \tau)\big)^{\otimes2}}{\lambda_t\big(\phi(\theta, \tau)\big)^2}d\tilde N_t\\
	&&+\int_{0}^T\frac{\partial_\theta^2\lambda_t\big(\phi(\theta, \tau)\big)}{\lambda_t\big(\phi(\theta, \tau)\big)}d\tilde N_t+\int_{0}^T\frac{\partial_\theta^2\lambda_t\big(\phi(\theta, \tau)\big)}{\lambda_t\big(\phi(\theta, \tau)\big)}\lambda_t\big(\phi(\theta^*, \tau)\big)dt\\
	&&-\int_{0}^T\partial_\theta^2\lambda_t\big(\phi(\theta, \tau)\big)dt.
	\eeas
	Similarly as before,  {\colorr from  [{\bf P1}] and (\ref{g*/2})}, we can use Lemma \ref{O_P(1)ofNtilde}, and obtain 
	\beas
	\sup_{(\theta, \tau) \in \ol \caln \times \calt}\bigg|-\frac{1}{\sqrt T}\int_{0}^T\frac{\partial_\theta\lambda_t\big(\phi(\theta, \tau)\big)^{\otimes2}}{\lambda_t\big(\phi(\theta, \tau)\big)^2}d\tilde N_t+\frac{1}{\sqrt T}\int_{0}^T\frac{\partial_\theta^2\lambda_t\big(\phi(\theta, \tau)\big)}{\lambda_t\big(\phi(\theta, \tau)\big)}d\tilde N_t\bigg| =O_P(1).
	\eeas
	Therefore, for any $(\theta, \tau)\in \ol \caln \times \calt$,
	\beas
	\frac{1}{T}\partial_\theta^2\calh_T(\theta, \tau)
	&=&-\frac{1}{T}\int_{0}^T\frac{\partial_\theta\lambda_t\big(\phi(\theta, \tau)\big)^{\otimes2}}{\lambda_t\big(\phi(\theta, \tau)\big)^2}\lambda_t\big(\phi(\theta^*, \tau)\big)dt+\frac{1}{T}\int_{0}^T\frac{\partial_\theta^2\lambda_t\big(\phi(\theta, \tau)\big)}{\lambda_t\big(\phi(\theta, \tau)\big)}\lambda_t\big(\phi(\theta^*, \tau)\big)dt\\
	&&-\frac{1}{T}\int_{0}^T\partial_\theta^2\lambda_t\big(\phi(\theta, \tau)\big)dt+o_P(1).
	\eeas
	From  [{\bf P1}] and (\ref{g*/2}),    we can apply Lemma \ref{ergouniform} to 
	\beas
	f(\gamma, X_t) \yeq \frac{\partial_\theta\lambda_t\big(\phi(\theta, \tau)\big)^{\otimes2}}{\lambda_t\big(\phi(\theta, \tau)\big)^2}\lambda_t\big(\phi(\theta^*, \tau)\big)+\frac{\partial_\theta^2\lambda_t\big(\phi(\theta, \tau)\big)}{\lambda_t\big(\phi(\theta, \tau)\big)}\lambda_t\big(\phi(\theta^*, \tau)\big)
	-\partial_\theta^2\lambda_t\big(\phi(\theta, \tau)\big),
	\eeas  where $\gamma=(\theta, \tau) \in  \ol \caln \times \calt$.
	Therefore,  
	for any $R>0$,
	\beas
	&&\sup_{\substack{(\theta, \tau)\in \ol\caln\times\calt\\ |a_T^{-1}(\theta-\theta^*)|\leq R}}\bigg|a_T^\prime\partial^2_\theta\calh_T(\theta, \tau)a_T+ \begin{pmatrix}
		\overline \Gamma(\theta^*) & O\\
		O & O
	\end{pmatrix} \bigg|\overset{P}{\rightarrow}0.
\end{eqnarray*}
Thus,  the second half of the argument of [{\bf H5}] also holds. Then using Theorem \ref{thmY2}, we have
\beas
a_T^{-1}(\hat\theta_T-\theta^*) \overset{d}{\to} \hat u.
\eeas
This implies (\ref{weakcon1inex1}) and (\ref{weakcon2ex1}).

Also, we see easily that [{\bf S}] holds from Example  \ref{exeasysparse}. Therefore, using Theorem \ref{Selection2}, we have 
\beas
\lim_{T\to \infty}P\big[\hat \alpha_{k, T}=0 ~(k \in \cala^c)\big] =1.
\eeas

\end{proof}

\begin{proof}[Proof of Theorem \ref{point2}]
From (\ref{Gnonde2}), we can take some large $L>0$ such that the following matrix is non-degenerate:
\bea\label{Rlarge}
\int_{ [0, \infty)^\sfa}\big\{( x_j)_{j \in D}\big\}^{\otimes2}\varphi_L(x)\,\big\{\ftrue\cdot x 1_{\{\ftrue\cdot x >0\}}+1_{\{\ftrue \cdot x =0\}}\big\}\nu(dx),
\eea
where $\varphi_L : \bbR^{\sfa} \to [0, 1]$ is a continuous function vanishing outside of $[-L-1, L+1]^{\sfa}$ and satisfying $\varphi_L\equiv1$ on $[-L, L]^{\sfa}$.
Choose some constant $M_L>1$ satisfying that
\bea\label{ML}
\lambda_t(\alpha)\varphi_L(X_t) \leq M_L  \qquad\big(t\geq 0, \, \alpha \in [0, M_\alpha]^\sfa\big).
\eea
We define  an estimating function $\widetilde{\mathbb{\Psi}}_T$  as
\beas
\widetilde{\mathbb{\Psi}}_T(\alpha)  \yeq  \mathbb{\Psi}_T(\alpha) + \int_0^T\bigg\{\lambda_t(\alpha)-\frac{\{\lambda_t(\alpha)\}^2}{2M_L}\varphi_L(X_t)\bigg\}1_{\{\lambda^*_t=0\}}dt \qquad\big( \alpha \in [0, M_\alpha]^\sfa\big).
\eeas
Note that $\widetilde{\mathbb{\Psi}}_T \geq \mathbb{\Psi}_T$.
For each $T$, let $\widetilde\alpha_T=(\widetilde\alpha_{1, T}, ..., \widetilde\alpha_{\sfa, T})$ be an arbitrary  $[0, M_\alpha]^\sfa$-valued random variable that asymptotically maximizes $\widetilde{\mathbb{\Psi}}_T(\alpha)$.

In order to show Theorem \ref{point2}, it is sufficient to show that under $[{\bf L1}]$ and $[{\bf L2}]$,
\bea\label{suffices1}
\big(T^{\frac{1}{2}}(\widetilde \alpha_{i, T}-
\true_{i})_{i \in \calj_1}, T^{\frac{r}{2q}}(\widetilde\alpha_{k, T})_{k \in \calj_0}\big)\overset{d}{\to} \big(\overline\Gamma^{-1}\overline\Delta^{\dagger}, 0\big)
\eea
and
\bea\label{suffices2}
\lim_{T\to \infty}P\big[\widetilde\alpha_{k, T}=0 ~~(k \in \calj_0), ~~\widetilde\alpha_{i, T}\neq 0 ~~(i \in \calj_1)\big]=1.
\eea
In fact, assume (\ref{suffices1}) and  (\ref{suffices2})  for any asymptotic maximizer $\widetilde \alpha_T$ of $\widetilde{\mathbb{\Psi}}_T$. Then   since $\lambda^*_t=\sum_{i \in \calj_1}\true_i X_t^i~a.s.$ and therefore $1_{\{\lambda^*_t=0\}}=1_{\{ X_t^i=0 ~(i \in \calj_1)\}}~a.s.$,
we have
\beas
P\big[\widetilde{\mathbb{\Psi}}_T(\widetilde\alpha_T)  =  \mathbb{\Psi}_T(\widetilde\alpha_T) \big] &=& P\bigg[\int_0^T\bigg\{\lambda_t(\widetilde\alpha_T)-\frac{\{\lambda_t(\widetilde\alpha_T)\}^2}{2M_L}\varphi_L(X_t)\bigg\}1_{\big\{X_t^i=0~(i \in \calj_1)\big\}}dt~=0\bigg]\\
&\geq& P\big[\widetilde\alpha_{k, T}=0 ~~(k \in \calj_0)\big] \to 1 \quad(T\to \infty) \qquad\big(\because (\ref{suffices2})\big).
\eeas
Therefore, the following evaluation  asymptotically holds:
\beas
\widetilde{\mathbb{\Psi}}_T(\hat \alpha_T)  ~\leq~ \widetilde{\mathbb{\Psi}}_T(\widetilde\alpha_T) \yeq {\mathbb{\Psi}}_T(\widetilde\alpha_T)  ~\leq~ \mathbb{\Psi}_T(\hat \alpha_T).
\eeas 
Thus, together with the inequality $\widetilde{\mathbb{\Psi}}_T \geq \mathbb{\Psi}_T$, 
$\widetilde{\mathbb{\Psi}}_T(\hat \alpha_T)$ is asymptotically equal to $\widetilde{\mathbb{\Psi}}_T(\widetilde\alpha_T)$. This means that $\hat \alpha_T$ also asymptotically maximizes $\widetilde{\mathbb{\Psi}}_T$. Then (\ref{suffices1}) and  (\ref{suffices2})  holds when substituting  $\hat \alpha_T$ for $\widetilde\alpha_T$. Therefore, Theorem \ref{point2} holds.

In the following, we show (\ref{suffices1}) and (\ref{suffices2}) under [{\bf L1}] and [{\bf L2}]. We  consider two parameter spaces $\Theta$ and $\calt$ by
\beas
\Theta= [0, M_\alpha]^\sfa,~~\calt=\{1\},
\eeas
and we consider new parameters $\theta \in \Theta$ and  $\tau \in \calt$ by
\beas
\theta=\alpha, ~~\tau=1.
\eeas 
{Define estimators $\hat \theta_T$ and  $\hat \tau_T$ taking values in $\Theta$ and $\calt$, respectively, as
$\hat \theta_T=\widetilde \alpha_T, ~\hat\tau_T= 1$. In the following, we omit $\tau$.}
Also, define  $\theta^* \in \Theta$ as $\theta^*=\true$.
We take $\sfp$  and $\sfp_1$ as  $\sfp=\sfa$ and $\sfp_1=|\calj_1|$, respectively.
For notational simplicity,  assume that $\calj_1=\{1, ..., \sfp_1\}$ and $\calj_0 = \{\sfp_1+1, ..., \sfp\}$.
Define $\frak a_T={\rm diag}(\frak a_{1, T}, ..., \frak a_{\sfr, T})$ as $\frak a_T= T^{-\frac{1}{2}}I_{\sfr}$.  ($\sfr $ is  already defined as $\sfr=|D|$ in Section \ref{pointprocesswithmulticolinear}.)
Let $\frak b_{T}:=T$ and  $\rho_{k}:=\frac{r}{q}>1$ $(k\in\calj_0)$, and take $a_T \in GL(\sfp)$ as a deterministic diagonal matrix  defined by \bea\label{aTex2}
(a_T)_{jj}=\left\{\begin{array}{ll}T^{-\frac{1}{2}} &\big(j \in \calj_1=\{1, ..., \sfp\}\setminus \calj_0 \big)\\  T^{-\frac{r}{2q}}&(j \in \calj_0)\end{array}\right..
\eea
Define $U_T$ and $U$ by (\ref{U_T}) and (\ref{U}), respectively. Then from Example 2.4 in Yoshida and Yoshida \cite{yoshida2022quasi},  Condition [{\bf A3}] holds, and
\beas
U\yeq \big\{u=(u_1, ..., u_\sfp) \in \bbR^\sfp  ; u_k \geq 0 ~(k \in \calj_0)\big\}.
\eeas
Define $c_i \in \bbR$ $(i \in \calj_1)$ and $d_k \in \bbR$ $(k \in \calj_0)$ as
\beas
c_i=\kappa_i1_{\{r=1\}},~~ d_k=\kappa_k,
\eeas
respectively. Define a random field $\bbZ$ as 	
\begin{eqnarray*}
\bbZ(u)=\overline\Delta[(u_i)_{i \in \calj_1}]-\frac{1}{2}\overline \Gamma\big[\big((u_i)_{i \in \calj_1}\big)^{\otimes2}\big] -q\sum_{i \in \calj_1}c_i|\true_{i}|^{q-1}u_i-\sum_{k \in \calj_0}d_k|u_k|^q,
\end{eqnarray*}
for any $u=(u_1, ..., u_\sfp)\in\bbR^\sfp$.
We also define a $U$-valued random variable $\hat{u}$ by  
\beas
\hat u\yeq \big(\overline \Gamma^{-1}\overline\Delta^{\dagger}, ~0\big),
\eeas
Note that with probability $1$, $\hat u$  becomes a unique maximizer of $\bbZ$ on $U$, and [{\bf A4}] holds.
Define a random field $\calh_T : \Omega \times \Theta \to \bbR \cup \{-\infty\}$ as 
\begin{eqnarray*}
\calh_T( \alpha)&=&\int_0^{T} \log\lambda_t(\alpha)dN_t-  \int_0^{T} \lambda_t(\alpha) dt+\int_0^T\bigg\{\lambda_t(\alpha)-\frac{\{\lambda_t(\alpha)\}^2}{2M_L}\varphi_L(X_t)\bigg\}1_{\{\lambda^*_t=0\}}dt. \\
\end{eqnarray*}
Then  the estimation function $\widetilde{\mathbb{\Psi}}_T$ can be expressed as
\beas
\widetilde{\mathbb{\Psi}}_T(\alpha)  &=& \calh_T(\alpha)  -T^{\frac{r}{2}} \sum_{j=1}^\sfa \kappa_j\alpha_j^q\\
&=&\calh_T(\alpha)-\sum_{j =1}^\sfp\xi_{j, T}p_{j}(\alpha_j) \qquad(\alpha \in \Theta),
\eeas
where  for any $j  \in \calj := \calj_1\cup\calj_0=\{1, ..., \sfp\}$,
\bea
\xi_{j, T}&=&  \kappa_j T^{\frac{r}{2}}, \label{xiex2}\\
p_{j}(x)&=&|x|^q \qquad(x \in \bbR).\nonumber
\eea
Note that $q_j$ $(j \in \calj)$  defined in (iii) of Section \ref{settings} are determined as $q_j = q < 1$.

From Theorem \ref{thmY2}, if [{\bf H1}]-[{\bf H5}] holds for $\widetilde{\mathbb{\Psi} }_T$ under ${\bf [L1]}$ and  ${\bf [L2]}$, then 
\beas
a_T^{-1} (\widetilde \alpha_T-\true)\overset{d}{\to}		 	\hat u,
\eeas
i.e. (\ref{suffices1}) holds.
Therefore, in the following, we show [{\bf H1}]-[{\bf H5}]  under [{\bf L1}] and [{\bf L2}].

We first show [{\bf H1}]. 
Since  $\int_0^T 1_{\{\lambda^*_t =0\}}dN_t =0 ~a.s.$,  we  have $\int_0^{T} \log\lambda_t(\alpha)dN_t= \int_0^{T} \log\lambda_t(\alpha)1_{\{\lambda^*_t>0\}}dN_t~a.s.$.
Then from Taylor's series,
\beas
&&\calh_T(\alpha)-\calh_T(\true)\\
&=& \int_0^T\log\lambda_t(\alpha)1_{\{\lambda^*_t >0\}}dN_t- 
\int_0^{T}  \lambda_t(\alpha)dt  +\int_0^T\bigg\{\lambda_t(\alpha)-\frac{\{\lambda_t(\alpha)\}^2}{2M_L}\varphi_L(X_t)\bigg\}1_{\{\lambda^*_t=0\}}dt \\
&&-\int_0^T\log\lambda_t^*1_{\{\lambda^*_t >0\}}dN_t+ 
\int_0^{T}  \lambda_t^*dt 
\\&=& \int_0^T\log\frac{\lambda_t(\alpha)}{\lambda_t^*}1_{\{\lambda^*_t >0\}}dN_t- 
\int_0^{T} \big\{ \lambda_t(\alpha)-
\lambda_t^*  \big\}1_{\{\lambda^*_t >0\}}dt-\int_0^T\frac{\{\lambda_t(\alpha)\}^2}{2M_L}\varphi_L(X_t)1_{\{\lambda^*_t=0\}}dt \\
&=& \int_0^T\frac{\lambda_t(\alpha)-\lambda_t^*}{\lambda_t^*}1_{\{\lambda^*_t >0\}}dN_t
-\int_0^1 (1-s)	\int_0^T\frac{\big\{\lambda_t(\alpha)-\lambda_t^*\big\}^2}{\big\{s\lambda_t(\alpha)+(1-s)\lambda_t^*\big\}^2}1_{\{\lambda^*_t >0\}}dN_t ds
\\
&&- 
\int_0^{T} \big\{ \lambda_t(\alpha)-
\lambda_t^*  \big\}1_{\{\lambda^*_t >0\}}dt-\int_0^T\frac{\{\lambda_t(\alpha)\}^2}{2M_L}\varphi_L(X_t)1_{\{\lambda^*_t=0\}}dt \\
&=& \int_0^T\frac{\lambda_t(\alpha)-\lambda_t^*}{\lambda_t^*}1_{\{\lambda^*_t >0\}}d\tilde N_t
-\int_0^1 (1-s)	\int_0^T\frac{\big\{\lambda_t(\alpha)-\lambda_t^*\big\}^2}{\big\{s\lambda_t(\alpha)+(1-s)\lambda_t^*\big\}^2}1_{\{\lambda^*_t >0\}}dN_t ds\\
&&-\int_0^T\frac{\{\lambda_t(\alpha)\}^2}{2M_L}\varphi_L(X_t)1_{\{\lambda^*_t=0\}}dt.
\eeas
where $\tilde N$ is a martingale defined by  $\tilde N_t=N_t -\int_{0}^t \lambda_s^*ds$.
The integrand of the second term in the rightmost side is evaluated as
for any  $\alpha \in \Theta$, $0 \leq s \leq 1$ and  $0\leq t \leq T$, \beas
\frac{\big\{\lambda_t(\alpha)-\lambda_t^*\big\}^2}{\big\{s\lambda_t(\alpha)+(1-s)\lambda_t^*\big\}^2}1_{\{\lambda^*_t >0\}} &\geq& \frac{\big\{\lambda_t(\alpha)-\lambda_t^*\big\}^2}{M_L^2}\varphi_L(X_t)1_{\{\lambda^*_t >0\}}\qquad\big(\because (\ref{ML})\big).
\eeas
Therefore, noting that $M_L^2 > M_L$, we have
\beas
&&\calh_T(\alpha)-\calh_T(\ftrue)\\
&\leq&\int_0^T\frac{\lambda_t(\alpha)-\lambda_t^*}{\lambda_t^*}1_{\{\lambda^*_t >0\}} d\tilde N_t -\frac{1}{2M_L^2}	\int_0^T\big\{\lambda_t(\alpha)-\lambda_t^*\big\}^2\varphi_L(X_t)1_{\{\lambda^*_t >0\}}dN_t
\\
&&-\int_0^T\frac{\{\lambda_t(\alpha)\}^2}{2M_L^2}\varphi_L(X_t)1_{\{\lambda^*_t=0\}}dt
\\
&=& \int_0^T\frac{\lambda_t(\alpha)-\lambda_t^*}{\lambda_t^*}1_{\{\lambda^*_t >0\}} d\tilde N_t -\frac{1}{2M_L^2}
\int_0^T\big\{\lambda_t(\alpha)-\lambda_t^*\big\}^2\varphi_L(X_t)\big\{1_{\{\lambda^*_t >0\}}dN_t+1_{\{\lambda^*_t =0\}}dt\big\}.
\eeas

From (\ref{lambdaforh}), we have  
\beas
\lambda_t(\alpha)-\lambda_t^*=(X_t^j)_{j \in D}[h(\alpha)] \qquad(t\geq 0, \alpha \in [0, M_\alpha]^\sfa), 
\eeas 
where $h=(h_1, ..., h_\sfr) : \Theta  \to \bbR^{\sfr}$ is  a continuous function defined by
\bea
h(\alpha)=(\alpha-\ftrue)A=(\alpha-\true)A \qquad(\alpha \in [0, M_\alpha]^\sfa).\label{h2}
\eea 
Then 
\beas
&&\calh_T(\alpha)-\calh_T(\alpha^*)\\
&\leq & \frac{1}{\sqrt T}\int_0^T\frac{( X_t^j)_{j \in D}}{\lambda_t^*}{ 1_{\{\lambda^*_t >0\}}}d\tilde N_t\big[\sqrt Th(\alpha)\big]-\frac{1}{2M_L^2T}	\int_0^T\big\{( X_t^j)_{j \in D}\big\}^{\otimes2} \varphi_L(X_t)\\&&\cdot \big\{1_{\{\lambda^*_t >0\}}dN_t +1_{\{\lambda^*_t =0\}}dt\big\}\big[\big(\sqrt Th(\alpha)\big)^{\otimes2}\big]
\\			&=&
K_T\big[\sqrt{T} h(\alpha)\big]-\frac{1}{2}\big\{G+r_T\big\}\big[\big(\sqrt Th(\alpha)\big)^{\otimes2}\big]
\qquad(t\geq 0, \alpha \in \Theta),
\eeas
where  
\beas
&& K_T\yeq \frac{1}{\sqrt T}\int_0^T\frac{( X_t^j)_{j \in D}}{\lambda_t^*}{ 1_{\{\lambda^*_t >0\}}}d\tilde N_t,\\
&& G \yeq  \frac{1}{M_L^2}\int_{ [0, \infty)^\sfa}\big\{( x_j)_{j \in D}\big\}^{\otimes2}\varphi_L(x)\,\big\{\ftrue\cdot x \, 1_{\{\ftrue\cdot x >0\}}+1_{\{\ftrue \cdot x =0\}}\big\}\nu(dx),
\\&& r_ T\yeq \frac{1}{M_L^2T}\int_0^T{\big\{( X_t^j)_{j \in D}\big\}^{\otimes2}}\varphi_L(X_t)1_{\{\lambda^*_t >0\}}d\tilde N_t\\
&& \qquad + \frac{1}{M_L^2T}\int_0^T {\big\{( X_t^j)_{j \in D}\big\}^{\otimes2}}\varphi_L(X_t)\,\big\{\lambda^*_t1_{\{\lambda^*_t >0\}}+1_{\{\lambda^*_t =0\}}\big\}dt-G.
\eeas 
Since from [{\bf L2}],
\beas
\sup_{t \geq 0}E\big[|K_T|^2\big]&=&\sup_{t \geq 0}E\bigg[\frac{1}{T}\int_{0}^T\bigg|\frac{( X_t^j)_{j \in D}}{\lambda_t^*}\bigg|^{2}  \lambda_t^*{1_{\{\lambda^*_t >0\}}}dt\bigg]  
\\&\leq&\sup_{t \geq 0}E\bigg[\bigg|\frac{( X_t^j)_{j \in D}}{\lambda_t^*}\bigg|^{2}  \lambda_t^*{1_{\{\lambda^*_t >0\}}}dt\bigg] \\
&=&\int_{[0, \infty)^\sfa}\frac{\big|( x_j)_{j \in D}\big|^2}{{\alpha^*\cdot x}}  { 1_{\{\alpha^*\cdot x >0\}}}\nu(dx)~<~ \infty,
\eeas 
we  obtain $K_T=O_P(1)$.  
Similarly,
\beas
\frac{1}{M_L^2T}\int_0^T{\big\{( X_t^j)_{j \in D}\big\}^{\otimes2}}\varphi_L(X_t)1_{\{\lambda^*_t >0\}}d\tilde N_t\yeq o_P(1).
\eeas
From the ergodicity, we have
\beas
\frac{1}{M_L^2T}\int_0^T {\big\{( X_t^j)_{j \in D}\big\}^{\otimes2}}\varphi_L(X_t)\,\big\{\lambda^*_t1_{\{\lambda^*_t >0\}}+1_{\{\lambda^*_t =0\}}\big\}-G\overset{P}{\to} 0.
\eeas
Therefore, 
\beas
r_T=o_P(1).
\eeas
Moreover,  (\ref{Rlarge})  implies the non-degeneracy of $G$.  Thus, [{\bf H1}] holds.


Second, we show [{\bf H2}].  
{From Lemma \ref{lemmafinal} described below, ${\rm
	Ker}A\cap\left<\{e_j\}_{j \in \calj_1}\right> =\{0\}$.  Therefore, for any $\alpha \in \Theta$ with $\alpha_k=0$ $(k \in \calj_0)$,
\beas
|h(\alpha)|&=&|(\alpha-\true)A|\yeq \bigg|\bigg(\sum_{j \in \calj_1}\alpha_je_j-\true\bigg)A\bigg|\\
&\geq& \epsilon_0|(\alpha_j)_{j \in \calj_1}-(\true_j)_{j \in \calj_1}|,
\eeas
where $\epsilon_0$ is some positive constant. Therefore,  {\bf (a)} and  {\bf(b)} of [{\bf H2}] hold,  and  $\Theta^*\cap\{\underline\theta=0\}=\{\alpha \in \Theta ; h(\alpha)=0\}\cap\{\alpha_k=0~(k \in \calj_0)\}=\{\theta^*\}$. Thus, [{\bf H2}] holds.
}

From [{\bf L1}],  Condition $[{\bf H3}]^\prime$ holds for $\Theta^*=\{\alpha \in \Theta ; h(\alpha)=0\}=\{\ftrue + {\rm Ker}(A)\} \cap [0, M_\alpha]^\sfa$, which implies [{\bf H3}]. 
Condition [{\bf H4}] obviously holds for $\xi_{j, T}$ defined in (\ref{xiex2}) {since $\kappa_j>0$ {and $q_j=q<1$} $(j=1, ..., \sfa)$. }

Finally, we show [{\bf H5}]. Take $\calg=\{\phi, \Omega\}$,  and take  $\caln$ as
\bea\label{defcaln}
\caln \yeq {\rm Int}(\Theta)\cap \big\{\alpha \in \bbR^\sfa ; |\alpha_i-\true_i| < 2^{-1}\true_i ~~(i \in \calj_1)\big\}.
\eea 
		Note that {$\caln$ satisfies (\ref{caln1}) and (\ref{caln2})} and that  for any $\alpha \in \ol\caln$, $\lambda_t^* >0$ implies $\lambda_t(\alpha)>0$ since $\lambda_t(\alpha) \geq 2^{-1}\lambda^*_t$.
		We have
		\beas
		\partial_\theta\calh_T(\true)&=&\int_{0}^T\frac{X_t}{\lambda_t^*}1_{\{\lambda_t^*>0\}}dN_t-\int_0^T X_t dt+\int_0^TX_t1_{\{\lambda^*_t=0\}}dt
		\\&=&\int_{0}^T\frac{X_t}{\lambda_t^*}1_{\{\lambda_t^*>0\}}d\tilde N_t.
		\eeas
		Similarly as before, from   [{\bf L2}],
		\beas
		\frac{1}{\sqrt T}\bigg|\int_{0}^T\frac{(X_t^j)_{j \in \calj_0}}{\lambda_t^*}1_{\{\lambda_t^*>0\}}d\tilde N_t\bigg| =O_P(1).
		\eeas
		Moreover, from [{\bf L2}] and the martingale central limit theorem, we have
		\beas
		S_T : =\frac{1}{\sqrt T}\int_{0}^T\frac{(X_t^j)_{j \in \calj_1}}{\lambda_t^*}1_{\{\lambda_t^*>0\}}d\tilde N_t \overset{d}{\to} \overline\Delta,
		\eeas
		{checking Lindeberg's condition as for any $a>0$,
			\beas
			E\sum_{t \leq T} \big(\Delta S_t\big)^21_{\{|\Delta S_t|>a\}} &\leq&a^{-1}E\sum_{t \leq T} \big|\Delta S_t\big|^3
			\\&\leq&a^{-1}E\bigg[\int_0^T \bigg|\frac{1}{\sqrt T}\frac{(X_t^j)_{j \in \calj_1}}{\lambda_t^*}1_{\{\lambda_t^*>0\}}\bigg|^3 dN_t\bigg]
			\\&=&a^{-1}T^{-\frac{1}{2}}E\bigg[\bigg|\frac{(X_0^j)_{j \in \calj_1}}{\lambda_0^*}\bigg|^3 \lambda^*_01_{\{\lambda_0^*>0\}}\bigg]  ~\to~ 0 \qquad(T\to \infty). 
			\eeas
		}
		Thus, the first half of the argument of [{\bf H5}] holds.
		
		Let us show the second half of the argument of [{\bf H5}]. That is, take any $R>0$ and we show	\bea\label{H5secondex2}
		&&\sup_{\substack{\alpha \in \ol\caln\\ |a_T^{-1}(\alpha-\true)|\leq R}}\bigg|a_T^\prime\partial^2_\theta\calh_T(\alpha)a_T+ \begin{pmatrix}
			\overline \Gamma & O\\
			O & O
		\end{pmatrix} \bigg|\overset{P}{\rightarrow}0.
	\end{eqnarray}
for $\partial_\theta^2\calh_T(\alpha)$ that satisfies the following equation:
{\beas
\partial_\theta^2\calh_T(\alpha)&=&-\int_{0}^T\frac{X_t^{\otimes2}}{\lambda_t(\alpha)^2}1_{\{\lambda^*>0\}}dN_t
-\int_0^T\frac{X_t^{\otimes2}}{M_L}\varphi_L(X_t)1_{\{\lambda^*_t=0\}}dt\nonumber\\
&=&-\int_{0}^T\frac{X_t^{\otimes2}}{\lambda_t(\alpha)^2}1_{\{\lambda^*>0\}}dN_t
-\int_0^T\frac{X_t^{\otimes2}}{M_L}\varphi_L(X_t)1_{\big\{X_t^i=0  ~(i\in\calj_1)\big\}}dt \\
&=&-\int_{0}^T\frac{X_t^{\otimes2}}{\lambda_t(\alpha)^2}1_{\{\lambda^*>0\}}dN_t
-\int_0^T\frac{\big\{(X_t^j1_{\{j \in \calj_0\}})_{ j=1, ..., \sfp}\big\}^{\otimes2}}{M_L}\varphi_L(X_t)1_{\big\{X_t^i=0 ~(i\in\calj_1)\big\}}dt\quad( \alpha \in \ol \caln). 
\eeas
Since $a_T$ is defined as (\ref{aTex2}) and $\frac{r}{q} >1$, we have
\beas
&&a_T^\prime\partial^2_\theta\calh_T(\alpha)a_T \\ &=&{a_T^\prime}\bigg(-\int_{0}^T\frac{X_t^{\otimes2}}{\lambda_t(\alpha)^2}1_{\{\lambda^*>0\}}dN_t\bigg){a_T}-\frac{1}{T^{\frac{r}{q} }}\int_0^T\frac{\big\{(X_t^j1_{\{j \in \calj_0\}})_{ j=1, ..., \sfp}\big\}^{\otimes2}}{M_L}\varphi_L(X_t)1_{\big\{X_t^i=0 ~(i\in\calj_1)\big\}}dt \\
&=&(\sqrt{T}a_T)^\prime\bigg(-\frac{1}{T}\int_{0}^T\frac{X_t^{\otimes2}}{\lambda_t(\alpha)^2}1_{\{\lambda^*>0\}}dN_t\bigg)(\sqrt Ta_T) + o(1).
\eeas
Thus,  for (\ref{H5secondex2}), it suffices to show
{\bea\label{letusshow}
	\sup_{\substack{\alpha \in \ol\caln\\ |a_T^{-1}(\alpha-\true)|\leq R}}\bigg|-\frac{1}{T}\int_{0}^T\frac{X_t^{\otimes2}}{\lambda_t(\alpha)^2}1_{\{\lambda^*>0\}}dN_t+\Gamma\bigg| \overset{P }{\to} 0,
	\eea }
where  $\Gamma$ is a $\sfa \times \sfa$ matrix defined  as
\beas
\Gamma \yeq  \int_{[0, \infty)^\sfa}\frac{x^{\otimes2}}{\ftrue\cdot x}1_{\{\ftrue \cdot x>0\}}\nu(dx).
\eeas
Take any $\epsilon>0$.  Then there exists some $T_0=T_0(R, \epsilon)$ such that  for any $T\geq T_0$ and any $\alpha \in \ol\caln$ with $|a_T^{-1}(\alpha-\true)|\leq R$,
\beas
(1- \epsilon)\lambda_t^*\leq \lambda_t(\alpha) \ \leq \lambda_t^* +\epsilon \sum_{j \in D}X_t^j ~~a.s.\qquad(t\geq 0).
\eeas
Therefore,  for any $T\geq T_0$ and any $\alpha \in \ol\caln$ with $|a_T^{-1}(\alpha-\true)|\leq R$, 
\beas
A_T [u^{\otimes2}]\leq \frac{1}{T}\int_{0}^T\frac{X_t^{\otimes2}}{\lambda_t(\alpha)^2}1_{\{\lambda^*>0\}}dN_t[u^{\otimes2}] \leq B_T[u^{\otimes 2}] \qquad(u \in \bbR^\sfa, |u|\leq 1),
\eeas
where 
\beas
A_T \yeq \frac{1}{T}\int_{0}^T\frac{X_t^{\otimes2}}{\big(\lambda_t^*+\epsilon\sum_{j \in D}X_t^j\big)^2}1_{\{\lambda^*>0\}}dN_t,\quad B_T \yeq \frac{1}{T}(1-\epsilon)^{-2} \int_{0}^T\frac{X_t^{\otimes2}}{(\lambda_t^*)^2}1_{\{\lambda^*>0\}}dN_t.
\eeas
for any $T>1$,  any $u \in \bbR^{\sfa}$ with $|u| \leq 1$ and any $\alpha \in \ol\caln$.
Since  from [{\bf L2}],
\beas
E\bigg[\bigg|\frac{1}{\sqrt T}\int_{0}^T\frac{X_t^{\otimes2}}{\big(\lambda_t^*+\epsilon\sum_{j \in D}X_t^j\big)^2}1_{\{\lambda^*>0\}}d\tilde N_t\bigg|^2\bigg]&\lesssim&E\bigg[\frac{1}{T}\int_{0}^T\frac{|X_t|^{4}}{\big(\lambda_t^*+\epsilon\sum_{j \in D}X_t^j\big)^4}\lambda^*_t 1_{\{\lambda^*>0\}}dt\bigg]\\
&\leq& E\bigg[\frac{1}{T}\int_{0}^T\frac{|X_t|^{4}}{(\lambda_t^*)^4}\lambda^*_t 1_{\{\lambda^*>0\}}dt\bigg]
\\&=& 	{\int_{[0, \infty)^\sfa}\frac{|x|^4}{\big(\ftrue\cdot x\big)^3} 1_{\{\ftrue\cdot x>0\}} \nu(dx)~<~ \infty}
\eeas
and since
\beas
\int_{[0, \infty)^\sfa}\frac{|x^{\otimes2}|}{\big(\ftrue \cdot x+\epsilon \sum_{j \in D}x_j\big)^2}\, \ftrue\cdot x\,\,1_{\{\ftrue\cdot x>0\}}\,\, \nu(dx) &\leq & \int_{[0, \infty)^\sfa}\frac{|x^{\otimes2}|}{\ftrue\cdot x}\,1_{\{\ftrue\cdot x>0\}}\,\, \nu(dx) < \infty,
\eeas
we have 
\beas
A_T &=&\frac{1}{T}\int_{0}^T\frac{X_t^{\otimes2}}{\big(\lambda_t^*+\epsilon\sum_{j \in D}X_t^j\big)^2}1_{\{\lambda^*>0\}}d\tilde N_t+\frac{1}{T}\int_{0}^T\frac{X_t^{\otimes2}}{\big(\lambda_t^*+\epsilon\sum_{j \in D}X_t^j\big)^2}\lambda_t^* 1_{\{\lambda^*_t>0\}}dt\\
&\overset{P}{\to}& \int_{[0, \infty)^\sfa}\frac{x^{\otimes2}}{\big(\ftrue\cdot x+\epsilon \sum_{j \in D}x_j\big)^2}\,\ftrue\cdot x\,1_{\{\ftrue\cdot x>0\}}\,\, \nu(dx) ~=:~ A(\epsilon).
\eeas
Similarly, 
\beas
B_T &=& (1-\epsilon)^{-2}\bigg\{\frac{1}{T}\int_{0}^T\frac{X_t^{\otimes2}}{(\lambda_t^*)^2}1_{\{\lambda^*_t>0\}}d\tilde N_t+\frac{1}{T}\int_{0}^T\frac{X_t^{\otimes2}}{\lambda_t^*}1_{\{\lambda^*_t>0\}}dt \bigg\}\\
&\overset{P}{\to}& (1-\epsilon)^{-2} \int_{[0, \infty)^\sfa}\frac{x^{\otimes2}}{\ftrue\cdot x}1_{\{\ftrue\cdot x>0\}}\,\nu(dx) ~=:~ B(\epsilon).
\eeas
Then for any $T\geq T_0$, 
\beas
&&\sup_{\substack{\alpha \in \ol\caln\\ |a_T^{-1}(\alpha-\true)|\leq R}}\sup_{u \in \bbR^{\sfa}, |u| \leq 1}\bigg|-\frac{1}{T}\int_{0}^T\frac{X_t^{\otimes2}}{\lambda_t(\alpha)^2}1_{\{\lambda^*>0\}}dN_t[u^{\otimes2}]+\Gamma[u^{\otimes2}]\bigg|\\
&\leq& \sup_{\substack{\alpha \in \ol\caln\\ |a_T^{-1}(\alpha-\true)|\leq R}}\sup_{u \in \bbR^{\sfa}, |u| \leq 1}\bigg(-\frac{1}{T}\int_{0}^T\frac{X_t^{\otimes2}}{\lambda_t(\alpha)^2}1_{\{\lambda^*>0\}}dN_t[u^{\otimes2}]+\Gamma[u^{\otimes2}]\bigg)^{+}\\
&&+\sup_{\substack{\alpha \in \ol\caln\\ |a_T^{-1}(\alpha-\true)|\leq R}}\sup_{u \in \bbR^{\sfa}, |u| \leq 1}\bigg(-\frac{1}{T}\int_{0}^T\frac{X_t^{\otimes2}}{\lambda_t(\alpha)^2}1_{\{\lambda^*>0\}}dN_t[u^{\otimes2}]+\Gamma[u^{\otimes2}]\bigg)^{-}
\\&\leq & \sup_{u \in \bbR^{\sfa}, |u| \leq 1}\bigg(-A_T[u^{\otimes2}]+\Gamma[u^{\otimes2}]\bigg)^{+}+\sup_{u \in \bbR^{\sfa}, |u| \leq 1}\bigg(-B_T[u^{\otimes2}]+\Gamma[u^{\otimes2}]\bigg)^{-}
\\ &\lesssim & \big|-A_T+\Gamma \big|+\big|-B_T+\Gamma\big|
~\overset{P}{\to}~  \big|-A(\epsilon)+\Gamma \big|+\big|-B(\epsilon)+\Gamma\big| \qquad(T\to\infty),
\eeas
where  $f^+:=f\vee0$ and $f^-:=f\wedge 0$  for any $\bbR$-valued function $f$.
Since $\lim_{\epsilon \to +0} A(\epsilon)= \lim_{\epsilon \to +0} B(\epsilon) = \Gamma$, we have
\beas
\sup_{\substack{\alpha \in \ol\caln\\ |a_T^{-1}(\alpha-\true)|\leq R}}\sup_{u \in \bbR^{\sfa}, |u| \leq 1} \bigg|-\frac{1}{T}\int_{0}^T\frac{X_t^{\otimes2}}{\lambda_t(\alpha)^2}1_{\{\lambda^*>0\}}dN_t[u^{\otimes2}]+\Gamma[u^{\otimes2}]\bigg| \overset{P }{\to} 0.
\eeas
Therefore,  (\ref{letusshow}) holds.
}
Thus,  the second half of the argument of [{\bf H5}] also holds. Then using Theorem \ref{thmY2},  we obtain (\ref{suffices1}).

Also, [{\bf S}] holds from Example \ref{exeasysparse}. Therefore, using Theorem \ref{Selection2}, we have 
\beas
\lim_{T\to \infty}P\big[(\widetilde \alpha_{k, T})_{k \in \calj_0}=0\big] =1.
\eeas
Since (\ref{suffices1}) holds and $\true_i \neq 0$ $(i \in \calj_1)$, we obtain (\ref{suffices2}). 

\end{proof}

\begin{proof}[Proof of Proposition \ref{linearprop}]
From the following Lemma \ref{lemmaPe}, 
\beas
\bigg\{\alpha \in \{\ftrue + {\rm Ker}(A)\}\cap[0, \infty)^{\sfa} ;  Pe(\alpha)= \underset{\tilde\alpha \in \{\ftrue + {\rm Ker}(A)\}\cap [0, \infty)^\sfa}\inf Pe(\tilde\alpha)\bigg\} &\subset& \{pr_E(\ftrue) ; E \in \cale\} \cap [0. \infty)^\sfa.
\eeas
Under  $[{\bf L1}]^{\#}$, the set on the right-hand side  has the unique minimizer $pr_{E_0}(\ftrue)$ of $Pe$ on the set itself.  Therefore, $pr_{E_0}(\ftrue)$  uniquely minimizes 
$Pe$ on $\{\ftrue + {\rm Ker}(A)\}\cap [0, \infty)^\sfa$. Since $pr_{E_0}(\ftrue) \in [0, M_\alpha)^\sfa$ under $[{\bf L1}]^{\#}$, $pr_{E_0}(\ftrue)$  also uniquely minimizes 
$Pe$ on $\{\ftrue + {\rm Ker}(A)\}\cap [0, M_\alpha]^\sfa$. Therefore, [{\bf L1}] holds and $\true =pr_{E_0}(\ftrue)$.
\end{proof}

\begin{lemma}\label{lemmaPe} For any $M \in  (0, \infty) \cup\{\infty\}$,
\bea\label{202212180716}
\bigg\{\alpha \in \{\ftrue + {\rm Ker}(A)\}\cap[0, M)^{\sfa} ;  Pe(\alpha)= \underset{\tilde\alpha \in \{\ftrue + {\rm Ker}(A)\}\cap [0, M)^\sfa}\inf Pe(\tilde\alpha)\bigg\}&\subset& \{pr_E(\ftrue) ; E \in \cale\}.
\eea
\end{lemma}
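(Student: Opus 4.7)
The plan is to exploit the strict concavity of $Pe(\alpha) = \sum_j \kappa_j |\alpha_j|^q$ (valid since $0<q<1$) along any line in $\ftrue + {\rm Ker}(A)$ that remains strictly interior to the positive orthant on the support of $\alpha$. The minimizers of a strictly concave function on a convex polyhedron must live at extreme points, and the extreme points of $\{\ftrue+{\rm Ker}(A)\}\cap[0,M)^\sfa$ are exactly the points $pr_E(\ftrue)$ for $E\in\cale$.

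First, fix a minimizer $\alpha$ and let $E_0 = \{j : \alpha_j > 0\}$ denote its support. The key claim is that $\langle e_j : j \in E_0\rangle \cap {\rm Ker}(A) = \{0\}$. Suppose for contradiction there exists a nonzero $v \in {\rm Ker}(A)$ with $v_j = 0$ for $j \notin E_0$. Then for all sufficiently small $|\epsilon|$, the perturbation $\alpha + \epsilon v$ lies in $\{\ftrue+{\rm Ker}(A)\}\cap[0,M)^\sfa$: membership in the affine subspace is immediate from $v \in {\rm Ker}(A)$, positivity holds because $\alpha_j > 0$ on $E_0$ and $v_j = 0$ off $E_0$, and the strict upper bound $M$ is preserved by continuity. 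Since each map $\epsilon \mapsto (\alpha_j + \epsilon v_j)^q$ with $\alpha_j > 0$ is strictly concave (as $q(q-1)<0$), and at least one coordinate in $E_0$ has $v_j \neq 0$, the function $\epsilon \mapsto Pe(\alpha+\epsilon v)$ is strictly concave on a neighborhood of $0$. A strictly concave function cannot attain a local minimum in the interior of an interval on which it is defined, contradicting the optimality of $\alpha$.

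Second, extend $E_0$ to an element $E\in\cale$ by a dimension-counting argument. Since $\langle e_j:j\in E_0\rangle\cap {\rm Ker}(A)=\{0\}$ and $\dim {\rm Ker}(A)=\sfa-\sfr$, the direct sum $\langle e_j:j\in E_0\rangle \oplus {\rm Ker}(A)$ has dimension $|E_0|+(\sfa-\sfr)\leq \sfa$. If this sum is a proper subspace, pick some $k\notin E_0$ with $e_k$ outside it and add $k$ to $E_0$; this preserves the trivial intersection with ${\rm Ker}(A)$ and strictly increases the dimension. Iterate until the direct sum fills $\bbR^\sfa$, yielding some $E\supset E_0$ with $E\in\cale$.

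Finally, since $\alpha$ is supported in $E_0\subset E$, one has $\alpha\in\langle e_j:j\in E\rangle$; combined with $\alpha\in\ftrue+{\rm Ker}(A)$ and the uniqueness of the decomposition relative to the direct sum $\langle e_j:j\in E\rangle\oplus{\rm Ker}(A)=\bbR^\sfa$, we conclude $\alpha=pr_E(\ftrue)$. Thus every minimizer lies in $\{pr_E(\ftrue):E\in\cale\}$, proving the inclusion $(\ref{202212180716})$. The main subtlety is verifying in Step 1 that the perturbation $\alpha+\epsilon v$ remains admissible for both signs of $\epsilon$ — this is where the strict inequality $\alpha_j<M$ (guaranteed by the half-open set $[0,M)^\sfa$) and the strict positivity $\alpha_j>0$ on $E_0$ are both essential.
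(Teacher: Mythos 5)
Your proof is correct and takes essentially the same route as the paper: both use the strict concavity of $t \mapsto t^q$ for $0<q<1$ to show that a two-sided perturbation along a kernel direction supported on the active coordinates would decrease $Pe$, forcing $\left<\{e_j\}_{j \in \mathrm{supp}(\alpha)}\right> \cap {\rm Ker}(A) = \{0\}$, and then extend the support to some $E\in\cale$ and identify $\alpha$ with $pr_E(\ftrue)$ via uniqueness of the decomposition. The only difference is that you spell out the dimension-counting extension of the support to an element of $\cale$, which the paper leaves implicit.
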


\begin{en-text}\begin{lemma}
\beas
\underset{\alpha \in \{y^* + {\rm Ker}(A)\}\cap [0, \infty)^\sfa}{\rm argmin} Pe(\alpha)\subset \{pr_E(y^*) ; E \in \cale\}.
\eeas
\end{lemma}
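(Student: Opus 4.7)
My plan is to show that any minimizer $\alpha^\sharp$ of $Pe$ on the feasible set $\{\ftrue+{\rm Ker}(A)\}\cap [0,M)^\sfa$ has a support $K\subset\{1,\ldots,\sfa\}$ small enough that $\langle\{e_j:j\in K\}\rangle\cap {\rm Ker}(A)=\{0\}$, and then to identify $\alpha^\sharp$ with $pr_E(\ftrue)$ for an $E\in\cale$ extending $K$. The mechanism that forces the sparsity is the strict concavity of $x\mapsto|x|^q$ on $(0,\infty)$ caused by the choice $q\in(0,1)$ for the Bridge-type penalty.

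First I would fix a minimizer $\alpha^\sharp$, set $K=\{j:\alpha^\sharp_j>0\}$, and argue by contradiction. Suppose there exists a nonzero $v\in{\rm Ker}(A)\cap\langle\{e_j:j\in K\}\rangle$. Because $v$ is supported on $K$ and the positive components of $\alpha^\sharp$ are bounded away from both $0$ and $M$ on the finite set $K$, for all sufficiently small $|t|$ the points $\alpha^\sharp\pm tv$ stay in $[0,M)^\sfa$; since $\pm tv\in{\rm Ker}(A)$ they also remain in $\ftrue+{\rm Ker}(A)$, hence in the feasible set. Now $t\mapsto Pe(\alpha^\sharp+tv)$ is strictly concave near $0$ because at least one $v_j$ with $j\in K$ is nonzero and the summand $\kappa_j(\alpha^\sharp_j+tv_j)^q$ is strictly concave across $t=0$. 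This yields $Pe(\alpha^\sharp+tv)+Pe(\alpha^\sharp-tv)<2Pe(\alpha^\sharp)$ for small $t>0$, so at least one of the two perturbations strictly decreases $Pe$, contradicting minimality. Hence $\langle\{e_j:j\in K\}\rangle\cap{\rm Ker}(A)=\{0\}$.

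Next I would extend $\{e_j:j\in K\}$ by a greedy procedure (at each stage, adding a basis vector $e_{j_0}$ that is not yet in $\langle\{e_j:j\in K\}\rangle+{\rm Ker}(A)$, until no such vector remains) to obtain an index set $E\supset K$ satisfying $\langle\{e_j:j\in E\}\rangle\oplus {\rm Ker}(A)=\bbR^\sfa$, i.e.\ $E\in\cale$. Since $\alpha^\sharp\in\langle\{e_j:j\in K\}\rangle\subset\langle\{e_j:j\in E\}\rangle$ and $\ftrue-\alpha^\sharp\in{\rm Ker}(A)$, uniqueness of the direct-sum decomposition forces $\alpha^\sharp=pr_E(\ftrue)$, establishing (\ref{202212180716}).

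The main obstacle is the strict-concavity argument of the second paragraph, where one has to carefully justify feasibility of the perturbations $\alpha^\sharp\pm tv$ and the strict concavity of the scalar map $t\mapsto Pe(\alpha^\sharp+tv)$ across $t=0$; this is exactly where the non-convex assumption $q<1$ is used in an essential way, since a convex or linear penalty would not exclude a nonzero $v\in{\rm Ker}(A)\cap\langle\{e_j:j\in K\}\rangle$. The remaining linear-algebra steps (greedy extension and uniqueness of decomposition) are routine.
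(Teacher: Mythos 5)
Your proposal is correct and follows essentially the same route as the paper's proof: both take the support of a minimizer, show by contradiction via strict concavity of the Bridge penalty along a line direction in ${\rm Ker}(A)$ (the paper by noting $g''(\lambda)<0$ so $g(0)$ cannot be a local minimum, you by the equivalent midpoint inequality $g(t)+g(-t)<2g(0)$) that the support's coordinate subspace meets ${\rm Ker}(A)$ trivially, and then extend the support to some $E\in\cale$ and use the direct-sum decomposition to identify the minimizer with $pr_E(\ftrue)$.
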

\end{en-text}
\begin{proof}
Take any $\alpha \in \{\ftrue + {\rm Ker}(A)\}\cap[0, M)^\sfa$ satisfying $\ds  Pe(\alpha)= \inf_{\tilde\alpha \in \{\ftrue + {\rm Ker}(A)\}\cap [0, M)^\sfa}Pe(\tilde\alpha)$.
Define $F$ as the set of all $j \in \{1, ..., \sfa\}$ with $\alpha_j\neq 0$. 
We prove  
\bea\label{lemcontra}
\left<\{e_j\}_{j \in F}\right> \cap {\rm Ker}A=\{0\}
\eea by contradiction. Suppose that $\left<\{e_j\}_{j \in F}\right> \cap {\rm Ker}A\neq \{0\}$. Then there exists $ (c_j)_{j \in F} \in \bbR^{|F|} \setminus\{0\}^{|F|}$ such that 
\beas
\sum_{j \in F}c_j e_j \in {\rm Ker}A.
\eeas
Take  $\underline \lambda<0$ and  $\ol \lambda>0$ such that for any $\lambda \in  (\underline \lambda, \ol \lambda )$,
\bea\label{final1}
\alpha -\lambda\sum_{j \in F}c_j e_j  \in [0, M)^\sfa.
\eea
Note that  for any $\lambda \in  (\underline \lambda, \ol \lambda )$,
\bea\label{fianl2}
\alpha -\lambda\sum_{j \in F}c_j e_j  \in \{\ftrue + {\rm Ker}(A)\}.
\eea
Define functions $f : (\underline \lambda, \ol \lambda) \to \bbR^\sfa$ and $g : (\underline \lambda, \ol \lambda) \to \bbR$ as
\beas
f(\lambda) \yeq   \alpha -\lambda\sum_{j \in F}c_j e_j, \quad g(\lambda)=Pe\big(f(\lambda)\big) \qquad(\underline \lambda< \lambda<\ol \lambda).
\eeas
Then $g(0)$ cannot be the local minimum of $g$ since $g^{\prime\prime}(\lambda)<0$ for any $\lambda \in (\underline \lambda, \ol \lambda)$.
Therefore, there exists some $\lambda_0 \in (\underline \lambda, \ol \lambda)$ such that
\beas
Pe(\alpha) \yeq g(0) ~>~ g(\lambda_0)\yeq Pe\big(f(\lambda_0)\big).
\eeas 
Also,  $f(\lambda_0) \in \{\ftrue + {\rm Ker}(A)\}\cap [0, M)^\sfa$ from (\ref{final1}) and (\ref{fianl2}).  This contradicts  the minimality of $\alpha$. Thus, (\ref{lemcontra}) holds.

From (\ref{lemcontra}), we can take some $E_1 \in \cale$ with $E_1 \supset F$. 
Then 
\beas
0&=&(\alpha -\ftrue)A \\
&=&\big(\alpha - pr_{E_1}(\ftrue)\big)A.
\eeas 
Since $\alpha \in \left<\{e_j\}_{j \in F}\right> \subset\left<\{e_j\}_{j \in E_1}\right> $ and $\left<\{e_j\}_{j \in E_1}\right> \cap {\rm Ker}A=0$, we have
\beas
\alpha \yeq pr_{E_1}(\ftrue).
\eeas
Therefore,  $\alpha \in \{pr_{E}(\ftrue) ; E \in \cale\}$. Thus, (\ref{202212180716}) holds. 

\end{proof}

{
\begin{lemma}\label{lemmafinal}
Assume $[{\bf L1}]$. Then  
\bea\label{12190155}
{\rm
	Ker}A\cap\left<\{e_j\}_{j \in \calj_1}\right> =\{0\}.
\eea
Moreover, if $[{\bf L2}]$ holds, then $\ol \Gamma$ is non-degenerate.
\end{lemma}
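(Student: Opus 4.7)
The plan is to prove both claims by contradiction, deducing the second from the first via the ergodic behavior of the covariate process $X$.

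For the first assertion, I will suppose there is a nonzero vector $v \in {\rm Ker}(A) \cap \langle\{e_j\}_{j \in \calj_1}\rangle$, and perturb $\true$ along $v$ to contradict the uniqueness asserted by $[{\bf L1}]$. Since $v_k = 0$ for $k \in \calj_0$ and $\true_i \in (0, M_\alpha)$ for $i \in \calj_1$ and $\true_k = 0$ for $k \in \calj_0$, the curve $\true + \lambda v$ remains in $[0, M_\alpha)^\sfa$ for $|\lambda|$ sufficiently small; because $v \in {\rm Ker}(A)$, it also remains in $\{\ftrue + {\rm Ker}(A)\}$, so it lies in the feasible set of $[{\bf L1}]$. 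Setting $g(\lambda) = Pe(\true + \lambda v)$, for small $|\lambda|$ one has the explicit form
\begin{eqnarray*}
g(\lambda) \yeq \sum_{i \in \calj_1} \kappa_i (\true_i + \lambda v_i)^q,
\end{eqnarray*}
and then
\begin{eqnarray*}
g''(\lambda) \yeq q(q-1)\sum_{i \in \calj_1} \kappa_i v_i^2 (\true_i + \lambda v_i)^{q-2} ~<~ 0
\end{eqnarray*}
since $q \in (0,1)$, $\kappa_i > 0$, and $v_i \neq 0$ for at least one $i \in \calj_1$. Strict concavity rules out an interior minimum at $\lambda = 0$, so some small $\lambda$ yields $Pe(\true + \lambda v) < Pe(\true)$, contradicting $[{\bf L1}]$. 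This establishes (\ref{12190155}).

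For the second assertion, I will reduce the non-degeneracy of $\overline{\Gamma}$ to part one through the factorization $X = A(X^D)^\prime$. Suppose there is $0 \neq u = (u_i)_{i \in \calj_1}$ with $\overline{\Gamma}[u^{\otimes 2}] = 0$, i.e.
\begin{eqnarray*}
\int_{[0,\infty)^\sfa} \frac{\bigl(\sum_{i \in \calj_1} u_i x_i\bigr)^2}{\sum_{j=1}^\sfa \ftrue_j x_j}\, 1_{\{\sum_j \ftrue_j x_j > 0\}}\, \nu(dx) \yeq 0.
\end{eqnarray*}
Since $\lambda^*(x) = \sum_j \ftrue_j x_j = \sum_{i \in \calj_1} \true_i x_i$ (as $\true \in \ftrue + {\rm Ker}(A)$ and $\true_k = 0$ for $k \in \calj_0$), and $\true_i > 0$, $x_i \geq 0$ for $i \in \calj_1$, the event $\{\lambda^*(x) = 0\}$ forces $x_i = 0$ for all $i \in \calj_1$; thus $\sum_{i \in \calj_1} u_i x_i = 0$ trivially on this set, and the display above then gives $\sum_{i \in \calj_1} u_i x_i = 0$ for $\nu$-a.e.\ $x$.

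Rewriting $\sum_{i \in \calj_1} u_i x_i = \sum_{j \in D} c_j x_j$ with $c_j := \sum_{i \in \calj_1} u_i A_{ij}$ (using (\ref{AandX})), this means $\bigl(\sum_{j \in D} c_j X_t^j\bigr)^2$ has zero $\nu$-expectation, so by the non-degeneracy (\ref{Gnonde2}) one gets $c_j = 0$ for every $j \in D$. Setting $w := \sum_{i \in \calj_1} u_i e_i \in \langle\{e_j\}_{j \in \calj_1}\rangle$, this exactly says $w A = 0$, i.e.\ $w \in {\rm Ker}(A) \cap \langle\{e_j\}_{j \in \calj_1}\rangle$. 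By part one, $w = 0$, and thus $u = 0$, a contradiction. The main obstacle is the first part; the second part is essentially a bookkeeping exercise once the factorization through $X^D$ and the behaviour on $\{\lambda^* = 0\}$ are identified.
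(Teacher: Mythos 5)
Your proof is correct. The second assertion is established by the same argument as in the paper: observe that $\{\lambda^*(x)=0\}$ forces $x_i=0$ for all $i\in\calj_1$ for $\nu$-a.e.\ $x$, deduce $\sum_{i\in\calj_1}u_ix_i=0$ $\nu$-a.e., factor through $A$ via (\ref{AandX}), invoke (\ref{Gnonde2}), and then appeal to (\ref{12190155}). For the first assertion you take a genuinely more streamlined route. The paper invokes Lemma \ref{lemmaPe} to conclude $\true=pr_E(\ftrue)$ for some $E\in\cale$, reads off $\calj_1\subset E$, and uses the definition of $\cale$ to get $\mathrm{Ker}\,A\cap\langle\{e_j\}_{j\in E}\rangle=\{0\}$. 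You instead perturb $\true$ directly along a hypothetical nonzero $v\in\mathrm{Ker}\,A\cap\langle\{e_j\}_{j\in\calj_1}\rangle$ and use strict concavity of the Bridge penalty ($g''<0$, which requires $q<1$ and $\kappa_i>0$) to contradict the uniqueness in $[{\bf L1}]$; your careful accounting of signs and ranges ($\true_i\in(0,M_\alpha)$ on $\calj_1$, $\true_k=v_k=0$ on $\calj_0$, $v\in\mathrm{Ker}\,A$) shows the perturbed point stays feasible for small $|\lambda|$. This is exactly the concavity mechanism that drives the paper's proof of Lemma \ref{lemmaPe}, but applied in place rather than routed through the characterization of all minimizers as projections $pr_E(\ftrue)$; the gain is a self-contained argument that needs neither $\cale$ nor $pr_E$, at the small cost of not obtaining the more general structural information that Lemma \ref{lemmaPe} provides (which the paper reuses in Proposition \ref{linearprop}).
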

\begin{proof}  
From Lemma \ref{lemmaPe},  under [{\bf L1}],  $\true \in \{pr_{E}(\ftrue) ; E \in \cale\}$. 
Therefore, there exists some $E \in \cale$ such that $\calj_1  \subset E$. Thus, 
${\rm
	Ker}A\cap\left<\{e_j\}_{j \in \calj_1}\right> \subset {\rm Ker}A\cap\left<\{e_j\}_{j \in E}\right>=\{0\}$, and (\ref{12190155}) holds.

{We show that  $\ol \Gamma$ is non-degenerate under [{\bf L2}]. Assume  
	$\ds \ol \Gamma [v^{\otimes 2}]=0$ for some $v \in \bbR^{|\calj_1|}$. Then 
	\bea\label{olGammadege}
	\big|(x_i)_{i \in \calj_1}\cdot v\big|\,1_{\big\{\sum_{j=1}^\sfa \ftrue_jx_j >0\big\}}\yeq 0\quad\nu\text{-}a.e.\,x  \in  [0, \infty)^{\sfa}.
	\eea}
Since $\lambda_t^* = \sum_{j=1}^\sfa \ftrue_j  X_t^j =\sum_{j=1}^\sfa \true_j  X_t^j$ $(t\geq 0)$ almost surely, we have
\beas
1_{\big\{\sum_{j=1}^\sfa \ftrue_jx_j =0\big\}} \yeq 1_{\big\{\sum_{j=1}^\sfa \true_jx_j =0\big\}} \yeq 1_{\{x_j=0 ~(j \in \calj_1)\}} \quad\nu\text{-}a.e.\,x  \in  [0, \infty)^{\sfa}. \eeas
{Therefore, from (\ref{olGammadege}),
	\beas
	(x_i)_{i \in \calj_1}\cdot v \yeq 0\quad\nu\text{-}a.e.\,x  \in  [0, \infty)^{\sfa}.
	\eeas
	Thus, 
	\beas
	0&=& \int_{[0, \infty)^\sfa}\big((x_i)_{i \in \calj_1}\big)^{\otimes2}\nu(dx)~[v^{\otimes 2}]
	\\&=&\int_{[0, \infty)^\sfa}\big\{\big(e_i \cdot x \big)_{i \in \calj_1}\big\}^{\otimes2} \nu(dx) ~[v^{\otimes 2}]
	\\&=&\int_{[0, \infty)^\sfa}\big\{\big(e_iA \cdot (x_j)_{j \in D}\big)_{i \in \calj_1}\big\}^{\otimes2} \nu(dx) ~[v^{\otimes 2}]\qquad\big(\because (\ref{AandX})\big)\\
	&=&(e_iA)_{i \in \calj_1} \int_{[0, \infty)^\sfa}\big((x_j)_{j \in D}\big)^{\otimes2}  \nu(dx)~\big((e_iA)_{i \in \calj_1}\big)^{\prime}~[v^{\otimes 2}].
	\eeas}

\noindent Now $\{e_iA\}_{i \in \calj_1}$ is linearly independent from (\ref{12190155}).
\begin{en-text}
	Therefore,  under [{\bf L2}], 
	\beas
	\overline \Gamma &=& \int_{[0, \infty)^\sfa}\frac{\big((x_i)_{i \in \calj_1}\big)^{\otimes2}}{\sum_{j=1}^\sfa \ftrue_j x_j}1_{\big\{\sum_{j=1}^\sfa \ftrue_jx_j >0\big\}}\nu(dx)
	\\&=& \int_{[0, \infty)^\sfa}\frac{\big((x_i)_{i \in \calj_1}\big)^{\otimes2}}{\sum_{j=1}^\sfa \ftrue_j x_j}\bigg(1_{\big\{\sum_{j=1}^\sfa \ftrue_jx_j >0\big\}}+1_{\big\{x_j=0 ~(j \in \calj_1)\big\}}\bigg)\nu(dx)
	\\&=& \int_{[0, \infty)^\sfa}\frac{\big((x_i)_{i \in \calj_1}\big)^{\otimes2}}{\sum_{j=1}^\sfa \ftrue_j x_j} \nu(dx)
	\\&=&\int_{[0, \infty)^\sfa}\frac{\big\{\big(e_iA \,(x_j)_{j \in D}\big)_{i \in \calj_1}\big\}^{\otimes2}}{\sum_{j=1}^\sfa \alpha^*_j x_j} \nu(dx) \qquad\big(\because (\ref{AandX})\big)\\
	&=&(e_iA)_{i \in \calj_1} \int_{[0, \infty)^\sfa}\frac{\big((x_j)_{j \in D}\big)^{\otimes2} }{\sum_{j=1}^\sfa \alpha^*_j x_j} \nu(dx)~\big((e_iA)_{i \in \calj_1}\big)^{\prime}.
	\eeas
	Now $\{e_iA\}_{i \in \calj_1}$ is linearly independent from (\ref{12190155}).
\end{en-text}
Therefore, from (\ref{Gnonde2}), we obtain  $v=0$,  which implies the non-degeneracy of $\ol\Gamma$.
\end{proof}
}
\begin{en-text}
\begin{lemma}\label{lemmaP2}Define $\Theta^\dagger$ as 
\beas
\Theta^\dagger=\{\alpha \in [0, \infty)^\sfa ; h(\alpha)=0\}. 
\eeas
Then any minimizer  of $Pe$ on $\Theta^\dagger$ belongs to the following set:
\beas
\{pr_E(y^*) ; E \in \cale_+\}.
\eeas

\end{lemma}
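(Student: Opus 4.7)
The plan is to derive Lemma \ref{lemmaP2} directly as a corollary of Lemma \ref{lemmaPe}; the bulk of the work is a translation between two presentations of the same affine constraint set, together with an observation about non-negativity.

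First I would rewrite $\Theta^\dagger$ in the form used in Lemma \ref{lemmaPe}. From (\ref{h2}) we have $h(\alpha) = (\alpha - \ftrue)A$, so
\[
\Theta^\dagger = \{\alpha \in [0, \infty)^\sfa : (\alpha - \ftrue)A = 0\} = \{\ftrue + {\rm Ker}(A)\} \cap [0, \infty)^\sfa.
\]
Since $y^*$ is defined (via $y^*_j = a_j^*$ for $j \in D$ and $y^*_j = 0$ for $j \in D^c$) so that $y^* A = \ftrue A$, the affine subspaces $\{y^* + {\rm Ker}(A)\}$ and $\{\ftrue + {\rm Ker}(A)\}$ coincide, and hence $\Theta^\dagger = \{y^* + {\rm Ker}(A)\} \cap [0, \infty)^\sfa$ as well.

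Next I would apply Lemma \ref{lemmaPe} with $M = \infty$: any minimizer $\alpha^\circ$ of $Pe$ on $\Theta^\dagger$ satisfies $\alpha^\circ \in \{pr_E(\ftrue) : E \in \cale\}$. Because $pr_E$ is the projection onto $\langle\{e_j\}_{j \in E}\rangle$ along ${\rm Ker}(A)$, its value depends on the input only modulo ${\rm Ker}(A)$, so $pr_E(\ftrue) = pr_E(y^*)$ for every $E \in \cale$, giving $\alpha^\circ \in \{pr_E(y^*) : E \in \cale\}$.

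Finally, $\alpha^\circ \in [0, \infty)^\sfa$ forces $pr_E(y^*) \in [0, \infty)^\sfa$ for the relevant $E$. Interpreting $\cale_+ := \{E \in \cale : pr_E(y^*) \in [0, \infty)^\sfa\}$ (the non-negativity restriction consistent with the use of $\cale$ in $[{\bf L1}]^\#$), we conclude $\alpha^\circ \in \{pr_E(y^*) : E \in \cale_+\}$. I do not anticipate a substantive obstacle here; the geometric heart of the argument — that a minimizer must sit at an intersection of coordinate hyperplanes transverse to ${\rm Ker}(A)$ — was already carried out in the proof of Lemma \ref{lemmaPe} by perturbing along directions in $\langle\{e_j\}_{j \in F}\rangle \cap {\rm Ker}(A)$ with $F = \{j : \alpha^\circ_j \neq 0\}$, using strict concavity of $\lambda \mapsto Pe(\alpha^\circ - \lambda \sum_{j \in F} c_j e_j)$.
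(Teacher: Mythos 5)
Your proof is correct, and it takes a genuinely different route from the paper's own argument. The paper proves Lemma \ref{lemmaP2} directly by contradiction: given a minimizer $\alpha^\dagger$, it extracts a nonempty $K^\dagger\subset F^\dagger:=\{j:\alpha^\dagger_j\neq0\}$, re-expresses $h$ and $Pe$ on the reduced coordinates, and argues that minimizing the reduced penalty forces some component of $\alpha^\dagger$ to vanish, contradicting the definition of $F^\dagger$. You instead observe that $\Theta^\dagger=\{y^*+{\rm Ker}(A)\}\cap[0,\infty)^\sfa=\{\ftrue+{\rm Ker}(A)\}\cap[0,\infty)^\sfa$ and simply invoke Lemma \ref{lemmaPe} with $M=\infty$, after which the conclusion follows from the facts that $pr_E$ is constant on ${\rm Ker}(A)$-cosets (so $pr_E(\ftrue)=pr_E(y^*)$) and that the minimizer's non-negativity pins down membership in $\cale_+$. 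Your reduction is cleaner and more modular: it isolates the strict-concavity perturbation argument inside Lemma \ref{lemmaPe} and reuses it rather than reproving a close variant with the $K^\dagger$ bookkeeping. Both routes hinge on the same geometric fact (a $Pe$-minimizer must lie on a coordinate subspace transverse to ${\rm Ker}(A)$), but yours does the translation work once and delegates.

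One caveat worth flagging explicitly in your write-up: the set $\cale_+$ is never defined in the surviving text, so your reading $\cale_+=\{E\in\cale:pr_E(y^*)\in[0,\infty)^\sfa\}$ is an inference. It is the only reading consistent with the paper's proof (which concludes ``$E_1\in\cale_+$'' precisely after showing $\alpha^\dagger=pr_{E_1}(y^*)\in[0,\infty)^\sfa$) and with the usage in $[{\bf L1}]^{\#}$, so it is safe, but you should state it as a definition rather than a recollection.
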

\begin{proof}
Let $\alpha^\dagger=(\alpha^\dagger_1, ..., \alpha^\dagger_\sfa) \in \Theta^\dagger$ be any minimizer of $Pe$ on $\Theta^\dagger$.  Define $F^\dagger$ be a set of all $j \in \{1, ..., \sfa\}$ with $\alpha^\dagger_j\neq 0$. Note that $F^\dagger$ is not empty since $h(0)=-(a_j^*)_{j \in D}\neq 0$ and therefore $0 \notin \Theta^\dagger$.  
We prove  
\bea\label{lemcontra}
\left<\{e_j\}_{j \in F^\dagger}\right> \cap {\rm Ker}A=0
\eea by contradiction. Suppose that $\left<\{e_j\}_{j \in F^\dagger}\right> \cap {\rm Ker}A\neq 0$. Then there exists a nonempty subset $K^\dagger$ of $F^\dagger$ such that 
\beas
\left<\{e_j\}_{j \in F^\dagger \setminus K^\dagger}\right> \cap {\rm Ker}A=0,\\
\left<\{e_j\}_{j \in K^\dagger}\right> \subset \left<\{e_j\}_{j \in F^\dagger \setminus K^\dagger}\right>\oplus {\rm Ker}A.
\eeas
Take $E^\dagger \in \cale$ with $E^\dagger \supset F^\dagger \setminus K^\dagger$. Then for any $\alpha \in [0, \infty)^\sfa$ with $\alpha_j=0$ $\big(j \in (F^\dagger)^c \big)$,
\beas
h(\alpha)&=&\big(\sum_{j \in F^\dagger}\alpha_je_j - y^*\big)A \\
&=&\bigg(\sum_{j \in F^\dagger\setminus K^\dagger}\big(\alpha_j+C_j(\alpha_i)_{i \in K^\dagger}\big)e_j - y^*\bigg)A \\
&=&\bigg(\sum_{j \in F^\dagger\setminus K^\dagger}\big(\alpha_j+C_j(\alpha_i)_{i \in K^\dagger}\big)e_j - pr_{E^\dagger}(y^*)\bigg)A.
\eeas 
Since $h(\alpha^\dagger)=0$ and $\left<\{e_j\}_{j \in E^\dagger}\right> \cap {\rm Ker}A=0$, 
\beas
\sum_{j \in F^\dagger\setminus K^\dagger}\big(\alpha^\dagger_j+C_j(\alpha_i^\dagger)_{i \in K^\dagger}\big)e_j - pr_{E^\dagger}(y^*)\yeq 0.
\eeas
Especially, for any $j \in E^\dagger\setminus(F^\dagger\setminus K^\dagger)$, the $j$-th component of $pr_{E^\dagger}(y^*)$ is equal to $0$. Thus, for any $\alpha \in [0, \infty)^\sfa$ with $\alpha_j=0$ $\big(j \in (F^\dagger)^c \big)$,
\beas
\alpha \in \Theta^\dagger~~&\Leftrightarrow&~~ h(\alpha)=0 \nonumber\\
~~&\Leftrightarrow&~~ \big(\alpha_j+C_j(\alpha_i)_{i \in K^\dagger}\big) - pr_{E^\dagger}(y^*)[e_j]=0 ~~~(j \in F^\dagger\setminus K^\dagger) \label{Theta2}.
\eeas
Therefore,  for any $\alpha \in \Theta^\dagger$ with $\alpha_j=0$ $\big(j \in (F^\dagger)^c \big)$,
\bea
Pe(\alpha^\dagger) &\leq& Pe(\alpha)\nonumber\\
&=&\sum_{j \in F^\dagger\setminus K^\dagger}\kappa_j\big| pr_{E^\dagger}(y^*)[e_j]-C_j(\alpha_i)_{i \in K^\dagger}\big|^q+\sum_{j \in K^\dagger}\kappa_j|\alpha_j|^q.\label{Pe2}
\eea
However, considering the minimizers of  the formula (\ref{Pe2}), we see that there exists some $j \in F^\dagger$ such that $\alpha^\dagger_j=0$. This contradicts the definition of $F^\dagger$. Thus, (\ref{lemcontra}) holds.

From (\ref{lemcontra}), we can take some $E_1 \in \cale$ with $E_1  \supset F^\dagger$. 
Then 
\beas
h(\alpha^\dagger)&=&\big(\sum_{j \in F^\dagger}\alpha_j^\dagger e_j - y^*\big)A \\
&=&\bigg(\sum_{j \in F^\dagger}\alpha_j^\dagger e_j  - y^*\bigg)A \\
&=&\bigg(\sum_{j \in F^\dagger}\alpha_j^\dagger e_j  - pr_{E_1}(y^*)\bigg)A.
\eeas 
Since $h(\alpha^\dagger)=0$ and $\left<\{e_j\}_{j \in E_1}\right> \cap {\rm Ker}A=0$, we have
\beas
\alpha^\dagger=\sum_{j \in F^\dagger}\alpha_j^\dagger e_j  =pr_{E_1}(y^*).
\eeas
Therefore, $E_1 \in \cale_+$. Thus, any minimizer of $Pe$ on $\Theta^\dagger$ belongs to $\{pr_{E}(y^*) ; E \in \cale_+\}$. 

\end{proof}
\end{en-text}

\bibliographystyle{spmpsci} 
\bibliography{bibtex-NonregularCondition}

\end{document}